\setlist[enumerate, 1]{label=(\roman*)}
\theoremstyle{plain}
\newtheorem{hyp}{Assumption}
\newtheorem{thm}{Theorem}[section]
\newtheorem{lemma}[thm]{Lemma}
\theoremstyle{definition}
\theoremstyle{remark}
\newtheorem{remark}{Remark}
\newcommand*{\sepline}{%
  \begin{center}
    \rule[1ex]{6cm}{.5pt}
  \end{center}
}
\DeclareMathOperator*{\argmin}{arg\,min\,}
\renewcommand{\mid}{\;\ifnum\currentgrouptype=16 \middle\fi|\;}
\newcommand*{\bv}{\mathbf{v}}
\newcommand*{\bV}{\mathbf{V}}
\newcommand*{\bw}{\mathbf{w}}
\newcommand*{\bW}{\mathbf{W}}
\newcommand*{\bx}{\mathbf{x}}
\newcommand*{\bY}{\mathbf{Y}}
\newcommand*{\bZ}{\mathbf{Z}}
\newcommand*{\mle}{\text{\tiny MLE}}
\newcommand*{\vem}{\text{\tiny VEM}}
\newcommand*{\btheta}{\boldsymbol{\theta}}
\newcommand*{\bmu}{\boldsymbol{\mu}}
\newcommand*{\bSigma}{\boldsymbol{\Sigma}}
\newcommand*{\bB}{\mathbf{B}}
\newcommand*{\bC}{\mathbf{C}}
\newcommand*{\bo}{\mathbf{o}}
\newcommand*{\bm}{\mathbf{m}}
\newcommand*{\bS}{\mathbf{S}}
\newcommand*{\BB}[1]{#1}
\newcommand*{\cp}[1]{\vcenter{\hbox{\scalebox{1.39}{\ensuremath#1}}}}
\begin{document}
\begin{frontmatter}

    \title{Importance sampling-based gradient method for dimension reduction in Poisson log-normal model}
    \runtitle{Importance sampling based gradient method for PLN-PCA}

    \begin{aug}
        \author[MIAPS]{\fnms{Bastien}~\snm{Batardière}\ead[label=e1]{bastien.batardiere@inrae.fr}\orcid{0009-0001-3960-7120}}
        \author[MIAPS]{\fnms{Julien}~\snm{Chiquet}\ead[label=e2]{julien.chiquet@inrae.fr}\orcid{0000-0002-3629-3429}},
        \author[MIAPS]{\fnms{Joon}~\snm{Kwon}\ead[label=e3]{joon.kwon@inrae.fr}\orcid{0000-0002-3464-9081}},
        \and
        \author[CEREMADE, MIAPS]{\fnms{Julien}~\snm{Stoehr}\ead[label=e4]{stoehr@ceremade.dauphine.fr}\orcid{0000-0002-7813-0185}}
        \address[MIAPS]{Université Paris-Saclay, AgroParisTech, INRAE, UMR MIA Paris-Saclay, 91120, Palaiseau, France\printead[presep={,\ }]{e1,e2,e3}}
        \address[CEREMADE]{Universit\'e Paris-Dauphine, Universit\'e PSL, CNRS, CEREMADE, 75016 Paris, France\printead[presep={,\ }]{e4}}
    \end{aug}

    \begin{abstract}
    High-dimensional count data poses significant challenges for statistical
    analysis, necessitating effective methods that also preserve
    explainability. We focus on a low rank constrained variant of the Poisson
    log-normal model, which relates the observed data to a latent low-dimensional
    multivariate Gaussian variable via a Poisson distribution. Variational
    inference methods have become a golden standard solution to infer such a
    model. While computationally efficient, they usually lack theoretical
    statistical properties with respect to the model. To address this issue we
    propose a projected stochastic gradient scheme that directly maximizes the
    log-likelihood. We prove the convergence of the proposed method when using
    importance sampling for estimating the gradient.
    Specifically, we achieve a convergence rate of
    \BB{$O(T^{\nicefrac{-1}{2}} + N^{-1})$,
     % or $O(N^{\nicefrac{1}{2}}T^{\nicefrac{-1}{2}} + N^{-1})$
     % when accounting for sampling complexity,
    }
      where $T$ denotes the
     number of iterations and $N$ represents the number of Monte Carlo samples. The latter
    follows from a novel descent lemma for non convex $L$-smooth objective
    functions, and random biased gradient estimate. We also demonstrate
    numerically the efficiency of our solution compared to its variational
    competitor. Our method not only scales with respect to the number of
    observed samples but also provides access to the desirable properties of
    the maximum likelihood estimator.
    \end{abstract}

    \begin{keyword}[class=MSC]
	\kwd[Primary ]{00X00}
	\kwd{00X00}
	\kwd[; secondary ]{00X00}
    \end{keyword}

    \begin{keyword}
    	\kwd{Dimension reduction}
	    \kwd{importance sampling}
        \kwd{multivariate count data}
        \kwd{Poisson log-normal model}
        \kwd{projected stochastic gradient descent}
    \end{keyword}

\end{frontmatter}

% INTRO ------------------------------------------------------------------------------------------
\section{Introduction}

%% General LVM
Multivariate count data are prevalent in a widening range of applications
such as ecology, genomics, microbiology, astronomy, and economy, just to name a few.
This ubiquity has prompted the development of numerous statistical models, as unlike
continuous multivariate distributions, a generic universal multivariate
distribution for count data does not exist \citep{IYA17}.
Most of the successful proposals are latent variable models belonging to the family of generalized
multivariate mixed models (GMMM).
The latter offers the strength of model-based approaches, enabling the incorporation of external covariates
and allowing the latent variables to be constrained in
various ways to perform a specific task --- regression, variable
selection, and dimension reduction --- while controlling the complexity of the model.
These strengths contribute to the unwavering popularity of these models in the
aforementioned fields of application, where both modeling and interpretability are
essential prerequisites.

%% References on important frameworks including PLNmodels
Significant milestones in the literature include a few generic frameworks initially developed through applications in ecology, where count and abundance tables have long been the norm.
The generalized linear latent variable models (GLLVM) of \cite{Niku2019} are instances of generalized multivariate mixed models with low-dimensional latent variables, where the distribution of observed responses usually belongs to the exponential family.
The Hierarchical Modeling of Species Communities (HMSC), presented in \cite{Ovaskainen2017}, also falls within the class of generalized linear latent variable models with additional layers in the modeling of the latent variables.
Multivariate Poisson Log-Normal models (PLN), as presented in \cite{PLNfrontier}, are yet another instance of the expansive family of generalized linear latent variable models. The latter confers the advantage of developing a generic and versatile framework capable of addressing various tasks, including dimension reduction, regression, clustering, discriminant analysis.

%% Estimation
These modeling frameworks encounter the usual inference issues inherent to
latent variable models.
Specifically, direct and exact likelihood maximization is
difficult since it requires evaluating an integral over a space
of the latent variable dimension.
Hence, direct numerical integration approaches \citep{aitchison1989} are limited to small-scale problems involving solely a few
variables.
Approaches based on Markov chain Monte Carlo (MCMC) techniques
can handle medium-size problems but are computationally expensive
\citep{Hui2016, Tikhonov2020}.
Alternatively, methods based on Laplace approximations exhibit greater computational efficiency but can potentially be inaccurate \citep{INLA}.
An indirect approach relies on the Expectation-Maximization (EM) algorithm, a well-established method for inference in incomplete data models since \cite{dempster1977}.
However, the M-step is practically intractable in GMMM, as it requires computing an expectation with respect to the distribution of the latent variable conditional on the observations.
MCMC techniques can obviously be used in such a setting \citep[\textit{e.g.},][]{karlis2005algorithm}, though displaying the same shortcomings.
More recently, the growing size of datasets and the porting of these methods to other fields of
application where the number of variables expands drastically, such as genomics,
have sparked interest in variational approaches \citep{HOW11, blei2017,
Hui2017,Niku2019} as they provide a good compromise between accuracy and computational
efficiency.

%% PLN variational
In the context of PLN models, which is the focus of this paper,
\cite{PLNfrontier} have extensively used this variational approach in conjunction with the EM algorithm and adapted it to several contexts, including dimension reduction, clustering, sparse covariance.
The implementation provided in \cite{PLNmodels} is efficient and can deal with problems with thousands of observations and hundreds --- even a couple of thousands --- of variables.
The resulting estimator can be shown to converge to the maximum of the surrogate likelihood function and enjoy asymptotical normality \citep{WeM19}.
However, these results pertain to the surrogate model and generally differ from the natural properties of an M-estimator associated with the likelihood \citep{vdV98}. In particular, while the maximum reached by the variational estimator seems at least empirically to coincide with the maximum likelihood estimator (MLE), there is no genuine estimator of the variance of the variational estimator that can be used to measure uncertainty properly:
although the bootstrap method or the jackknife estimator could be used to build an estimator of the variance of the estimate, the variational solution is marred by the lack of relevant statistical guarantees.
Consequently, the design of efficient algorithms that can directly maximize the likelihood and inherit the desirable properties of MLEs is still a key research issue for GMMM, particularly for PLN models.
Such an algorithm allows a more direct assessment of estimator uncertainty by means of asymptotic variance estimates.
It is in this spirit that \cite{stoehr2024} propose a variant of the Monte Carlo EM scheme that combines composite likelihood and importance sampling methods with a focus on applications in synecology.
While the approach benefits from the properties of the maximum composite likelihood estimator, it necessitates splitting the data into overlapping blocks containing a small number of variables.
The mimimum number of blocks required grows quadratically in the number of variables (or species).
As the computational complexity of their Monte Carlo EM increases linearly with the number of blocks,
the solution is primarily suited for problems involving a few dozen variables but does not scale up efficiently to larger problems from a computational perspective.

%% Our proposal
\paragraph*{Contributions}
This paper introduces a projected stochastic gradient descent (SGD) scheme based on self-normalized importance sampling to obtain gradient estimates for optimizing the marginal likelihood of the observed data in the Poisson log-normal model, subject to a rank constraint on the latent space.
This model, introduced by \cite{PLNPCA}, can be seen as a probabilistic version of Principal Component Analysis (PCA) with Poisson emission law, and its standard inference solution is a variational Expectation-Maximization (VEM) algorithm.
Estimating parameters according to the maximum likelihood principle with Monte Carlo simulations is a long-standing
idea for an unnormalized statistical model --- a class of challenging models due to their intractable partition function which is a highly multidimensional integral depending on the parameters.
For instance, Monte Carlo maximum likelihood estimation \citep{geyer1994} uses importance sampling to estimate the partition function while contrastive divergence \citep{hinton2002} estimates the gradient of the log partition function via Monte Carlo methods.
More recently, the noise-contrastive estimation \citep{gutmann2012} reformulates the initial problem to avoid estimating the partition function or its gradient.
Here, we rely on the fact that incomplete data models share similarities with the unnormalized models in that, under mild regularity conditions, the inference resumes to deal with an intractable integral, namely the score function for the observed likelihood \citep{louis82}.
We show that the PLN-PCA model falls within the set of incomplete data models for which the score function is written as an expected value with respect to the conditional distribution of the latent given the observed and can thereby be estimated by simulation methods.
The rank constraint ensures that the importance sampling estimator can handle problems with up to thousands of variables in the emission space, provided that the dimension of the latent space is controlled and limited to some tens.
The projection step onto a convex compact set specifically guarantees that the objective function is $L$-smooth.
We also show that it ensures a bounded mean squared error and bias for the gradient estimator.
Such properties are common in the literature \citep{ghadimi2013stochasticfirstzerothordermethods,mai2021convergence,scaman22a}.

Our major contribution is a novel convergence theorem for the gradient method presented.
To establish the result within the context of a self-normalized importance sampling estimator, we first present a general descent lemma applicable under minimal assumptions --- specifically, $L$-smoothness, and bounded bias and quadratic error for the gradient estimator.
To the best of our knowledge, it is the first result on projected stochastic gradient schemes for potentially both non convex objective functions and random biased gradient estimators.
Given $T$ iterations of our method and $N$ Monte Carlo draws, we obtain a theoretical rate of $O(T^{-\nicefrac{1}{2}} + N^{-1})$ for the gradient mapping norm.
This convergence rate aligns with those reported in the literature,
\BB{including results specific to gradients estimated via Monte Carlo methods
\citep{mohamed2020montecarlogradientestimation,mcbook}}, as well as others
observed in different contexts.
For a non-convex setting with an unbiased gradient estimator, \citet{ghadimi2013stochasticfirstzerothordermethods}
derive an $O(T^{-\nicefrac{1}{2}})$ convergence rate for the gradient's norm objective, while \citet{mai2021convergence} achieve the same rate for the gradient mapping norm but when a projection step is added.
In situations where the gradient estimator is biased, with a bound $b$ on the bias, \citet{biased_SGD} retrieve a rate of $O(T^{-\nicefrac{1}{2}} + b)$, but only when no projection step is performed.

The paper also includes an efficient implementation of our algorithm using \texttt{JAX} library \citep{jax}, and GPU computing.
As a by-product, we provide a \texttt{PyTorch} \citep{pytorch} version of the VEM solution, enabling the analysis of large-scale datasets with hundreds of thousands of observations and tens of thousands of variables.

%% Outline
\paragraph*{Outline}
The paper begins with an introduction to the standard multivariate PLN
model and its PCA version in Section \ref{sec:model}. We then present our stochastic gradient scheme with convergence guarantees in Section \ref{sec:grad-sto}.
In Section \ref{sec:is-proposal}, we propose a sequentially adapted Gaussian mixture distribution to serve
as a valid importance sampling proposal distribution within our algorithm.
Finally, Section~\ref{sec:experiments} details a simulation study on synthetic data and an application to genomic data, where we deal with the problem of dimension reduction and visualization of a transcriptomic single-cell dataset.
Technical details are postponed till Appendices \ref{sec:app-pln}--\ref{sec:app-proposal}.

% NOTATIONS ------------------------------------------------------------------------------------------
\section{Notations and conventions}
%% Euclidian space
Let $p$ and $q$ be positive integers. The vector space of all $p \times q$-matrices over a ring $\mathbb{A}$ is denoted by $\mathcal{M}_{p\times q}(\mathbb{A})$. The subset of all symmetric, positive and definite $p \times p$-matrices over $\mathbb{R}$ is denoted by $\mathcal{S}_{++}^p$.
We denote by $\langle \cdot, \cdot\rangle$ the scalar product on a real
$p$-space $\mathbb{R}^p$, and $\lVert \cdot \rVert$ its associated norm. The
matrix norm induced by $\lVert \cdot \rVert$ on $\mathcal{M}_{p\times
p}(\mathbb{R})$ is also denoted by $\lVert \cdot \rVert$. $\mathrm{Diag}(x)$ is
a diagonal matrix with diagonal equal to $x$ for $x$ a vector. When applied to
matrices or vectors, simple functions like $\log, \exp$ or square apply
element-wise. The vector full of zeros of size $p$ is denoted $\mathbf 0_p$.

%% Measures
We denote by $\mathbb{M}_1(\mathbb{R}^p)$ the set of probability measures on $\mathbb{R}^p$.
% and by $\mathbb{M}_1^+(\mathbb{R}^p)$ the subset of those that are positive almost everywhere.
Given a probability measure $\pi\in\mathbb{M}_1(\mathbb{R}^p)$, $\mathbb{M}_{\pi}$ is the set of probability measures that dominates $\pi$. The product measure $\prod_{i = 1}^n \pi$ on $\mathbb{R}^{d\times n}$ is denoted by $\pi^{\otimes n}$. We use the same notation to refer to a measure and its associated density, meaning that if $\pi$ is absolutely continuous with respect to the Lebesgue measure $\lambda$, $\pi(\mathrm{d} x) = \pi(x) \lambda(\mathrm{d} x)$. The expectation with respect to $\pi$ is denoted by $\mathbb{E}_{\pi}$ . When there is no ambiguity regarding the integration measure, we simply use the notation $\mathbb{E}$.

%% Standard distribution
We denote  by $\mathcal{N}\left(\bmu, \bS \right)$ a $p$-dimensional Gaussian variable with mean $\bmu \in \mathbb{R}^p$ and variance $\mathbf{S} \in \mathcal S_{++}^p$ and $\mathcal{N}(\bx; \bmu, \bS)$ its density evaluated at $\bx \in \mathbb{R}^p$. We denote by $\mathcal{P}(\lambda)$ a Poisson variable with rate $\lambda > 0$.

%% KL
The Kullback--Leibler divergence between $\pi \in \mathbb{M}_1(\mathbb{R}^p)$ and $\mu \in \mathbb{M}_\pi$ is defined by
\begin{equation*}
    \mathrm{KL}(\pi \parallel \mu) = \int \log \frac{\pi(x)}{\mu(x)}  \pi(\mathrm{d} x).
\end{equation*}
%% Entropy
The entropy of a random variable $X$ distributed according to $\pi \in \mathbb{M}_1(\mathbb{R}^p)$ is defined by
\begin{equation*}
\mathcal{H}_{\pi}(X) = -\int \log \pi(x) \pi(\mathrm{d} x).
\end{equation*}

%% L-smooth
Given $\mathcal{X} \subseteq \mathbb{R}^p$, a differentiable function $f:\mathbb R^p\rightarrow \mathbb R$ is said to be $L-$smooth on $\mathcal{X}$ with $L\geq 0$ if its gradient is $L-$Lipschitz on $\mathcal{X}$, namely, for any $\btheta, \btheta' \in \mathcal X$,
\begin{equation*}
\lVert  \nabla_{\btheta} f(\btheta) - \nabla_{\btheta} f (\btheta') \rVert \leq L \lVert \btheta- \btheta' \rVert.
\end{equation*}

% MODEL ------------------------------------------------------------------------------------------
\section{Dimension reduction in multivariate Poisson log-normal models}
\label{sec:model}

%% PLN
\paragraph*{Background: Multivariate Poisson log-normal model}
Consider a data matrix $\bY = (Y_{ij}) \in \mathcal{M}_{n\times p}(\mathbb{N})$ storing $n$ i.i.d. observations of a $p$-dimensional random vector $\bY_i= (Y_{i1}, \dots, Y_{ip})\in\mathbb{N}^p$, $i=1,\dots,n$. The multivariate Poisson lognormal model \citep[see][for its original formulation]{aitchison1989} relates each of the observed vector $\bY_i$ to a latent (or unobserved) $p$-dimensional Gaussian vector $\bZ_i$, whose covariance matrix $\bSigma$ describes the underlying structure of dependence between the $p$ variables.
Following a formalism similar to that of GMMM, the model can also encompass a main effect due to a linear combination of $d$ observed covariates $\mathbf{x}_i\in\mathbb{R}^d$ (including a vector of intercepts), and some possible user-specified offsets $\bo_i = (o_{ij}) \in\mathbb{R}^p$ to take into account the sampling efforts between the samples.
The model assumes that the observations $Y_{ij}$ are independent conditionnally
on $\bZ_i = (Z_{ij})$, and that the conditional distribution $p(Y_{ij} \mid
Z_{ij})$ is a Poisson distribution, namely,
\begin{equation}
\label{eqn:PLN}
    \begin{aligned}
        \{\bZ_i\}_{1 \leq i \leq n}  \text{ \BB{ind.}}:  & & \bZ_i & \sim \mathcal{N}(\BB{\bo_{i} + \bB^\top\bx_i }, \bSigma);
		\\
  	\{Y_{ij}\}_{\substack{1 \leq i \leq n \\ 1 \leq j \leq p}}  \text{ ind.} \mid \{\bZ_i\}_{1 \leq i \leq n}:  & & Y_{ij} \mid Z_{ij} & \sim \mathcal{P}\left(\exp(Z_{ij} )\right),
    \end{aligned}
\end{equation}
where $\bB = [\bB_1, \ldots, \bB_p]\in\mathcal{M}_{d\times p}(\mathbb{R})$ is a latent matrix of regression parameters. In this framework, the main goal is to estimate the vector of parameters $\btheta = (\bB, \bSigma) \in \mathbb{R}^D$, with $D = dp + p(p+1)/2$, from the data matrices $\mathbf{Y}$ and $\mathbf{X}$.
%Estimation of $\theta$ can be done efficiently by variational inference, as explained in \cite{PLNfrontier} and implemented in the package \texttt{PLNmodels} \citep{PLNmodels}.

%% PLN-PCA
\paragraph*{Poisson lognormal-model with low-rank constraint}
Throughout this paper, we focus on the PCA variant of model \eqref{eqn:PLN} as introduced in \cite{PLNPCA}.
The latter is derived by adding a rank constraint on the latent covariance
matrix, such that $\text{rank}(\bSigma) = q < p$. The constraint alleviates the
number of parameters to estimate, which can become prohibitive when the number
of variables $p$ is large in \eqref{eqn:PLN}. This key feature is particularly
relevant in the perspective of importance sampling. Indeed, this allows us to
deal with problems where the dimension of the observation space is potentially
large, in contrast with the number of parameters in the model and the dimension
of the latent space, where the particles are sampled.
The PCA version can be written in a hierarchical framework by adding a layer with, for each individual, a $q$-dimensional standard Gaussian vector $\bW_i$, and introducing an individual-specific linear function $f_{i} : \mathbb{R}^q \rightarrow \mathbb{R}^p$ defined for all $\bw\in\mathbb{R}^q$ as
\begin{equation*}
f_{i}(\bw;\bB, \bC) = \bC\bw + \bB^\top\bx_i + \bo_i,
\end{equation*}
where $\bC\in\mathcal{M}_{p\times q}(\mathbb{R})$ encodes the embedding of the observations into a space of lower dimension. The model is then written as
\begin{equation}
\label{eqn:PLNPCA}
    \begin{aligned}
    	\{\bW_i\}_{1 \leq i \leq n} \textit{ i.i.d.}:  & & \bW_i & \sim \mathcal{N}(\mathbf{0}_q, \mathbf{I}_q);
		\\
        \{\bZ_i\}_{1 \leq i \leq n}  \text{ \BB{ind.}}:  & & \bZ_i & = \bC\bW_i + \bB^\top\bx_i + \bo_i;
		\\
  	\{Y_{ij}\}_{\substack{1 \leq i \leq n \\ 1 \leq j \leq p}}  \text{ ind.} \mid \{\bZ_i\}_{1 \leq i \leq n}:  & & p\left(Y_{ij} \mid Z_{ij}\right) & = \frac{\exp\left\{Y_{ij}Z_{ij}  - \exp(Z_{ij})\right\}}{Y_{ij}!}.
    \end{aligned}
\end{equation}
We refer to Model \eqref{eqn:PLNPCA} as PLN-PCA.
Adopting the PCA terminology, $\mathbf{C}$ is the $p\times q$ matrix of loadings, and $\bW_i$ represents the vector of scores of the $i$-th observation in the low-dimensional latent subspace, whose dimension $q$ corresponds to the number of components.
%An equivalent way of writing this model, where the rank constraint on $\mathbf{\Sigma}$ is explicit, is
%\begin{equation}
%  \label{eqn:PLNrank-model}
%  \mathbf{Y}_i | \mathbf{Z}_i\sim^\text{iid}\mathcal{P}\left(\exp\left\{\mathbf{B}^\top\mathbf{x}_i + \mathbf{Z}_i\right\}\right), \quad \mathbf{Z}_i \sim^\text{iid}  \mathcal{N}({\boldsymbol 0}_p, \mathbf{\Sigma}), \quad \text{with } \mathbf{\Sigma} = \mathbf{C}\mathbf{C}^\top.
%\end{equation}

The vector of unknown parameters to be estimated is now $\btheta = (\bB, \bC) \in \mathbb{R}^d$, with $d=p(q+m)$ that is significantly smaller than $D$ when $q\ll p$, the typical case at play in a context of dimension reduction. The complete log-likelihood of Model \eqref{eqn:PLNPCA} can be written, up to additive constants with respect to model parameters, as
\begin{equation*}
\sum_{i = 1}^n \log p_{\btheta}(\bY_i, \bW_i) = \sum_{i=1}^n \left\lbrace
\langle \bY_i, \bZ_i \rangle - \sum_{j = 1}^p \exp(Z_{ij}) - \frac{1}{2} \lVert \bW_i \rVert^2
%- \frac{q}{2}\log(2\pi)
\right\rbrace.
\end{equation*}
% \begin{align*}
% \sum_{i = 1}^n \log p_{\btheta}(\bY_i, \bW_i)
% & = \sum_{i=1}^n \left\lbrace
% \sum_{j = 1}^p \log {p(Y_{ij} \mid Z_{ij})} +  \log \mathcal{N}(\bW_i\,;\, \mathbf{0}_q, \mathbf{I}_q)
% \right\rbrace
% \\
% & = \sum_{i=1}^n \left\lbrace
% \langle \bY_i, \bZ_i \rangle - \sum_{j = 1}^p \lbrace \exp(Z_{ij}) + \log(Y_{ij}!)\rbrace - \frac{1}{2} \lVert \bW_i \rVert^2 - \frac{q}{2}\log(2\pi)
% \right\rbrace.
% \end{align*}
Without any further assumption on $\bC$, remark that $\btheta$ is not identifiable since the distribution of $\mathbf{Z}_i$ is invariant when multiplying $\mathbf{C}$ by any orthogonal matrix.
However, since $\bY_i$ depends on $\bC$ solely through the covariance $\bSigma = \bC\bC^\top$, it is enough to have the identifiability for the model parametrized by $ (\mathbf{B}, \mathbf{\Sigma})$.
We will discuss this point further when introducing the gradient ascent algorithm.

% ESTIMATION ------------------------------------------------------------------------------------------
\section{Biased stochastic gradient descent}
\label{sec:grad-sto}

\subsection{Solution to maximum likelihood principle}
\label{sec:mle}

Estimation of $\btheta$ is achieved by maximizing the likelihood of the
observed data $p_{\btheta}(\bY)$, or equivalently by solving the optimization
problem
\begin{equation}
\label{eqn:objective}
\btheta^{\mle} = \argmin_{\btheta \in \mathcal{X}} \ell(\btheta),
    \quad
    \ell(\btheta) = -\frac{1}{n} \sum_{i = 1}^n \log p_{\btheta}(\bY_i),
\end{equation}
where $\ell$ is referred to as the loss function. Optimizing such a
function is not straightforward because the marginal
$p_{\btheta}(\bY_i)$ requires integrating out the latent variable
$\bW_i$. We refer to $\btheta^{\mle}$ as the maximum likelihood
estimator.

%% EM algorithm
\paragraph*{Reminder on the EM approach}
The Expectation-Maximization algorithm \citep{dempster1977} circumvent this issue by using a decomposition of the log-likelihood of the observed data $\bY_i$ into
\begin{align}
\label{eqn:EM}
\log p_{\btheta}(\bY_i) & =
\int_{\mathbb{R}^q} \log p_{\btheta}(\bY_i, \bw) p_{\btheta}(\mathrm{d}\bw \mid \bY_i)
-\int_{\mathbb{R}^q}\log p_{\btheta}(\bw \mid \bY_i) p_{\btheta}(\mathrm{d}\bw \mid \bY_i) \nonumber\\
& = \mathbb{E}\left[
\log p_{\btheta}(\bY_i, \bW_i) \mid \bY_i, \btheta
\right] + \mathcal{H}_{ p_{\btheta}(\cdot \mid \bY_i)}(\bW_i),
\end{align}
The algorithm proceeds by evaluating the conditional expectation of the
complete log-likelihood using the current
estimates $\btheta^{(t)}$ of the model parameters:
\begin{equation*}
	Q(\btheta \mid \btheta^{(t)}) =
	\sum_{i = 1}^n \mathbb{E}\left[\log p_{\btheta}(\bY_i, \bW_i) \mid \bY_i, \btheta^{(t)}\right].
\end{equation*}
By iteratively maximizing this quantity, the algorithm generates a sequence
that converges under suitable regularity conditions to the maximum likelihood
estimator $\btheta^\star$ \citep{wu1983, boyles1983}.
However, the conditional $p_\btheta(\bW_i \mid \bY_i)$ is intractable for the
PLN model and its PCA extension.
To address this challenge, variational inference approximates
$p_{\btheta}(\bW_i \mid \bY_i)$ with a surrogate distribution
$\phi_{\boldsymbol{\psi}}(\bV)$ from a parametric family $\mathcal{P}$.
For instance, \cite{CMR18,CMR19} resorted to multivariate Gaussian distributions $\mathcal{N}(\bm^{\vem}_i, \bS^{\vem}_i)$ with diagonal covariance matrix.
The Variational EM method alternates between updates of
the variational parameter $\boldsymbol{\psi}$ and the model parameter
$\btheta$, aiming to maximize a lower bound of the log-likelihood, defined as
\begin{equation*}
\log p_\btheta(\bY_i) - \mathrm{KL}\left[\phi_{\boldsymbol{\psi}}(\bW_i) \Vert p_\btheta(\bW_i \mid \bY_i)\right]
= \mathbb{E}_{\phi_{\boldsymbol{\psi}}}\left[\log p_{\btheta}(\bY_i, \bV)\right] + \mathcal{H}_{\phi_{\boldsymbol{\psi}}}(\bV).
\end{equation*}
At convergence, the variational solution is associated with
\begin{equation}
\label{eqn:variational-sol}
\phi_{\boldsymbol{\psi}} = \argmin_{\phi \in \mathcal{P}} \mathrm{KL}\left[\phi \Vert  p_{\btheta}(\cdot \mid \bY_i)\right].
\end{equation}

%% Stochastic gradient
\paragraph*{Stochastic gradient scheme}
In contrast to existing methods, we propose to address the optimization problem \eqref{eqn:objective} directly with an SGD scheme. Our approach leverages that the Louis principle applies to Model \eqref{eqn:PLNPCA}, as outlined below (see Appendix \ref{sec:app-pln} for a proof).

%------------------------------------------------------------------------------------------
%------------------------------------------------------------------------------------------
\begin{restatable}{prop}{propscore}
%\begin{prop}
\label{prop:score}
For all individual $i = 1, \ldots, n$, the incomplete log-likelihood $\btheta \mapsto \log p_{\btheta}(\bY_i)$ of Model \eqref{eqn:PLNPCA}
is twice continuously differentiable on $\mathbb{R}^d$ and its score function can be written as
\begin{equation}
\label{eqn:score-marginal}
s_i(\btheta)
= \int_{\mathbb{R}^q} \nabla_{\btheta}\log p_{\btheta}(\bY_i, \bw)
p_{\btheta}(\mathrm{d}\bw \mid \bY_i)
= \mathbb{E}\left[
\nabla_{\btheta}\log p_{\btheta}(\bY_i, \bW_i) \mid \bY_i, \btheta
\right].
\end{equation}
%\end{prop}
\end{restatable}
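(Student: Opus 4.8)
The plan is to obtain both the smoothness and the identity \eqref{eqn:score-marginal} from a single differentiation-under-the-integral-sign argument, exploiting two structural features of Model \eqref{eqn:PLNPCA}: the prior $\mathcal N(\mathbf 0_q,\mathbf I_q)$ on $\bW_i$ does not depend on $\btheta$, and the Poisson emission contributes a doubly-exponential factor $\exp(-\exp z_{ij})$ that dominates any polynomial growth produced by differentiation.

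First I would write the marginal density as
\begin{equation*}
p_{\btheta}(\bY_i)=\int_{\mathbb R^q} g_i(\btheta,\bw)\,\mathrm d\bw,\qquad
g_i(\btheta,\bw)=\mathcal N(\bw;\mathbf 0_q,\mathbf I_q)\prod_{j=1}^p\frac{\exp\{Y_{ij}z_{ij}(\btheta,\bw)-\exp z_{ij}(\btheta,\bw)\}}{Y_{ij}!},
\end{equation*}
where $z_{ij}(\btheta,\bw)=\bigl(f_i(\bw;\bB,\bC)\bigr)_j$ is affine in $(\btheta,\bw)$. One checks directly that $g_i=p_\btheta(\bY_i,\bw)$, that $g_i(\btheta,\bw)>0$ for all $(\btheta,\bw)$ (so $p_\btheta(\bY_i)>0$ and $\log p_\btheta(\bY_i)$ is well defined), and that for fixed $\bw$ the map $\btheta\mapsto g_i(\btheta,\bw)$ is $C^\infty$ on $\mathbb R^d$ as a composition of affine maps with $\exp$.

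The heart of the argument is a local domination bound. Fixing $\btheta_0\in\mathbb R^d$ and a bounded neighbourhood $\mathcal U$ of $\btheta_0$, the affine coefficients of $z_{ij}$ stay bounded over $\mathcal U$, so there are constants $a,b\ge 0$ with $z_{ij}(\btheta,\bw)\le a+b\lVert\bw\rVert$ for every $\btheta\in\mathcal U$. Two elementary facts then suffice: (i) $u\mapsto u^k\mathrm e^{-u}$ is bounded on $\mathbb R_+$ for each $k\in\mathbb N$, which tames each occurring power of $\exp z_{ij}$ against the factor $\exp(-\exp z_{ij})$ present in $g_i$; and (ii) $\bw\mapsto\lVert\bw\rVert^m\exp(c\lVert\bw\rVert)\,\mathcal N(\bw;\mathbf 0_q,\mathbf I_q)$ is Lebesgue-integrable on $\mathbb R^q$ for all $m\in\mathbb N$, $c\ge 0$, since the Gaussian tail beats any sub-exponential factor. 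Differentiating $g_i$ once or twice in $\btheta$ yields $g_i(\btheta,\bw)$ times a polynomial in the entries of $\bw$, of $\bx_i$ and of $\{\exp z_{ij}(\btheta,\bw)\}_j$ (because each $\partial_\btheta z_{ij}$ is a component of $\bw$ or of $\bx_i$, and $\partial^2_\btheta z_{ij}=0$); combining (i)--(ii) produces integrable $h_0,h_1,h_2:\mathbb R^q\to\mathbb R_+$ with $\lvert g_i(\btheta,\bw)\rvert\le h_0(\bw)$, $\lVert\nabla_\btheta g_i(\btheta,\bw)\rVert\le h_1(\bw)$ and $\lVert\nabla^2_\btheta g_i(\btheta,\bw)\rVert\le h_2(\bw)$ for all $\btheta\in\mathcal U$ and a.e.\ $\bw$.

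With this bound, the standard theorem on differentiation under the integral sign applies twice, so $\btheta\mapsto p_\btheta(\bY_i)$ and hence $\btheta\mapsto\log p_\btheta(\bY_i)$ are $C^2$ on $\mathbb R^d$, and
\begin{equation*}
s_i(\btheta)=\frac{\nabla_\btheta p_\btheta(\bY_i)}{p_\btheta(\bY_i)}
=\frac{1}{p_\btheta(\bY_i)}\int_{\mathbb R^q}\nabla_\btheta g_i(\btheta,\bw)\,\mathrm d\bw
=\int_{\mathbb R^q}\nabla_\btheta\log g_i(\btheta,\bw)\,\frac{g_i(\btheta,\bw)}{p_\btheta(\bY_i)}\,\mathrm d\bw.
\end{equation*}
To finish I would identify $g_i(\btheta,\bw)/p_\btheta(\bY_i)=p_\btheta(\bw\mid\bY_i)$ and observe that $\nabla_\btheta\log g_i(\btheta,\bw)=\nabla_\btheta\log p_\btheta(\bY_i,\bw)$ since the $\btheta$-free Gaussian factor disappears under $\nabla_\btheta$, which is exactly \eqref{eqn:score-marginal}. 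The only genuinely delicate step is the uniform-in-$\btheta$ domination of $\nabla^2_\btheta g_i$, and as indicated it reduces entirely to the bookkeeping facts (i)--(ii); the rest is routine.
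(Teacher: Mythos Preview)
Your proposal is correct and follows essentially the same route as the paper: differentiation under the integral sign justified by a local (on a bounded neighbourhood of~$\btheta$) domination bound, exploiting that the Gaussian prior on $\bW_i$ absorbs sub-exponential growth. The only technical difference is in how the envelope is built: the paper invokes its Lemma~\ref{lem:unif-bound} (a tangent-line bound $\exp(z)\ge \exp(z_0)+(z-z_0)\exp(z_0)$ giving $\log p_{\btheta}(\bY_i\mid\bW_i)\le K\lVert\bW_i\rVert+\kappa$) and then crudely bounds $\lvert Y_{ij}-\exp Z_{ij}\rvert\le(\lVert\bY_i\rVert+1)\exp\lvert Z_{ij}\rvert$, whereas you use the boundedness of $u\mapsto u^k e^{-u}$ on $\mathbb R_+$ to swallow each power of $\exp z_{ij}$ produced by differentiation directly into the factor $\exp(-\exp z_{ij})$ already sitting in $g_i$. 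Your device is arguably cleaner and makes the sub-exponential factor in your fact~(ii) unnecessary (after absorption only a polynomial in $\lVert\bw\rVert$ remains against the Gaussian), but both arguments land on the same integrable envelope and the same conclusion.
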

%------------------------------------------------------------------------------------------
%------------------------------------------------------------------------------------------

While the intractability of the conditional distribution renders exact calculation infeasible even for a small value of $q$, identity \eqref{eqn:score-marginal} is instrumental in estimating the score function with Monte Carlo methods and designing an SGD scheme. Given a learning rate $\gamma\in\mathbb{R}_{+}^*$ and an initial point $\btheta^{(1)}\in\mathbb{R}^d$, the SGD scheme
recursively defines a sequence $\{\btheta^{(t)}\}_{t\in\mathbb{N}^*}$ through
the equation
\begin{equation}
\label{eqn:sgd}
\btheta^{(t+1)} = \btheta^{(t)} - \gamma \widehat{g}^{(t)}, %t\in\mathbb{N}, \tag{SGD}
\end{equation}
where $ \widehat{g}^{(t)}$ is a possibly biased estimator of $\nabla_{\btheta}\ell(\btheta^{(t)})$.
Here we explore the opportunity of importance sampling methods \citep{kahn1949, kahn1951} to define $\widehat{g}^{(t)}$.
Indeed, the lack of closed-form for $p_{\btheta}(\bW_i\mid\bY_i)$ hinders Monte Carlo methods that rely on exact samples from it.
However, importance sampling overcomes this difficulty by changing the integration measure.
For any $\btheta\in\mathbb{R}^d$, it approximates $p_{\btheta}(\cdot \mid
\bY_i)$ with a random probability measure based on weighted samples from a
probability density function $\nu_i(\cdot\,;\btheta)$, possibly depending on
$\btheta$ and referred to as proposal distribution, such that
$p_{\btheta}(\cdot \mid \bY_i)$ is absolutely continuous with respect to
$\nu_i(\cdot\,;\btheta)$.
The importance sampling method is then based on the following identity
\begin{equation*}
s_i(\btheta) = \int_{\mathbb{R}^q} \frac{p_{\btheta}(\bv \mid \bY_i)}{\nu_i(\bv;\btheta)}\nabla_{\btheta}\log p_{\btheta}(\bY_i, \bv)
\nu_i(\mathrm{d}\bv;\btheta).
\end{equation*}
To circumvent the issue of evaluating the intractable distribution in the above, the method leverages a tractable non-normalized version of the conditional distribution, namely the joint distribution. Let us introduce the Radon-Nikodym derivative of $p_{\btheta}(\bY_i, \cdot)$ with respect to $\nu_i(\cdot\,;\btheta)$:
\begin{equation}
\label{eqn:rho}
\rho_{\btheta,i}(\bv) = \frac{p_{\btheta}(\bY_i, \bv)}{\nu_i(\bv;\btheta)},
\quad\bv\in\mathbb{R}^q.
\end{equation}
The score can be written as
\begin{equation*}
s_i(\btheta) =
\int_{\mathbb{R}^q} \rho_{\btheta, i}(\bv) \nabla_{\btheta}\log p_{\btheta}(\bY_i, \bv)
\nu_i(\mathrm{d}\bv;\btheta)
\left/
\int_{\mathbb{R}^q} \rho_{\btheta, i}(\bv) \nu_i(\mathrm{d}\bv;\btheta)
\right..
\end{equation*}
Thereby, given $N\in\mathbb{N}^*$ independent samples $\bv_{i,1}, \ldots,
\bv_{i, N}$ from $\nu_i(\cdot\,;\btheta)$, the self-normalized importance
sampling (SNIS) estimator of $s_i(\btheta)$ is
\begin{equation}
\label{eqn:is}
\widehat{s}_i^N(\btheta) = \sum_{r = 1}^N \rho_{\btheta,i}(\bv_{i, r}) \nabla_{\btheta} \log p_{\btheta}\left(\bY_i, \bv_{i, r}\right) \left/ \sum_{s = 1}^N \rho_{\btheta, i}(\bv_{i,s}) \right..
\end{equation}
A possible solution is then to define $\widehat{g}^{(t)}$ by averaging the estimator \eqref{eqn:is} across individuals.
In what follows, we rely on a mini-batch strategy where we use a single individual at a time.
Specifically, at iteration $t\in\mathbb{N}^*$, we draw an individual $i(t)$ uniformly in $\{1, \ldots, n\}$ and
the gradient estimator within the update Equation \eqref{eqn:sgd} is
\begin{equation}
\label{eqn:grad-hat}
\widehat{g}^{(t)} = -\widehat{s}^N_{i(t)}\cp(\btheta^{(t)}\cp).
\end{equation}

% CONVERGENCE ------------------------------------------------------------------------------------------
\subsection{Convergence guarantees}

\begin{algorithm2e}[t!]
\caption{Importance Sampling based Gradient Descent (ISGD)}
\label{algo:sgis}
\KwIn{initial point $\theta^{(1)}$,  learning rate $\gamma$, number of iterations $T$, number of Monte Carlo draws $N$.}
\KwOut{the sequence $\btheta^{(1)}, \dots, \btheta^{(T+1)}$}
\For{$t = 1$ \KwTo $T$}{
    Sample $i$ uniformly in $\{ 1, \dots, n\}$\;
    Sample $(\bv_{i, 1}, \ldots, \bv_{i, N})$ from $\nu_{i}^{\otimes N}\cp(\cdot\,;\btheta^{(t)}\cp)$\;
    Update $\btheta^{(t+1)} = P_{\Theta}\cp(\btheta^{(t)} - \gamma \widehat{g}^{(t)}\cp)$ with $\widehat{g}^{(t)} = -\widehat{s}^N_{i}\cp(\btheta^{(t)}\cp)$ as in Equation \eqref{eqn:is}\;
}
\end{algorithm2e}

The SNIS estimator \eqref{eqn:is} is strongly consistent, but exhibits bias for a fixed sample size $N$.
As shown in this section, controlling this bias is essential to ensure the convergence of our algorithm.
This requires imposing further constraints on the optimization problem.
In the remainder of the paper, let $\Theta \subset \mathbb{R}^d$ be a nonempty, compact, and convex set.
In place of the standard update \eqref{eqn:sgd}, we employ a projected SGD algorithm, which is defined by
\begin{equation}
\label{eqn:proj-sgd}
\btheta^{(t+1)} = P_{\Theta}\cp(\btheta^{(t)} - \gamma \widehat{g}^{(t)}\cp),
\end{equation}
where $P_{\Theta}$ stands for the orthogonal projection on $\Theta$. Algorithm \ref{algo:sgis} summarizes the overall scheme.
Formally, the projection step ensures that all iterates remain bounded, which in turn guarantees the $L$-smoothness and the bounded gradient conditions commonly assumed in the literature to establish convergence properties.

\paragraph*{Preliminary result for $L$-smooth functions}
In the following, we delve into the analysis of an SGD algorithm (i) for non-convex and constrained optimization with $L$-smooth objective, (ii) with a gradient estimator that is both biased and random. To the best of our knowledge, no established convergence theorem exists for these conditions. \citet[][Lemma 3.2]{mai2021convergence} achieve a similar result but for unbiased gradient estimator.
Let $F:\mathbb R^{d} \rightarrow \mathbb R$ be defined by
\begin{equation}
\label{eqn:F}
F(\btheta) = \ell(\btheta) + I_{\Theta}(\btheta),
\quad
I_{\Theta}(\btheta) =
\begin{cases}
    0 & \text{if } \btheta \in \Theta, \\
    \infty & \text{if } \btheta \notin \Theta.
\end{cases}
\end{equation}
For a real $\eta > 0$, its Moreau envelope is
\begin{equation*}
F_{\eta}(\btheta) = \inf_{\btheta' \in \mathbb{R}^d} \left\{F(\btheta') + \frac 1{2 \eta} \lVert \btheta-\btheta' \rVert ^2 \right\}.
\end{equation*}
The Moreau envelope is useful from an optimization perspective, as it has the same set of minimizers as $F$, while being differentiable, unlike $F$.
This characteristic is instrumental in analyzing the convergence of proximal gradient methods such as the one from Equation \eqref{eqn:proj-sgd}.
Our following result yields control over the Moreau envelope, providing we have a $L$-smooth
loss function and a gradient estimator $\widehat{g}^{(t)}$ with bounded mean squared error and bias (see Appendix \ref{sec:app-optim} for proof).

\begin{restatable}[Descent lemma]{lemma}{lemcontrol}
\label{lem:control}
Let consider the gradient scheme as defined by Equation \eqref{eqn:proj-sgd}.
Assume that
\begin{enumerate}
\item the function $\ell$ is $L$-smooth on $\Theta$, and denote $\Gamma = \sup_{\btheta \in \Theta} \lVert \nabla_{\btheta} \ell(\btheta) \rVert$;
%\label{lem:control:i)
\item for $t \in \mathbb{N}^*$, one has
\begin{align*}
\sigma^{(t)} & = \mathbb{E} \left[
\cp\lVert
\widehat{g}^{(t)} - \nabla_{\btheta} \ell\cp( \btheta^{(t)} \cp)
\cp\rVert^2
\mid \btheta^{(t)}
 \right] < \infty,
\\
\xi^{(t)} & = \left\lVert
\mathbb{E} \left[
\widehat{g}^{(t)}\mid \btheta^{(t)}
 \right]
- \nabla_{\btheta} \ell\cp( \btheta^{(t)} \cp)
 \right\rVert < \infty.
\end{align*}
%\label{lem:control:ii)
\end{enumerate}
Then, for any real constant $\eta \in \big(0, 1/\max\{2\Gamma + L, 2L\}\big]$,
\begin{equation}
\label{eq:lyapunov_biased}
\begin{split}
\mathbb{E} \left[
F_\eta\cp( \btheta^{(t+1)} \cp) \mid \btheta^{(t)}
 \right]
\leq F_{\eta}\cp( \btheta^{(t)} \cp)
&
- \frac{\gamma}{2} \cp\lVert \nabla F_{\eta}\cp( \btheta^{(t)} \cp) \cp\rVert^2
\\
&
+ \frac{\gamma + \gamma^2\Gamma}{\eta} \xi^{(t)}
+ \frac{\gamma^2}{2\eta} \cp( \sigma^{(t)} + \Gamma^2 \cp).
\end{split}
\end{equation}
\end{restatable}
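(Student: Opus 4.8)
The plan is to follow the route used for proximal methods on weakly convex functions --- monitoring the Moreau envelope $F_\eta$ along the iterates --- and to refine it so that it accommodates a gradient estimator that is \emph{both} random and biased. Write $\hat{\btheta}^{(t)} = \mathrm{prox}_{\eta F}(\btheta^{(t)})$ for the proximal point. Since $\ell$ is $L$-smooth on the convex set $\Theta$, the function $F$ is $L$-weakly convex, so for $\eta \leq 1/(2L) < 1/L$ the infimum defining $F_\eta(\btheta^{(t)})$ is attained at a unique $\hat{\btheta}^{(t)}\in\Theta$, and $F_\eta$ is differentiable with $\nabla F_\eta(\btheta^{(t)}) = \eta^{-1}(\btheta^{(t)} - \hat{\btheta}^{(t)})$; this is standard. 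Each iterate $\btheta^{(t)}$ belongs to $\Theta$ by the projection step.

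The core of the argument is two geometric estimates on $\hat{\btheta}^{(t)}$. Testing the first-order optimality condition of $\hat{\btheta}^{(t)}$ over $\Theta$ against the admissible point $z = \btheta^{(t)}$ gives $\langle\nabla\ell(\hat{\btheta}^{(t)}), \btheta^{(t)} - \hat{\btheta}^{(t)}\rangle \geq \eta^{-1}\lVert\btheta^{(t)} - \hat{\btheta}^{(t)}\rVert^2$; Cauchy--Schwarz then forces $\lVert\btheta^{(t)} - \hat{\btheta}^{(t)}\rVert \leq \eta\lVert\nabla\ell(\hat{\btheta}^{(t)})\rVert \leq \eta\Gamma$, that is $\lVert\nabla F_\eta(\btheta^{(t)})\rVert \leq \Gamma$. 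Adding and subtracting $\nabla\ell(\hat{\btheta}^{(t)})$ and using $L$-smoothness then yields $\langle\nabla\ell(\btheta^{(t)}), \btheta^{(t)} - \hat{\btheta}^{(t)}\rangle \geq (\eta^{-1} - L)\lVert\btheta^{(t)} - \hat{\btheta}^{(t)}\rVert^2 \geq \tfrac{1}{2\eta}\lVert\btheta^{(t)} - \hat{\btheta}^{(t)}\rVert^2$, equivalently $\langle\nabla\ell(\btheta^{(t)}), \nabla F_\eta(\btheta^{(t)})\rangle \geq \tfrac12\lVert\nabla F_\eta(\btheta^{(t)})\rVert^2$, where the second inequality is exactly where $\eta \leq 1/(2L)$ is used.

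For the one-step recursion, I would use $\hat{\btheta}^{(t)}$ as a competitor in the infimum defining $F_\eta(\btheta^{(t+1)})$, then invoke nonexpansiveness of $P_\Theta$ (noting $P_\Theta\hat{\btheta}^{(t)} = \hat{\btheta}^{(t)}$) and the update \eqref{eqn:proj-sgd}: expanding $\lVert\btheta^{(t+1)} - \hat{\btheta}^{(t)}\rVert^2 \leq \lVert\btheta^{(t)} - \hat{\btheta}^{(t)} - \gamma\widehat{g}^{(t)}\rVert^2$ and recombining the $\lVert\btheta^{(t)} - \hat{\btheta}^{(t)}\rVert^2$ part with $F(\hat{\btheta}^{(t)})$ into $F_\eta(\btheta^{(t)})$ leaves
\begin{equation*}
F_\eta(\btheta^{(t+1)}) \leq F_\eta(\btheta^{(t)}) - \frac{\gamma}{\eta}\langle\widehat{g}^{(t)}, \btheta^{(t)} - \hat{\btheta}^{(t)}\rangle + \frac{\gamma^2}{2\eta}\lVert\widehat{g}^{(t)}\rVert^2.
\end{equation*}
Taking $\mathbb{E}[\,\cdot\mid\btheta^{(t)}]$ and writing $\mathbb{E}[\widehat{g}^{(t)}\mid\btheta^{(t)}] = \nabla\ell(\btheta^{(t)}) + b^{(t)}$ with $\lVert b^{(t)}\rVert = \xi^{(t)}$, I would then bound the three pieces: the descent term by $-\tfrac{\gamma}{2}\lVert\nabla F_\eta(\btheta^{(t)})\rVert^2$ via the estimate above; the bias cross-term by $\tfrac{\gamma}{\eta}\xi^{(t)}\lVert\btheta^{(t)} - \hat{\btheta}^{(t)}\rVert \leq \gamma\Gamma\xi^{(t)} \leq \tfrac{\gamma}{\eta}\xi^{(t)}$, using $\lVert\btheta^{(t)} - \hat{\btheta}^{(t)}\rVert \leq \eta\Gamma$ and $\eta\Gamma \leq 1$ (the latter from $\eta \leq 1/(2\Gamma + L)$); and the quadratic term by expanding $\mathbb{E}[\lVert\widehat{g}^{(t)}\rVert^2\mid\btheta^{(t)}]$ around $\nabla\ell(\btheta^{(t)})$ and using $\lVert\nabla\ell(\btheta^{(t)})\rVert \leq \Gamma$ to get $\mathbb{E}[\lVert\widehat{g}^{(t)}\rVert^2\mid\btheta^{(t)}] \leq \sigma^{(t)} + 2\Gamma\xi^{(t)} + \Gamma^2$. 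Summing the three bounds gives \eqref{eq:lyapunov_biased}.

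The main obstacle is the first geometric estimate. The classical weakly convex analysis only needs the descent inequality --- a lower bound on $\langle\nabla\ell(\btheta^{(t)}), \btheta^{(t)} - \hat{\btheta}^{(t)}\rangle$ by a multiple of $\lVert\btheta^{(t)} - \hat{\btheta}^{(t)}\rVert^2$ --- whereas handling a \emph{biased} estimator additionally demands the a priori bound $\lVert\nabla F_\eta(\btheta^{(t)})\rVert \leq \Gamma$ so that the bias enters only linearly. That bound is precisely what the projection buys: stationarity of $\hat{\btheta}^{(t)}$ over the bounded convex set $\Theta$, together with $\btheta^{(t)}\in\Theta$, pins $\lVert\nabla F_\eta(\btheta^{(t)})\rVert$ below $\sup_{\Theta}\lVert\nabla\ell\rVert = \Gamma$; combined with the matching $2\Gamma\xi^{(t)}$ surfacing in $\mathbb{E}[\lVert\widehat{g}^{(t)}\rVert^2\mid\btheta^{(t)}]$, this produces the stated constants. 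The remaining steps are routine bookkeeping.
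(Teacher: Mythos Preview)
Your proof is correct and follows a genuinely different route from the paper's. The paper packages the two key geometric estimates into separate lemmas: one proving that $\btheta'\mapsto\ell(\btheta')+\tfrac{1}{2\eta}\lVert\btheta'-\btheta^{(t)}\rVert^2$ is $(\eta^{-1}-L)$-strongly convex (used to obtain the descent inequality $\langle\nabla\ell(\btheta^{(t)}),\btheta^{(t)}-\hat\btheta^{(t)}\rangle\geq\tfrac{1}{2\eta}\lVert\btheta^{(t)}-\hat\btheta^{(t)}\rVert^2$ through the $L$-smoothness sandwich), and another proving the cruder bound $\lVert\btheta^{(t)}-\hat\btheta^{(t)}\rVert\leq 1$ by a contradiction argument on the prox. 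You instead read both facts directly off the first-order optimality condition for $\hat\btheta^{(t)}$ over $\Theta$: testing at $z=\btheta^{(t)}$ gives $\langle\nabla\ell(\hat\btheta^{(t)}),\btheta^{(t)}-\hat\btheta^{(t)}\rangle\geq\eta^{-1}\lVert\btheta^{(t)}-\hat\btheta^{(t)}\rVert^2$, from which Cauchy--Schwarz immediately yields the sharper $\lVert\btheta^{(t)}-\hat\btheta^{(t)}\rVert\leq\eta\Gamma$, and a single $L$-Lipschitz step converts the gradient at $\hat\btheta^{(t)}$ into the gradient at $\btheta^{(t)}$ without ever invoking strong convexity. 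This is more elementary and gives a tighter prox-distance bound, which you then relax to match the stated constants. The paper's approach is more modular; yours is shorter and makes the role of the constraint $\eta\leq 1/(2\Gamma+L)$ more transparent (it is exactly what forces $\eta\Gamma\leq 1$ so that the bias enters as $\tfrac{\gamma}{\eta}\xi^{(t)}$). The one-step recursion, the treatment of the quadratic term, and the final bookkeeping are identical in both proofs.
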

Remark that $\Gamma$ is finite as the gradient is $L$-Lipschitz continuous on a bounded set. Consequently the interval $\big(0, 1/\max\{2\Gamma + L, 2L\}\big]$ is nonempty.

\paragraph*{Applicability to the PLN-PCA model}
We now demonstrate that the loss function for the PLN-PCA model and the importance sampling estimate \eqref{eqn:grad-hat} satisfy the assumptions from Lemma \ref{lem:control}.
\begin{restatable}{prop}{propsmoothness}
\label{prop:smoothness}
Under Model \eqref{eqn:PLNPCA}, for any nonempty compact subset $\Theta\subset\mathbb{R}^d$, we have that
\begin{enumerate}
\item for any individual $i = 1, \ldots, n$, there exists a real
\begin{equation*}
0 < \zeta_i = \inf_{\theta\in\Theta} p_{\btheta}(\bY_i);
\end{equation*}
\item there exists a real $L \geq 0$ such that the objective function $\ell$, as defined in \eqref{eqn:objective}, is $L$-smooth on $\Theta$.
\end{enumerate}
\end{restatable}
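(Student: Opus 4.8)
\textbf{Proof plan for Proposition \ref{prop:smoothness}.}

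The plan is to prove part (i) first, since it is a straightforward consequence of continuity and compactness, and then use it as the key ingredient to control the derivatives needed in part (ii). For part (i), I would argue that the map $\btheta \mapsto p_{\btheta}(\bY_i)$ is continuous on $\mathbb{R}^d$ (indeed it is twice continuously differentiable by Proposition \ref{prop:score}), and that it is strictly positive everywhere: writing $p_{\btheta}(\bY_i) = \int_{\mathbb{R}^q} p_{\btheta}(\bY_i, \bw)\,\mathrm{d}\bw$ with the integrand being a product of a strictly positive Poisson mass function and a strictly positive Gaussian density, the integral is strictly positive for every $\btheta$. A continuous, strictly positive function on the nonempty compact set $\Theta$ attains its infimum, so $\zeta_i = \inf_{\btheta \in \Theta} p_{\btheta}(\bY_i) > 0$.

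For part (ii), the goal is to bound $\lVert \nabla^2_{\btheta}\ell(\btheta)\rVert$ uniformly over $\Theta$, since a uniform bound $L$ on the Hessian operator norm over the convex set $\Theta$ gives $L$-smoothness. It suffices to bound the Hessian of each $\btheta \mapsto -\log p_{\btheta}(\bY_i)$ on $\Theta$ and take $L$ to be the worst bound over $i = 1, \ldots, n$. Using the Louis-type decomposition, $\nabla^2_{\btheta}\log p_{\btheta}(\bY_i)$ can be written in terms of the conditional expectation and covariance (under $p_{\btheta}(\cdot \mid \bY_i)$) of $\nabla_{\btheta}\log p_{\btheta}(\bY_i,\bW_i)$ and $\nabla^2_{\btheta}\log p_{\btheta}(\bY_i,\bW_i)$. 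So I would (a) compute the first and second derivatives of the complete log-likelihood $\log p_{\btheta}(\bY_i,\bw)$ explicitly in $\btheta = (\bB,\bC)$ — these are polynomial in the entries of $\bB,\bC$ composed with $\exp(Z_{ij})$ where $Z_{ij} = \bC_j^\top\bw + \bB_j^\top\bx_i + o_{ij}$ is affine in $(\bB,\bC)$ — and (b) bound the resulting conditional expectations. For the bounds in (b) one needs to control, for $\btheta \in \Theta$, quantities like $\mathbb{E}[\,\lVert\bW_i\rVert^k e^{Z_{ij}} \mid \bY_i,\btheta\,]$ and $\mathbb{E}[\,e^{2Z_{ij}}\mid\bY_i,\btheta\,]$. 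Here part (i) enters: such a conditional expectation equals $\frac{1}{p_{\btheta}(\bY_i)}\int (\cdot)\, p_{\btheta}(\bY_i,\bw)\,\mathrm{d}\bw \leq \frac{1}{\zeta_i}\int (\cdot)\,p_{\btheta}(\bY_i,\bw)\,\mathrm{d}\bw$, and the remaining integral is, up to the bounded affine change of variables, a Gaussian integral of a polynomial times $\exp(Y_{ij}Z_{ij} - \sum_j e^{Z_{ij}})$; since $\Theta$ is compact the coefficients $\bB,\bC$ live in a bounded range, so $Z_{ij}$ ranges over an affine image with bounded offset and one can bound $\exp(Y_{ij}Z_{ij} - \sum_j e^{Z_{ij}})$ by a fixed Gaussian-integrable envelope uniformly in $\btheta\in\Theta$ (the $-e^{Z_{ij}}$ term dominates any polynomial or linear growth in $\bw$). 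Putting these estimates together yields a finite bound on $\lVert\nabla^2_{\btheta}\log p_{\btheta}(\bY_i)\rVert$ uniform over $\btheta\in\Theta$, hence the desired $L$.

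The main obstacle is the bookkeeping in step (ii)(a)--(b): writing out the Hessian of the incomplete log-likelihood via the Louis identity and verifying that every term is a conditional moment of a polynomial-times-exponential functional of $\bW_i$ that admits a uniform-in-$\btheta$ Gaussian-dominating envelope on the compact set $\Theta$. The conceptual point is simple — compactness of $\Theta$ bounds all parameter-dependent coefficients, the strict positivity $\zeta_i > 0$ controls the normalizing constant from below, and the $-\exp(Z_{ij})$ term in the complete log-likelihood kills any growth in $\bw$ — but making the domination uniform over $\btheta\in\Theta$ and collecting the constants is where the care is required. I expect the detailed computation to be relegated to Appendix \ref{sec:app-pln}.
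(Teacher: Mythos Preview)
Your plan for part (i) matches the paper's proof exactly: continuity of $\btheta \mapsto p_{\btheta}(\bY_i)$ (from Proposition~\ref{prop:score}), strict positivity of the integrand, and compactness of $\Theta$.

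For part (ii), however, your route is substantially more elaborate than the paper's. The paper does not compute or bound the Hessian explicitly via the Louis identity and conditional moment estimates. Instead, it simply observes that Proposition~\ref{prop:score} already establishes that $\btheta \mapsto \log p_{\btheta}(\bY_i)$ is \emph{twice} continuously differentiable on all of $\mathbb{R}^d$; consequently $\nabla_{\btheta}\ell$ is $C^1$ on $\mathbb{R}^d$, its Jacobian is continuous and therefore bounded on the compact set $\Theta$ (or on any compact convex set containing it), and the mean value inequality gives the Lipschitz bound directly. That is the entire argument.

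Your approach is not wrong --- the conditional moments you describe are indeed finite and uniformly bounded on $\Theta$, and the domination argument you sketch is essentially the same one used inside the proof of Proposition~\ref{prop:score} to justify differentiating under the integral. But you are effectively re-proving the $C^2$ regularity that Proposition~\ref{prop:score} already delivers, and then extracting the Hessian bound by hand. The payoff of your approach would be an explicit (if messy) expression for $L$ in terms of $\zeta_i$, $\bY_i$, $\bx_i$, $\bo_i$ and $\sup_{\btheta\in\Theta}\lVert\btheta\rVert$; the paper's approach gives existence of $L$ in one line but no explicit constant. Since the statement only asserts existence, the paper's shortcut suffices.
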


Regarding Assumption $(ii)$ of Lemma \ref{lem:control}, we should note that both $\sigma^{(t)}$ and $\xi^{(t)}$ are driven by, respectively, the mean squared error and the bias of the importance sampling estimate \eqref{eqn:is}.
\citet[][Theorem 2.3]{agapiou2017} yield sufficient conditions to have control over the bias and error.
The latter led to the subsequent assumptions on the proposal distribution for our optimization problem.
\begin{hyp}
\label{hyp:proposal}
For all individual $i \in \{1, \ldots, n\}$, the proposal distribution $\nu_i$ is chosen such that
\begin{align}
\tag{A1.1}\label{eqn:A11}
\lambda_i & = \sup_{(\btheta, \bV)\in\Theta\times\mathbb{R}^q}
\rho_{\btheta,i}(\bV)< \infty,
\\
\tag{A1.2}\label{eqn:A12}
\beta_i & = \sup_{\btheta\in\Theta} \mathbb{E}_{\nu_i(\cdot\,;\btheta)}\left[
\left\lVert
\nabla_{\btheta} \log p_{\btheta}(\bY_i, \bV)
\right\rVert_1^4
\right]  < \infty.
\end{align}
\end{hyp}
Following the work of \cite{agapiou2017}, we obtain finite bound when integrating the mean squared error and the bias of the importance sampling estimate \eqref{eqn:is} with respect to the random mini-batch index $i(t)$ (see Appendix \ref{sec:app-convergence} for proof).
\begin{restatable}{prop}{propagapiou}
\label{prop:agapiou}
% \textcolor{orange}{
    Let $\{i(t)\}_{t\in\mathbb{N}^*}$ and $\{\btheta^{(t)}\}_{t\in\mathbb{N}^*}$ be the associated
random sequences generated by Algorithm \ref{algo:sgis}.
Under Model \eqref{eqn:PLNPCA}, if Assumption \ref{hyp:proposal} holds, then,
for all $t \in \mathbb N$,
% }
% Let $\{i(t)\}_{t\in\mathbb{N}^*}$ be a sequence of independent uniform draws in
% $\{1, \ldots, n\}$ and $\{\btheta^{(t)}\}_{t\in\mathbb{N}^*}$ the associated
% random sequence generated by Algorithm \ref{algo:sgis}.
% Under model \eqref{eqn:PLNPCA}, if Assumption \ref{hyp:proposal} holds, then,
% for any $t\in\mathbb{N}$
\begin{align*}
\overline{\sigma}_{\rm IS}^{(t)} & = \mathbb{E}\left[
\left\lVert
\widehat{s}^{N}_{i(t)}\cp( \btheta^{(t)} \cp) - \nabla \log p_{\btheta^{(t)}}\left(\bY_{i(t)}\right)
\right\rVert^2
\mid \btheta^{(t)}
\right] \leq
\frac{d}{N}M_{\sigma},
%\frac{d}{N}M_\sigma,
\\
\overline{\xi}_{\rm IS}^{(t)} & =
\mathbb{E} \left[
\left\lVert
\mathbb{E}\left[
\widehat{s}^{N}_{i(t)}\cp( \btheta^{(t)} \cp)  -  \nabla \log p_{\btheta^{(t)}}\left(\bY_{i(t)}\right)
\mid \btheta^{(t)}, i(t)
\right]
\right\rVert
\mid \btheta^{(t)}
\right]
 \leq \frac{\sqrt{d}}{N}M_\xi,
\end{align*}
where $M_\sigma$ and $M_\xi$ are two finite and positive constants given by
\begin{align*}
M_{\sigma} & =
\frac{12}{n}\sum_{i = 1}^n \frac{\lambda_i^2\sqrt{\beta}_i}{\zeta_i^2}
\left(
1 + \frac{250\lambda_i}{\zeta_i} + \frac{9\lambda_i^2}{\zeta_i^2}
\right),
\\
M_{\xi} & = \frac{4}{n}
\sum_{i = 1}^n
\frac{\lambda_i^2\beta_i^{1/4}}{\zeta_i^2}
\left\lbrace
2
+ \sqrt{3
\left(
1 + \frac{250\lambda_i}{\zeta_i} + \frac{9\lambda_i^2}{\zeta_i^2}
\right)
}
\right\rbrace,
\end{align*}
with $\lambda_i$ and $\beta_i$ as in Assumption \ref{hyp:proposal}.
\end{restatable}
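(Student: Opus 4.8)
\emph{Proof strategy.} The plan is to first integrate out the mini-batch index $i(t)$, reducing both quantities to per-individual statements that must hold uniformly over $\btheta\in\Theta$, and then to invoke the finite-sample self-normalized importance sampling error bounds of \citet{agapiou2017}, applied coordinate by coordinate.

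First I would condition on $\btheta^{(t)}$ and use that $i(t)$ is drawn uniformly in $\{1,\dots,n\}$ independently of the proposal samples, together with $\nabla\log p_{\btheta^{(t)}}(\bY_{i(t)}) = s_{i(t)}(\btheta^{(t)})$ from Proposition \ref{prop:score}, so that by the law of total expectation
\begin{equation*}
\overline{\sigma}^{(t)}_{\rm IS} = \frac1n\sum_{i=1}^n \mathbb{E}_{\nu_i^{\otimes N}(\cdot\,;\btheta^{(t)})}\!\left[\bigl\lVert \widehat{s}^N_i(\btheta^{(t)}) - s_i(\btheta^{(t)})\bigr\rVert^2\right],
\qquad
\overline{\xi}^{(t)}_{\rm IS} = \frac1n\sum_{i=1}^n \bigl\lVert b^N_i(\btheta^{(t)})\bigr\rVert,
\end{equation*}
where $b^N_i(\btheta) = \mathbb{E}_{\nu_i^{\otimes N}(\cdot\,;\btheta)}[\widehat{s}^N_i(\btheta)] - s_i(\btheta)$ is the SNIS bias for individual $i$. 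Since every iterate of Algorithm \ref{algo:sgis} lies in $\Theta$ almost surely, it then remains to prove, for each $i$ and each $\btheta\in\Theta$, that $\mathbb{E}_{\nu_i^{\otimes N}(\cdot\,;\btheta)}[\lVert\widehat{s}^N_i(\btheta) - s_i(\btheta)\rVert^2]\le (d/N)\,c_i^\sigma$ and $\lVert b^N_i(\btheta)\rVert\le(\sqrt d/N)\,c_i^\xi$, with $c_i^\sigma$ and $c_i^\xi$ the summands defining $n M_\sigma$ and $n M_\xi$; averaging over $i$ would then close the argument.

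For the per-individual bounds I would fix $i$ and $\btheta\in\Theta$, write $\pi = p_{\btheta}(\cdot\mid\bY_i)$, $\nu = \nu_i(\cdot\,;\btheta)$, $w = \rho_{\btheta,i}$, and note that $\mathbb{E}_\nu[w] = p_{\btheta}(\bY_i)\in[\zeta_i,\lambda_i]$ --- the lower bound from Proposition \ref{prop:smoothness}(i), the pointwise bound $w\le\lambda_i$ from \eqref{eqn:A11} --- so the governing condition number $\kappa_i := \mathbb{E}_\nu[w^2]/\mathbb{E}_\nu[w]^2$ satisfies $\kappa_i\le\lambda_i/\zeta_i$. For the $k$-th coordinate I would apply the self-normalized error estimates of \citet{agapiou2017}, in the form valid for $L^4(\nu)$ integrands, to $h_k(\bv) = [\nabla_{\btheta}\log p_{\btheta}(\bY_i,\bv)]_k$, which is dominated by $H := \lVert\nabla_{\btheta}\log p_{\btheta}(\bY_i,\cdot)\rVert_1$, so that $\mathbb{E}_\nu[H^2]\le\sqrt{\beta_i}$ and $\mathbb{E}_\nu[|H|]\le\beta_i^{1/4}$ by Jensen's inequality and \eqref{eqn:A12}. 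This produces an MSE bound of order $N^{-1}$ times a polynomial in $\kappa_i$ times $\mathbb{E}_\nu[w^2 h_k^2]/\mathbb{E}_\nu[w]^2\le\lambda_i^2\sqrt{\beta_i}/\zeta_i^2$, and a bias bound of order $N^{-1}$ times $|\mathrm{Cov}_\nu(wh_k,w)|/\mathbb{E}_\nu[w]^2\le 2\lambda_i^2\beta_i^{1/4}/\zeta_i^2$ plus a higher-order remainder driven by the square root of the MSE term. Substituting $\kappa_i\le\lambda_i/\zeta_i$ and carrying out the bookkeeping yields the explicit constants $12,250,9$ (for the MSE) and $4,2,3$ (for the bias). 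Summing the coordinate MSEs over $k=1,\dots,d$ --- each obeying the same bound since $|h_k|\le H$ --- gives the factor $d$ and $c_i^\sigma = 12\,\lambda_i^2\sqrt{\beta_i}\,\zeta_i^{-2}(1 + 250\,\lambda_i\zeta_i^{-1} + 9\,\lambda_i^2\zeta_i^{-2})$, while $\lVert b^N_i(\btheta)\rVert\le\sqrt d\,\max_k|b^N_{i,k}(\btheta)|$ gives the factor $\sqrt d$ and $c_i^\xi = 4\,\lambda_i^2\beta_i^{1/4}\zeta_i^{-2}\{2 + \sqrt{3(1 + 250\,\lambda_i\zeta_i^{-1} + 9\,\lambda_i^2\zeta_i^{-2})}\}$; both are finite by Assumption \ref{hyp:proposal} and Proposition \ref{prop:smoothness}(i), and $M_\sigma = \tfrac1n\sum_i c_i^\sigma$, $M_\xi = \tfrac1n\sum_i c_i^\xi$.

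The hard part will be the per-coordinate step: \citet{agapiou2017} phrase Theorem 2.3 for bounded test functions and with a second-moment (or almost-sure) control on the weights, whereas here $h_k$ is only $L^4(\nu_i)$ and $w = \rho_{\btheta,i}$ need not be bounded below. Controlling the self-normalizing denominator $\tfrac1N\sum_r w(\bv_{i,r})$ away from zero --- via Chebyshev's inequality on the event it falls below half its mean, combined with a Cauchy--Schwarz estimate against $\beta_i$ on that event --- and then tracking how the bounded-test-function constants inflate into the explicit polynomial in $\lambda_i/\zeta_i$ is where the genuine work lies; integrating out $i(t)$ and extracting the $d$ and $\sqrt d$ factors are routine.
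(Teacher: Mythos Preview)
Your overall structure---integrate out $i(t)$, apply \citet{agapiou2017} coordinate by coordinate, bound everything via $\lambda_i,\beta_i,\zeta_i$, then sum over $k$ to pick up the $d$ and $\sqrt{d}$ factors---is exactly the paper's route, and the per-individual reductions you write are identical to what the paper does.

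Where you go astray is the last paragraph. You anticipate having to redo part of the Agapiou analysis (Chebyshev on the denominator, Cauchy--Schwarz against $\beta_i$ on the small-denominator event) because you believe Theorem~2.3 there is stated only for bounded test functions. It is not: the version the paper invokes requires only that a certain constant
\[
C_{\mathrm{MSE},k}(\btheta)
= \frac{3}{p_{\btheta}(\bY_i)^2}\!\left\{ m_2[\phi_k\rho_{\btheta,i}]
+ \frac{9}{p_{\btheta}(\bY_i)^2}\sqrt{m_4[\rho_{\btheta,i}]\,\mathbb{E}_{\nu_i}[|\phi_k\rho_{\btheta,i}|^4]}
+ \frac{125}{p_{\btheta}(\bY_i)}\sqrt{m_6[\rho_{\btheta,i}]\,\mathbb{E}_{\nu_i}[|\phi_k|^4]}\right\}
\]
be finite, together with an analogous expression for the bias involving $m_2[\rho_{\btheta,i}]$ and $m_2[\overline{\phi_k}\rho_{\btheta,i}]$. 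All of these are finite here because $\rho_{\btheta,i}\le\lambda_i$ pointwise gives $m_r[\rho_{\btheta,i}]\le 2^r\lambda_i^r$ and $\mathbb{E}_{\nu_i}[|\phi_k\rho_{\btheta,i}|^4]\le\lambda_i^4\beta_i$, while \eqref{eqn:A12} with Jensen gives $\mathbb{E}_{\nu_i}[|\phi_k|^4]\le\beta_i$ and $\mathbb{E}_{\nu_i}[|\phi_k|^2]\le\sqrt{\beta_i}$. Plugging these in and replacing $p_{\btheta}(\bY_i)$ by $\zeta_i$ from Proposition~\ref{prop:smoothness}(i) is what produces the constants $12,250,9$ and $4,2,3$ directly---no separate control of the self-normalizing denominator is needed, since that is already baked into Agapiou's statement. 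So the ``hard part'' you flag is in fact a non-issue; the proof is purely bookkeeping once the right form of Theorem~2.3 is on the table.
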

As detailled in the proof of Theorem \ref{thm:convergence} in Appendix \ref{sec:app-convergence}, this result implies the conditions required on $\sigma^{(t)}$ and $\xi^{(t)}$, since there is a constant $A$ such that
\begin{equation*}
\sigma^{(t)}  \leq A\left(\overline{\sigma}_{\rm IS}^{(t)} + \overline{\xi}_{\rm IS}^{(t)} + 1\right),
\quad\text{and}\quad
\xi^{(t)}  \leq \overline{\xi}_{\rm IS}^{(t)}.
\end{equation*}

\paragraph*{Convergence of the gradient mapping}
As the local minimum of the loss function may lie outside the compact set $\Theta$, we cannot prove that the norm of $\nabla \ell(\theta)$ becomes arbitrarily small within Algorithm \ref{algo:sgis}.
Instead, in the context of gradient methods incorporating a projection step, the convergence rate is characterized in terms of the norm of the gradient mapping $G_{\eta}^{(t)}$ \citep{smoothmap} defined for any real $\eta > 0$ by
\begin{equation}
\label{eq:gradient-mapping}
G_{\eta}^{(t)} = \frac{\btheta^{(t)} - P_{\Theta}\cp(\btheta^{(t)} - \eta \nabla \ell\cp(\btheta^{(t)}\cp)\cp)}{\eta}.
\end{equation}
This mapping is a tailored gradient objective, specifically modified to handle the projection step, whose norm is equivalent to the gradient norm of the Moreau envelope (see Lemma \ref{lem:Moreau_mapping}).
Our next result shows that the norm of the gradient mapping for Algorithm \ref{algo:sgis} can be rendered arbitrarily small, provided we use a sufficiently large number of iterations $T$ and particles $N$.

\begin{restatable}{thm}{thmconvergence}
\label{thm:convergence}
Let $\btheta^{(1)}\in\Theta$ be an initial value and $\gamma_0\in\mathbb{R}_{+}^*$ a user-specified initial learning rate.
Under Model \eqref{eqn:PLNPCA}, if Assumption \ref{hyp:proposal} holds, then
for any $T\in\mathbb{N}^*$ and any real constant $\eta \in(0, 1/\max\{2\Gamma,
L\}]$, the sequence $\{\btheta^{(t)}\}_{1\leq t\leq T}$ defined by
Algorithm \ref{algo:sgis} with $\gamma = \gamma_0/\sqrt{T}$ satisfies
\begin{dmath}\label{eq:thm_conv}
\frac{1}{T}\sum_{t = 1}^T \mathbb{E}\left[ \cp\lVert G_{\eta}^{(t)} \cp\rVert^2 \right]
\leq
\frac{2\tau}{\gamma_0(L\eta + 1) \sqrt{T}}
\left(
\ell\cp( \btheta^{(1)} \cp)
- \ell(\btheta^{\mle})
+ \frac{\gamma_0^2(L\eta + 1)}{2\eta}
\left[
\Delta^2 + \Gamma^2 + \frac{d}{N} \left\lbrace M_{\sigma}  + \frac{2(\Delta + \Gamma)}{\sqrt{d}} M_{\xi} \right\rbrace
\right]
\right)
+ \frac{2\tau\sqrt{d}}{\eta N} M_{\xi},
\end{dmath}
with $L$ the smoothness constant of $\ell(\cdot)$, $\Gamma = \sup_{\btheta \in \Theta} \rVert \nabla \ell(\btheta)\rVert$, constants $M_\sigma$ and $M_{\xi}$ as defined in Proposition \ref{prop:agapiou}, and
\begin{equation*}
\begin{split}
\tau = \frac{(2L\eta + 1)^2}{L\eta+1} \left( 1 + \sqrt{\frac{L\eta}{L\eta + 1}}  \right)^2,
\Delta = \max_{i = 1, \ldots, n} \sup_{\btheta\in\Theta} \left\lVert
\nabla_{\btheta}\ell\left(\btheta\right) + \nabla_{\btheta}\log p_{\btheta}\left( \bY_{i}\right)
\right\rVert.
\end{split}
\end{equation*}
\end{restatable}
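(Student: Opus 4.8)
The plan is to combine the descent lemma (Lemma~\ref{lem:control}) with the moment bounds of Proposition~\ref{prop:agapiou}, telescope the resulting one‑step inequality over $t=1,\dots,T$, and finally convert the bound obtained on the Moreau‑envelope gradient into a bound on the gradient mapping via Lemma~\ref{lem:Moreau_mapping}. First I would check that Lemma~\ref{lem:control} applies along the trajectory of Algorithm~\ref{algo:sgis}: Proposition~\ref{prop:smoothness}(ii) provides the $L$‑smoothness of $\ell$ on the compact set $\Theta$ (so $\Gamma=\sup_{\btheta\in\Theta}\|\nabla_{\btheta}\ell(\btheta)\|<\infty$, the gradient being continuous on a compact), and it remains to control, conditionally on $\btheta^{(t)}$, the bias $\xi^{(t)}$ and the quadratic error $\sigma^{(t)}$ of $\widehat{g}^{(t)}=-\widehat{s}^{N}_{i(t)}(\btheta^{(t)})$. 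Since $i(t)$ is uniform on $\{1,\dots,n\}$ and $\nabla_{\btheta}\ell=-\tfrac1n\sum_i\nabla_{\btheta}\log p_{\btheta}(\bY_i)$, writing $\mathbb{E}[\widehat{g}^{(t)}\mid\btheta^{(t)}]=\mathbb{E}_{i(t)}\big[\mathbb{E}[\widehat{g}^{(t)}\mid\btheta^{(t)},i(t)]\big]$ and pushing the norm inside the expectation over $i(t)$ (Jensen) yields $\xi^{(t)}\le\overline{\xi}^{(t)}_{\mathrm{IS}}$. For $\sigma^{(t)}$ I would use the bias–variance split $\sigma^{(t)}=\mathrm{Var}(\widehat{g}^{(t)}\mid\btheta^{(t)})+(\xi^{(t)})^{2}$ together with the law of total variance across the minibatch index: the within‑$i(t)$ part is bounded by the importance‑sampling mean‑squared error $\overline{\sigma}^{(t)}_{\mathrm{IS}}$, while the between‑$i(t)$ part, after adding and subtracting $\nabla_{\btheta}\log p_{\btheta^{(t)}}(\bY_{i(t)})$ and using the triangle inequality, is at most $\Delta^{2}+2\Delta\,\overline{\xi}^{(t)}_{\mathrm{IS}}+\overline{\sigma}^{(t)}_{\mathrm{IS}}$ with $\Delta$ as in the statement. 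Proposition~\ref{prop:agapiou} then converts these into the uniform deterministic bounds $\xi^{(t)}\le\tfrac{\sqrt{d}}{N}M_{\xi}$ and $\sigma^{(t)}\le\Delta^{2}+\tfrac{d}{N}M_{\sigma}+\tfrac{2\Delta\sqrt{d}}{N}M_{\xi}+\tfrac{d}{N^{2}}M_{\xi}^{2}=:\sigma^{\star}$, so Assumption~(ii) of Lemma~\ref{lem:control} is satisfied.

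Next I would sum the one‑step inequality. Apply Lemma~\ref{lem:control} with $\gamma=\gamma_0/\sqrt{T}$ and the Moreau parameter of the statement — $\eta\le 1/\max\{2\Gamma,L\}$ being chosen so that, possibly after rescaling the envelope parameter, the admissibility condition $\eta\le 1/\max\{2\Gamma+L,2L\}$ of the descent lemma holds. Taking total expectations, invoking the tower property, and summing~\eqref{eq:lyapunov_biased} over $t=1,\dots,T$, the left‑hand side telescopes, and bounding $F_{\eta}(\btheta^{(1)})\le\ell(\btheta^{(1)})$ (take $\btheta'=\btheta^{(1)}\in\Theta$ in the defining infimum) and $F_{\eta}(\btheta^{(T+1)})\ge\inf_{\btheta'\in\Theta}\ell(\btheta')\ge\ell(\btheta^{\star})$ gives
\[
\frac{\gamma}{2}\sum_{t=1}^T\mathbb{E}\big[\|\nabla F_{\eta}(\btheta^{(t)})\|^{2}\big]\le \ell(\btheta^{(1)})-\ell(\btheta^{\star})+T\Big(\tfrac{\gamma+\gamma^{2}\Gamma}{\eta}\tfrac{\sqrt{d}}{N}M_{\xi}+\tfrac{\gamma^{2}}{2\eta}\big(\sigma^{\star}+\Gamma^{2}\big)\Big).
\]
Dividing by $\gamma T/2$ and substituting $\gamma=\gamma_0/\sqrt{T}$ then separates the contributions: every term acquires an extra factor $\gamma=\gamma_0/\sqrt{T}$ except the $\tfrac{\gamma}{\eta}\tfrac{\sqrt{d}}{N}M_{\xi}$ piece, which — carrying coefficient $\gamma$ rather than $\gamma^{2}$ in~\eqref{eq:lyapunov_biased} — survives the division and produces the residual $O(N^{-1})$ floor $\tfrac{2\sqrt{d}}{\eta N}M_{\xi}$, all remaining terms being $O(T^{-1/2})$ or $O(T^{-1/2}N^{-1})$. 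Expanding $\sigma^{\star}$ and regrouping reproduces the bracket $\Delta^{2}+\Gamma^{2}+\tfrac{d}{N}\{M_{\sigma}+\tfrac{2(\Delta+\Gamma)}{\sqrt{d}}M_{\xi}\}$, the $\Gamma$ inside it coming from the $\gamma^{2}\Gamma$ coefficient of $\xi^{(t)}$.

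Finally I would pass from the Moreau‑envelope gradient to the gradient mapping: Lemma~\ref{lem:Moreau_mapping} provides, thanks to $L$‑smoothness, a pointwise comparison between $\|G_{\eta}^{(t)}\|$ and $\|\nabla F_{\eta}(\btheta^{(t)})\|$ whose constant, once squared, is exactly $\tau$; applying it inside the average from the previous step and collecting the remaining constants yields~\eqref{eq:thm_conv}. The main obstacle is the accounting in the first step: the estimator is simultaneously biased \emph{and} random with two nested randomness layers — the minibatch draw $i(t)$ and the $N$ importance‑sampling particles — so one must carefully isolate the genuinely $O(1/N)$ importance‑sampling error from the $O(1)$ minibatch variance (the $\Delta$ term), and then match every resulting piece to $M_{\sigma},M_{\xi},\Delta,\Gamma$ so that the constant bookkeeping lands precisely on the bracket of~\eqref{eq:thm_conv}; a secondary subtlety is reconciling the admissible range of $\eta$ between Lemma~\ref{lem:control} and the statement through the comparison in Lemma~\ref{lem:Moreau_mapping}.
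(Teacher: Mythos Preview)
Your overall strategy matches the paper's, but there is a concrete misreading of Lemma~\ref{lem:Moreau_mapping} that, as written, prevents the last two steps from lining up. Lemma~\ref{lem:Moreau_mapping} does \emph{not} compare $\lVert G_\eta^{(t)}\rVert$ with $\lVert\nabla F_\eta(\btheta^{(t)})\rVert$; it compares $\lVert G_\eta^{(t)}\rVert$ with $\lVert\nabla F_{\eta/(L\eta+1)}(\btheta^{(t)})\rVert$, and the squared constant is $\tau/(L\eta+1)$, not $\tau$. The paper exploits this: the descent lemma is applied not with Moreau parameter $\eta$ but with $\eta' = \eta/(L\eta+1)$. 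This is the ``rescaling'' you allude to, and it serves two purposes at once. First, it resolves the admissibility issue cleanly, since $\eta \le 1/\max\{2\Gamma,L\}$ gives $\eta' \le 1/(L+\max\{2\Gamma,L\}) = 1/\max\{2\Gamma+L,2L\}$, exactly the range Lemma~\ref{lem:control} requires. Second, the factor $1/\eta' = (L\eta+1)/\eta$ that then appears in every penalty term of~\eqref{eq:lyapunov_biased} is precisely what combines with the $\tau/(L\eta+1)$ from Lemma~\ref{lem:Moreau_mapping} to produce the constants in~\eqref{eq:thm_conv}; the leading $\tfrac{2\tau}{\gamma_0(L\eta+1)\sqrt{T}}$ and the floor $\tfrac{2\tau\sqrt{d}}{\eta N}M_\xi$ only come out right because of this interplay. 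As you have written it, the telescoping controls $\tfrac{1}{T}\sum_t\mathbb{E}[\lVert\nabla F_\eta(\btheta^{(t)})\rVert^2]$, which is not the quantity Lemma~\ref{lem:Moreau_mapping} gives you access to.

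A minor remark on the first step: your law-of-total-variance decomposition of $\sigma^{(t)}$ is valid but slightly looser than the paper's. The paper uses the identity $\lVert a-c\rVert^2 = \lVert a-b\rVert^2 + \lVert c-b\rVert^2 - 2\langle a-b,\,c-b\rangle$ with $b=-\nabla_{\btheta}\log p_{\btheta^{(t)}}(\bY_{i(t)})$ to obtain directly $\sigma^{(t)} \le \overline{\sigma}_{\mathrm{IS}}^{(t)} + \Delta^2 + 2\Delta\,\overline{\xi}_{\mathrm{IS}}^{(t)}$, without the extra $(\xi^{(t)})^2$ or second $\overline{\sigma}_{\mathrm{IS}}^{(t)}$ that your route produces; this is what makes the bracket in~\eqref{eq:thm_conv} come out exactly as stated.
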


\begin{remark}
While the upper bound $1/\max\{2\Gamma,L\}$ is unknown, Algorithm \ref{algo:sgis} does not depend on the choice of $\eta$ in practice, and therefore neither $\Gamma$ nor $L$ need to be estimated.
\end{remark}

\begin{remark}
    \BB{We acheived a convergence rate of $O(T^{\nicefrac{-1}{2}} + N^{-1})$. The convergence rate so obtained does not account for the sampling cost of Monte Carlo draws. If ones want to take into account the computational cost, the convergence rate becomes $O(N^{\nicefrac{1}{2}}T^{-\nicefrac{1}{2}} + N^{-1})$.}
\end{remark}

% IS_PROPOSAL------------------------------------------------------------------------------------------
\section{Importance sampling proposal choice}
\label{sec:is-proposal}

In this section, we propose a specific choice of proposal distribution that satisfies Assumption \ref{hyp:proposal}, namely a mixture distribution.
Mixture distributions are often chosen for this task \citep[\textit{e.g.},][]{cappe2008} because of their flexibility as parametric models.
In what follows, we leverage the model structure and focus on two-component Gaussian mixture distributions.
Given two real constants $\alpha\in[0, 1]$ and $\delta > 0$, denote the two-component Gaussian mixture with mean $\bmu\in\mathbb{R}^q$ and covariance $\bS\in\mathcal{S}_{++}^q$ by
\begin{equation}
\label{eqn:mixt-dist}
\mathcal{GM}(\cdot;\bmu, \bS, \alpha, \delta)
= (1 - \alpha) \mathcal{N}(\cdot;\bmu, \bS) + \alpha \mathcal{N}(\cdot; \bmu, \delta \mathbf{I}_q).
\end{equation}
For each individual $i=1, \ldots, n$, the proposal distribution $\nu_i(\cdot\,;\btheta)$ is set to such a mixture, and $\bmu$ and $\bS$ are iteratively adapted according to the current estimate $\btheta^{(t)}$ (see Algorithm \ref{algo:sgis-adaptive}).
However, to ensure convergence, it is necessary to impose conditions on \eqref{eqn:mixt-dist} (see Appendix \ref{sec:app-proposal} for proof).
%---
\begin{restatable}{lemma}{lemmamixt}
\label{lemma:valid_mixt}
Let $\alpha\in(0, 1]$ and $\delta > 1$. If for any $i = 1, \ldots, n$,
$\btheta \mapsto \bmu_i(\btheta)\in\mathbb{R}^q$ and $\btheta \mapsto
\bS_i(\btheta)\in\mathcal{S}_{++}^q$ are continuous on $\Theta$, then the
proposal distribution defined by
\begin{equation}
\label{eqn:prop-mixt}
\nu_i(\cdot\,;\btheta) = \mathcal{GM}(\cdot;\bmu_i(\btheta), \bS_i(\btheta), \alpha, \delta)
\end{equation}
fulfils Assumption \ref{hyp:proposal}.
\end{restatable}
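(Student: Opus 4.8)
The plan is to verify the two requirements \eqref{eqn:A11} and \eqref{eqn:A12} of Assumption \ref{hyp:proposal} in turn, both resting on two structural features of Model \eqref{eqn:PLNPCA}. First, the joint density factorises as $p_{\btheta}(\bY_i,\bw)=p_{\btheta}(\bY_i\mid\bw)\,\mathcal{N}(\bw;\mathbf{0}_q,\mathbf{I}_q)$, and the Poisson conditional likelihood is uniformly bounded: from $\sup_{z\in\mathbb{R}}(yz-e^{z})=y\log y-y$ for $y>0$ (and $=0$ for $y=0$), one gets $p_{\btheta}(\bY_i\mid\bw)\le c_i:=\prod_{j=1}^{p}Y_{ij}^{Y_{ij}}e^{-Y_{ij}}/Y_{ij}!<\infty$ for every $\bw\in\mathbb{R}^q$ and every $\btheta\in\Theta$ (with $0^{0}:=1$). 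Second, since $\Theta$ is compact and $\bmu_i,\bS_i$ are continuous, the quantities $\sup_{\btheta\in\Theta}\lVert\bmu_i(\btheta)\rVert$ and $\sup_{\btheta\in\Theta}\lVert\bS_i(\btheta)\rVert$ are finite, and so are $\sup_{\btheta\in\Theta}\lVert\bC\rVert$ and $\sup_{\btheta\in\Theta}\lVert\bB\rVert$.

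For \eqref{eqn:A11}, I would bound the mixture from below by its inflated component: as $\alpha\in(0,1]$, $\nu_i(\bw;\btheta)\ge\alpha\,\mathcal{N}(\bw;\bmu_i(\btheta),\delta\mathbf{I}_q)$, so
\begin{equation*}
\rho_{\btheta,i}(\bw)\le\frac{c_i}{\alpha}\,\frac{\mathcal{N}(\bw;\mathbf{0}_q,\mathbf{I}_q)}{\mathcal{N}(\bw;\bmu_i(\btheta),\delta\mathbf{I}_q)}.
\end{equation*}
The exponent of this Gaussian ratio is $\tfrac12(\delta^{-1}-1)\lVert\bw\rVert^{2}-\delta^{-1}\langle\bw,\bmu_i(\btheta)\rangle+\tfrac{1}{2\delta}\lVert\bmu_i(\btheta)\rVert^{2}$, a strictly concave quadratic in $\bw$ because $\delta>1$; completing the square shows the ratio never exceeds $\delta^{q/2}\exp\!\big(\lVert\bmu_i(\btheta)\rVert^{2}/(2(\delta-1))\big)$. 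Taking the supremum over $(\btheta,\bw)\in\Theta\times\mathbb{R}^q$ and using finiteness of $\sup_{\btheta}\lVert\bmu_i(\btheta)\rVert$ gives $\lambda_i<\infty$. This step is exactly where $\delta>1$ (heavier Gaussian tail than the latent prior) and $\alpha>0$ (positive mass on that component) enter.

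For \eqref{eqn:A12}, I would start from the complete-model score: up to $\btheta$-independent constants, $\log p_{\btheta}(\bY_i,\bw)=\langle\bY_i,\bC\bw+\bB^{\top}\bx_i+\bo_i\rangle-\sum_{j=1}^{p}\exp\big((\bC\bw+\bB^{\top}\bx_i+\bo_i)_j\big)-\tfrac12\lVert\bw\rVert^{2}$, so each partial derivative with respect to an entry of $\bB$ or $\bC$ equals $(Y_{ij}-\exp(Z_{ij}))$ times an entry of $\bx_i$ or $\bw$, where $Z_{ij}=(\bC\bw+\bB^{\top}\bx_i+\bo_i)_j$. On the compact $\Theta$, $|Z_{ij}|\le a_i+b_i\lVert\bw\rVert$ for constants $a_i,b_i$ depending only on $\Theta,\bx_i,\bo_i$, whence $\lVert\nabla_{\btheta}\log p_{\btheta}(\bY_i,\bw)\rVert_1\le\kappa_i(1+\lVert\bw\rVert)\exp(b_i\lVert\bw\rVert)$ uniformly in $\btheta\in\Theta$. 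It then remains to integrate the fourth power of this bound against $\nu_i(\cdot\,;\btheta)=(1-\alpha)\mathcal{N}(\bmu_i(\btheta),\bS_i(\btheta))+\alpha\mathcal{N}(\bmu_i(\btheta),\delta\mathbf{I}_q)$: for a generic Gaussian $\mathcal{N}(\bmu,\bS)$, writing $\bV=\bmu+\bS^{1/2}\bZ$ with $\bZ\sim\mathcal{N}(\mathbf{0}_q,\mathbf{I}_q)$, using $\lVert\bV\rVert\le\lVert\bmu\rVert+\lVert\bS\rVert^{1/2}\lVert\bZ\rVert$, Cauchy--Schwarz, and the finiteness of every exponential moment of $\lVert\bZ\rVert$, the quantity $\mathbb{E}[(1+\lVert\bV\rVert)^{4}\exp(4b_i\lVert\bV\rVert)]$ is dominated by an explicit continuous function of $(\lVert\bmu\rVert,\lVert\bS\rVert)$. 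Applying this to both components and invoking once more the finiteness of $\sup_{\btheta}\lVert\bmu_i(\btheta)\rVert$ and $\sup_{\btheta}\lVert\bS_i(\btheta)\rVert$ yields $\beta_i<\infty$.

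The conceptual content is confined to the two structural observations of the first paragraph --- uniform boundedness of the Poisson conditional likelihood and the $\delta>1$ Gaussian tail comparison --- together with the role of the continuity hypotheses in upgrading pointwise finiteness to uniform-in-$\Theta$ suprema. I expect the only real obstacle to be bookkeeping in \eqref{eqn:A12}: tracking how $a_i,b_i,\kappa_i$ depend on $\sup_{\btheta\in\Theta}\lVert\bC\rVert$ and $\sup_{\btheta\in\Theta}\lVert\bB\rVert$ and checking that the Gaussian exponential-moment bounds are genuinely uniform over $\Theta$; no estimate more delicate than compactness and the standard Gaussian integrals is required.
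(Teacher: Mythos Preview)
Your proposal is correct and follows essentially the same architecture as the paper's proof: for \eqref{eqn:A11}, lower-bound the mixture by its $\alpha\,\mathcal{N}(\bmu_i(\btheta),\delta\mathbf{I}_q)$ component and exploit $\delta>1$ to make the resulting quadratic in $\lVert\bw\rVert$ concave; for \eqref{eqn:A12}, bound each partial derivative by a polynomial-times-exponential in $\lVert\bw\rVert$ and integrate against each Gaussian component, then invoke compactness of $\Theta$ and continuity of $\bmu_i,\bS_i$ to make the bounds uniform.

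There are two minor technical differences worth noting. For \eqref{eqn:A11}, you use the sharp uniform bound $p_{\btheta}(\bY_i\mid\bw)\le c_i$ coming from the Poisson mode, whereas the paper uses its Lemma~\ref{lem:unif-bound}, which only gives $\log p_{\btheta}(\bY_i\mid\bw)\le K_i^{\Theta}\lVert\bw\rVert+\kappa_i^{\Theta}$; your bound is cleaner and sidesteps that lemma entirely. For \eqref{eqn:A12}, the paper computes the Gaussian integrals via the exact exponential-tilting identity $\mathcal{N}(\bw;\mu,S)\exp(4\bC_j^{\top}\bw)=\mathcal{N}(\bw;\mu+4S\bC_j,S)\exp(4\bC_j^{\top}\mu+8\bC_j^{\top}S\bC_j)$, which yields explicit closed-form bounds; your route through $\lVert\bV\rVert\le\lVert\bmu\rVert+\lVert\bS\rVert^{1/2}\lVert\bZ\rVert$ and finiteness of $\mathbb{E}[\exp(c\lVert\bZ\rVert)]$ is coarser but equally valid and arguably more transparent about where uniformity in $\btheta$ comes from. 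Neither difference affects correctness.
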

%---
Constraining the parameter $\delta$ to live in $(1, +\infty)$ is sufficient to guarantee that the Radon--Nikodym derivative $\rho_{\btheta, i}$ is uniformly bounded with respect to $\btheta \in \Theta$.
However, this does not ensure an efficient gradient estimator in terms of error or bias.
The bias and the error are both related to the Kullback--Leibler divergence $\mathrm{KL}[p_{\btheta}(\cdot\mid\bY_i) \Vert \nu_i(\cdot\,;\btheta)]$ \citep{agapiou2017, CD18}.
Specifically, these results show that the estimator exhibits enhanced efficiency
for a fixed computational budget, as the Kullback--Leibler divergence decreases.
Adaptive importance sampling addresses the minimization of $\nu \mapsto
\mathrm{KL}[p_{\btheta}(\cdot\mid\bY_i) \Vert \nu]$ over a given class of
probability measure.
For instance, the Population Monte Carlo proposed by \cite{cappe2008} provides a solution when the proposal is a mixture distribution.
Nevertheless, implementing such methods within an SGD scheme can
be computationally intensive. Indeed, the target distribution of the adaptive
scheme changes at each iteration of the gradient scheme, necessitating a full
run of the adaptive method at each iteration.

\paragraph*{Practical implementation}
In the following, we present a simpler heuristic that is efficient for the class of problems presented in this paper, albeit not optimal in terms of the Kullback--Leibler divergence.
We consider using the mean and covariance of the conditional distribution $p_{\btheta}(\bW_i \mid \bY_i)$, namely
\begin{equation}\label{eq:moments}
\bmu_i(\btheta) = \mathbb{E} [\bW_i \mid \bY_i, \btheta],
\quad
\bS_i(\btheta) = \mathbb{V} [\bW_i \mid \bY_i, \btheta].
\end{equation}
Both functions are continuous on $\Theta$ (see Appendix \ref{sec:app-proposal}).
Moreover, the parameter $\alpha$ can be interpreted as a regularization parameter.
Indeed, in the limiting case $\alpha = 0$, the
proposal distribution as defined in
\eqref{eqn:prop-mixt} resumes to the optimal
Gaussian proposal distribution, that is
\begin{equation}
    \label{eq:proposal-opt}
\nu_i(\cdot\,;\btheta) = \argmin_{\nu\in\mathcal{F}} \mathrm{KL}[p_{\btheta}(\cdot\mid\bY_i) \Vert \nu],
\quad
\mathcal{F} = \left\{
\mathcal{N}(\bmu, \bS) \,;\, \bmu\in\mathbb{R}^q,\, \bS \in\mathcal{S}_{++}^{q}
\right\}.
\end{equation}
While the Radon--Nikodym derivative $\rho_{\btheta, i}$ with respect to such a proposal may not necessarily be bounded for any $\btheta \in \Theta$, it points out the effect of $\alpha$.
The mixture distribution \eqref{eqn:prop-mixt} balances a component that
informs on the intractable conditional distribution and a regularization or
defensive component that plays a similar role to that of \cite{cappe2008}.
Practically speaking, we should opt for a small value of $\alpha$ to improve
the efficiency of the importance sampling.

Both $\bmu_i(\btheta)$ and $\bS_i(\btheta)$ are unknown and must be estimated.
Obviously, we can use the importance sampling method, since we could simply recycle the particles simulated to estimate the gradient.
However, such a solution may lead to poorly conditioned and non-positive definite matrix estimates for $\bS_i(\btheta)$.
A more robust alternative can be achieved using the Hessian of the log-complete likelihood:
\begin{align}
    \bS_i^{H}(\btheta)  & = - \left[\nabla^2 _{\bw} \log p_{\btheta}\left(\bY_i, \bw\right)_{|\bw= \mathbf \bmu_i(\btheta)}\right]^{-1} \\
                        & = \left[\mathbf{I}_q +\mathbf{C}^{\top }\operatorname{Diag}[ \exp \lbrace f_{i}(\bmu_i(\btheta); \bB, \bC) \rbrace]\mathbf{C}  \right]^{-1}. \label{eqn:curv}
\end{align}
This alternative stems from the second order Taylor expansion of the complete
log-likelihood, and has been used in various contexts, such as posterior
approximation \citep{tierney1986accurate} and importance sampling \citep[Chapter 9]{mcbook}.
Interestingly, for a Gaussian distribution, the Taylor expansion exactly
relates the curvature of the scalar field at its mode to the variance, namely
$\bS_i^{H}(\btheta)$ corresponds to the variance.
In contrast to a Monte Carlo estimate of the covariance of the conditional distribution, it directly follows from Equation \eqref{eqn:curv} that the Hessian of the log-complete likelihood, and consequently its inverse, is definite positive. Moreover, Lemma \ref{lemma:valid_mixt} also applies to this choice of covariance matrix, since the function $\btheta
\mapsto \bS_i^{H}(\btheta)$ is continuous as a composition of continuous functions.

% The robust variance approximation has been widely used in Laplace
% approximations \citep{LA1993Breslow,LA2004Huber}.

%---
\begin{algorithm2e}[t!]
\caption{Adaptive Importance Sampling based Gradient Descent %(AISGD)
}
\label{algo:sgis-adaptive}
\KwIn{initial point $\theta^{(1)}$,  learning rate $\gamma$, number of iterations $T$, number of Monte Carlo draws $N$, mixture parameter $0<\alpha\leq 1$, parameter $\delta>0$.}
\KwOut{the sequence $\btheta^{(1)}, \dots, \btheta^{(T+1)}$}
\For{$t = 1$ \KwTo $T$}{
    Sample $i$ uniformly in $\{ 1, \dots, n\}$\;
    Compute the estimate $\widehat{\bmu}_i$ of $\bmu_i(\btheta^{(t)})$ as defined in Equation \eqref{eq:moments} %via SNIS
    \;
    Compute the estimate $\widehat{\bS}_i$ of $\bS_i(\btheta^{(t)})$ as defined in Equations \eqref{eq:moments} or \eqref{eqn:curv}
%    via
%    $\begin{cases} \text{SNIS} &\text{(AISGD)} \\ \text{Robust covariance as in
%        Equation \ref{eq:curvature}}&\text{(Robust-AISGD)}
%    \end{cases}$
    \;
    Set $\nu_i = \mathcal{GM}(\cdot;\widehat{\bmu}_i, \widehat{\bS}_i, \alpha, \delta)$\;
    Sample $(\bv_{i, 1}, \ldots, \bv_{i, N})$ from $\nu_{i}^{\otimes N}\cp(\cdot\,;\btheta^{(t)}\cp)$\;
    Compute $\btheta^{(t+1)} = P_{\Theta}\cp(\btheta^{(t)} - \gamma \widehat{g}^{(t)}\cp)$ with $\widehat{g}^{(t)} = -\widehat{s}^N_{i}\cp(\btheta^{(t)}\cp)$ as in Equation \eqref{eqn:is}\;
}
\end{algorithm2e}

% NUMERIC------------------------------------------------------------------------------------------
\section{Simulation study}
\label{sec:experiments}

\paragraph{Competitors} We compare different variants of our algorithm corresponding to specific choices of the proposal distribution $\nu_i(\cdot\,;\btheta)$:
\begin{itemize}%[leftmargin = *]
\item \textbf{ISGD-VEM:} we set the proposal distribution to the variational distribution $\phi_\psi$, as defined in Equation \eqref{eqn:variational-sol}.
Although it differs from the optimal proposal distribution
\eqref{eq:proposal-opt}, it represents a natural choice as it serves as the
optimal Gaussian surrogate for the conditional distribution in terms of the
Kullback--Leibler divergence $\phi \mapsto \mathrm{KL}[\phi\Vert
p_{\btheta}(\cdot\mid \bY_i)]$.
% ---------
\item \textbf{ISGD-VEMmix:} there is no guarantee that the Radon--Nikodym
    derivative with respect to the VEM proposal is bounded, and thus ensures
    the convergence of Algorithm \ref{algo:sgis}. To address this, we introduce
    a defensive component in this version and consider
\begin{equation*}
\nu_i(\cdot\,;\btheta) = \mathcal{GM}(\cdot\,; \bm^{\vem}_i, \bS^{\vem}_i, \alpha, \delta),
\end{equation*}
where $\bm^{\vem}_i$ and $\bS^{\vem}_i$ are the mean and covariance of the variational distribution $\phi_\psi$.
% ---------
\item \textbf{AISGD-SNIS:} it corresponds to Algorithm \ref{algo:sgis-adaptive} and the choice of the mixture distribution
\begin{equation*}
\nu_i(\cdot\,;\btheta) = \mathcal{GM}(\cdot\,; \widehat{\bm}_i(\btheta), \widehat{\bS}_i(\btheta), \alpha, \delta),
\end{equation*}
where $\widehat{\bm}_i(\btheta)$ and $\widehat{\bS}_i(\btheta)$ are SNIS estimators of the mean and the covariance of the conditional distribution $p_{\btheta}(\cdot\mid\bY_i)$.
% ---------
\item \textbf{AISGD-Hessian:} it corresponds to Algorithm \ref{algo:sgis-adaptive} and the choice of the mixture distribution
\begin{equation*}
\nu_i(\cdot\,;\btheta) = \mathcal{GM}(\cdot\,; \widehat{\bm}_i(\btheta), \bS^{H}_i(\btheta), \alpha, \delta),
\end{equation*}
with $\bS_i^{H}(\btheta)$, as defined in Equation \eqref{eqn:curv}.
\end{itemize}
All these methods are initialized with the variational estimator $\btheta^{\vem}$
fitted with the standard VEM algorithm implemented in
\texttt{pyPLNmodels}\footnote{\url{https://github.com/PLN-team/pyPLNmodels}}.
Throughout the numerical study, the mixture hyperparameters are set to $\alpha = 0.001$ and $\delta = 1.1$.

\BB{
    Note that the convergence result stated in Theorem \ref{thm:convergence} apply to all the aforementionned competitors but the ISGD-VEM method. Indeed, the proposal used by ISGD-VEM generally does not satisfy Assumption \ref{eqn:A11} whereas the latter assumption is fullfilled for the other algorithms due to the the continuity of the mean and variance of the
proposals with respect to $\btheta$, combined with Lemma \ref{eqn:prop-mixt}. Therefore these methods theoretically converge to a critical point  up to a bias that depends on the choice of the proposal
distribution and the number of Monte Carlo draws $N$. Similarly to EM procedures, the methods converge to the MLE provided that the initialization point is appropriately chosen.
Although we could initialize the method from various starting points, we found that using variational estimates as initialization, combined with the inherent stochasticity of the method, consistently led to this desirable outcome.
}

\subsection{Synthetic data}
\label{sec:synthetic-data}

\paragraph{Data generation} We consider simulation settings with $n = 300$
individuals, $p = 150$ variables, $d = 1$ covariate (that is, one intercept),
and rank constraints $q = 3, 5$ and $15$.
The offset term $\bo$ is set to zero.
For each value of $q$, we sample $M = 100$ datasets $\bY^{(q,m)}$, $m = 1, \ldots, M$, according to the PLN-PCA model \eqref{eqn:PLNPCA} with the following regression parameters
\begin{equation*}
B^{\star}_{kj} \sim \mathcal{N}\left(2, 1\right),\quad k =1,\dots,m,\, j = 1, \dots, p.
\end{equation*}
The covariance matrix $\bSigma^{\star}$ is set to the closest rank-$q$ approximation using singular value decomposition of the Toeplitz matrix $(t_{jk})$, $j,k =1, \ldots, p$, defined as
\begin{equation*}
t_{jk} = 3 \times \mathds{1}_{j = k} + u^{\lvert j-k \rvert},
\end{equation*}
where $u$ is drawn uniformly between $0.6$ and $0.8$.

\paragraph{Experimental design} Each algorithm runs for a total of 1010
epochs with two regimes of batch sizes, $\mathfrak{B} = n$ and $\mathfrak{B} =1$, used along iterations.
We resort to the full dataset for the first 1000 epochs.
During this initial phase, the \BB{Monte Carlo gradient estimator \citep{mohamed2020montecarlogradientestimation}} is computed as the average of SNIS
estimators \eqref{eqn:is} over all individuals, namely
\begin{equation*}
\widehat{g}^{(t)} = -\frac{1}{n} \sum_{i=1}^n s_i^N\cp(\btheta^{(t)}\cp).
\end{equation*}
\BB{
Given the computed gradient estimator, parameters are updated using the
\textsf{Rprop} \citep{Rprop1993Riedmiller}} update rules : \textsf{Rprop}
assigns an individual learning rate to each parameter, which is adjusted based
on the gradient.
Formally, given $A^{(1)} = \mathbf I_d$ and $(\eta_+, \eta_-) = (1.2,0.5)$, \textsf{Rprop} updates correspond to
\begin{equation}
\btheta^{(t+1)} = \btheta^{(t)} - A^{(t)} \widehat{g}^{(t)}, \label{eq:rprop}
\end{equation}
with $A^{(t)}_{jk} = 0$ for $j \neq k$, $j,k = 1,\dots,d$ and
\begin{align*}
A^{(t)}_{jk} & =
\begin{cases} \eta_{+}A^{(t-1)}_{kk} &\text{if } \widehat{g}^{(t)}_k \text{ has same sign than } \widehat{g}^{(t-1)}_{k} \nonumber
\\
\eta_{-} A^{(t-1)}_{kk} & \text{else}
\end{cases},
\quad k = 1,\dots,d.
\end{align*}
\BB{
\textsf{Rprop} typically converges faster than standard SGD because it adapts the step
size of each parameter individually based on the local geometry of the
log-likelihood. \textsf{Rprop} allows to make rapid progress in smooth
directions while reducing oscillations. Importantly, \textsf{Rprop} is more robust to poorly scaled gradients and
less sensitive to learning rate tuning. However, it is not well-suited for
highly stochastic settings: it assumes that gradient signs are stable and
meaningful across steps, which is typically the case only when the gradient is
computed over the full dataset.}

% We refer the reader to Appendix \ref{sec:app-rprop} for additional details on the use of
% \textsf{Rprop}.

This practical approach is motivated by the empirical observations that
\textsf{Rprop} updates tend to converge much faster than the original updates
using a single learning rate and could serve as a warm-up phase.
Our study focuses on the last 10 epochs with a batch size of 1.
This setting corresponds to $T = 3000$ iterations of Algorithm \ref{algo:sgis}.
The learning rate $\gamma$ is determined via a grid search.
We present results corresponding to $N = 500, 1000$, or $2000$ samples for the SNIS estimator \eqref{eqn:is}.

\paragraph{Quality of the importance sampling proposal distribution}

The proposal distributions $\nu_i(\cdot\,;\btheta)$ of the four competitors are compared at initialization with the three following metrics:
\begin{itemize}
    \item the Kullback--Leibler divergence $\mathrm{KL}[\nu_i(\cdot\,;\btheta) \Vert p_{\btheta}\left( \cdot \mid
        \bY_i\right)]$, which gives a discrepancy measure in terms of the
        variational objective function, and thus provides a comparison between
        the proposal distribution and the variational distribution. Given a
        $N$-sample $\bv_{i, 1}, \ldots, \bv_{i, N}$ from
        $\nu_i(\cdot\,;\btheta)$, its Monte Carlo estimator is
        \begin{equation*}
        -\log(N) - \frac{1}{N}\sum_{r = 1}^N \log (\omega_{\btheta, i, r}),
        \quad
        \omega_{\btheta, i, r}= \frac{\rho_{\btheta, i}(\bv_{i, r})}{\sum_{s = 1}^N \rho_{\btheta, i}(\bv_{i, s})}.
        \end{equation*}
    \item the Kullback--Leibler divergence $\mathrm{KL}[p_{\btheta}\left(\cdot \mid
        \bY_i\right) \Vert \nu_i(\cdot\,;\btheta)]$, which relates to the
        efficiency of the importance sampling scheme in terms of bias and
        quadratic error. Its Monte Carlo estimator is
        \begin{equation*}
        \log(N) + \sum_{r = 1}^N \omega_{\btheta, i, r} \log(\omega_{\btheta, i, r}).
        \end{equation*}
    \item The Effective Sample Size (ESS), which assesses how accu\-ra\-te\-ly the weigh\-ted samples from the importance sampling method approximates the target distribution $p_{\btheta}\left( \cdot \mid \bY_i\right)$: a higher effective sample size indicates a better empirical approximation of the target distribution. It is estimated by
    \begin{equation*}
    \left(\sum_{r = 1}^N \omega_{\btheta, i, r}^2\right)^{-1}.
    \end{equation*}
\end{itemize}

\begin{figure}[!ht]
    \begin{center}
        \includegraphics[width=\linewidth]{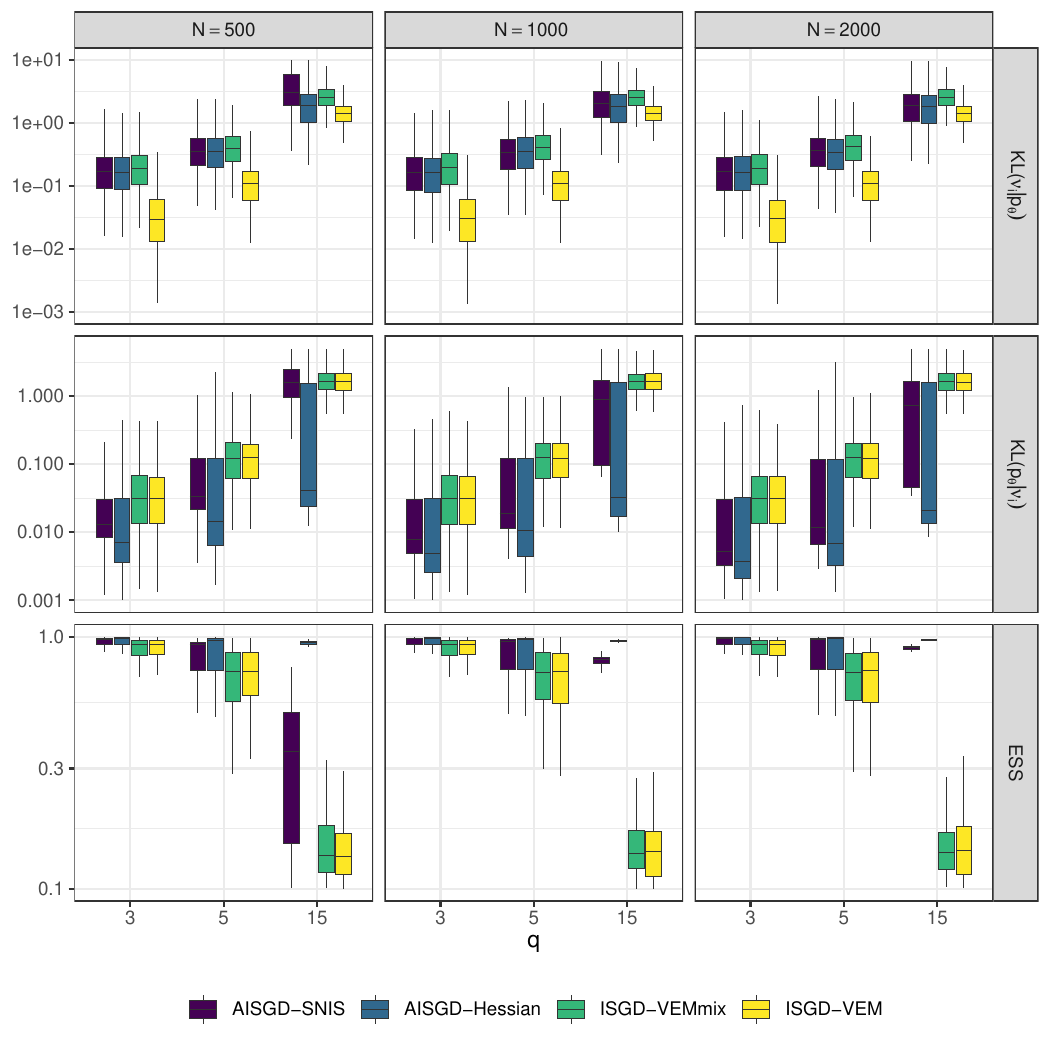}
        \caption{Distribution of the Kullback--Leibler divergence
        $\mathrm{KL}[\nu_i(\cdot\,;\btheta) \Vert p_{\btheta}\left( \cdot \mid
    \bY_i\right)]$ (top row), the Kullback--Leibler divergence
$\mathrm{KL}[p_{\btheta}\left(\cdot \mid  \bY_i\right) \Vert
\nu_i(\cdot\,;\btheta)]$ (middle row), and the effective sample size (bottom
row) as a function of the number of Monte Carlo draws $N$, the rank constraint
$q$, and the inference algorithms (AISGD-SNIS, AISGD-Hessian, ISGD-VEMmix,
ISGD-VEM) at initialization, that is $\btheta^{(1)}$ is the VEM estimate. Each boxplot is based on $M = 100$ synthetic
datasets, with each metric estimated using the specified $N$ Monte Carlo
draws.}
    \label{fig:quality}
    \end{center}
\end{figure}

The results are displayed in Figure \ref{fig:quality}.
A key distinction between the ISGD-VEM proposal and other methods is the use of
a Gaussian mixture distribution. A comparison between ISGD-VEM and ISGD-VEMmix
highlights the impact of adding a defensive component to the variational
solution. As shown in the top row of Figure \ref{fig:quality}, the defensive
component does not yield improvement in terms of the variational objective
function.  However, the ISGD-VEMmix proposal offers equivalent performances
regarding the importance sampling method (middle and bottom rows of
Figure~\ref{fig:quality}).

Furthermore, the comparison of ISGD-VEMmix with the two AISGD variants (SNIS
and Hessian) enables the evaluation of the impact of re\-pla\-cing the diagonal
covariance from the variational distribution with either the conditional
covariance estimate or the curvature estimate. Unsurprisingly, AISGD-SNIS
becomes numerically un\-sta\-ble when we do not have enough Monte Carlo draws
for a given rank constraint $q$.
It is well known that estimating a covariance
matrix is much more de\-man\-ding when the dimension of the sample space
increases. From the variational point of view (top row of Figure
\ref{fig:quality}), AISGD-SNIS and AISGD-Hessian do not exhibit significant
improvement compared to ISGD-VEMmix.
However, providing sufficient Monte Carlo draws are available, both AISGD-SNIS
and AISGD-Hessian consistently demonstrate superior performance for the
importance sampling scheme in comparison to variational-based proposals (middle
and bottom row of Figure \ref{algo:sgis}).
The lower values for the $\mathrm{KL}[p_{\btheta}\left(\cdot \mid \bY_i\right) \Vert \nu_i(\cdot\,;\btheta)]$
indicate a better control over bias and variance, while the performances in terms of ESS demonstrates that
AISGD solutions are robust and efficient when the dimension $q$ increases.
Overall, AISGD-Hessian offers the most favorable practical performances.
In what follows, we thus focus solely on AISGD-Hessian.

\paragraph{Asymptotic normality of the regression coefficient}
A key property of the MLE is the asymptotic
normality that provides tests and confidence intervals on the model parameters.
This property is not guaranteed by the variational estimator, maximizing a \textit{surrogate}
log-likelihood (often called ELBO).
In order to assess the ability of our method to provide valid tests and confidence intervals for each, say, regression parameter $B_{kj}$, we examine the standardized estimates
\begin{equation}
    \widetilde{B}_{kj} = \left(\widehat{B}_{kj} - B^{\star}_{kj}\right) \bigg/ \sqrt{\widehat{\mathbb{V}}[\widehat{B}_{kj}]}
    \label{eqn:stat-test}
\end{equation}
where $B^{\star}_{kj}$ denotes the true value, and
$\widehat{\mathbb{V}}[\widehat{B}_{kj}]$ stands for the estimated variance of $\widehat{B}_{kj}$.
Recall that the Fisher information matrix (FIM) is defined as
\begin{equation}
    \label{eqn:FIM}
    \mathcal{I}({\btheta}) = \mathbb{E}_{\btheta}\left[
    \nabla_{\btheta} \log p_{\btheta}(\bY_1) \left\lbrace\nabla_{\btheta} \log p_{\btheta}(\bY_1) \right\rbrace^{\top}
    \right].
\end{equation}
According to the M-estimator theory, the (asymptotic) variance of $\widehat{B}_{kj}$ is given by the diagonal term corresponding to $B_{kj}$ of the inverse of the FIM, and the distribution of the $\widetilde{B}^{(m)}_{kj}$ across simulations $m = 1,\ldots, M$ should be close to a standard normal.

We illustrate the fit of the test statistics, as defined in \eqref{eqn:stat-test}, to the standard normal distribution at three different stages of the estimation procedure.
The first stage corresponds to the VEM initialization, that is $\widehat{B}_{kj} = \widehat{B}^{\vem}_{kj}$,
and a variational proxy for the unknown (asymptotic) variance of the variational estimator.
This solution assumes that the evidence lower bound can be used in place of the log-likelihood in the definition \eqref{eqn:FIM}.
We refer to this solution as variational FIM.
The second stage corresponds to the first iteration of Algorithm \ref{algo:sgis-adaptive}.
The test statistics are also computed for $\widehat{B}_{kj} = \widehat{B}^{\vem}_{kj}$, but instead of using the variational FIM, we compute a Monte Carlo
estimator of the FIM, referred to as SNIS FIM, given by
\begin{equation}
\label{eqn:SNISFIM}
\widehat{\mathcal{I}}(\btheta) = \frac{1}{n} \sum_{i = 1}^n  \widehat{s}_i^{N}(\btheta) \left\lbrace\widehat{s}_i^{N}(\btheta)\right\rbrace^{\top}.
\end{equation}
The third stage corresponds to the end of the optimization scheme, that is, test statistics computed for $\widehat{B}_{kj} = \widehat{B}^{(T)}_{kj}$. The FIM at that point is also estimated using the Monte Carlo estimator \eqref{eqn:SNISFIM}.

We first perform a Kolmogorov-Smirnov test for the three aforementioned stages with $q = 5$ and $N = 1000$. Figure \ref{fig:ks} represents Kolmogorov-Smirnov $p$-values associated with the $d \times p$ regression parameter $B_{kj}$.
The figure shows that the normality hypothesis is not rejected for the AISGD-Hessian. However, as we could expect, it is rejected for the VEM standardized estimates.

\begin{figure}[!t]
    \begin{center}
        \includegraphics[width=\linewidth]{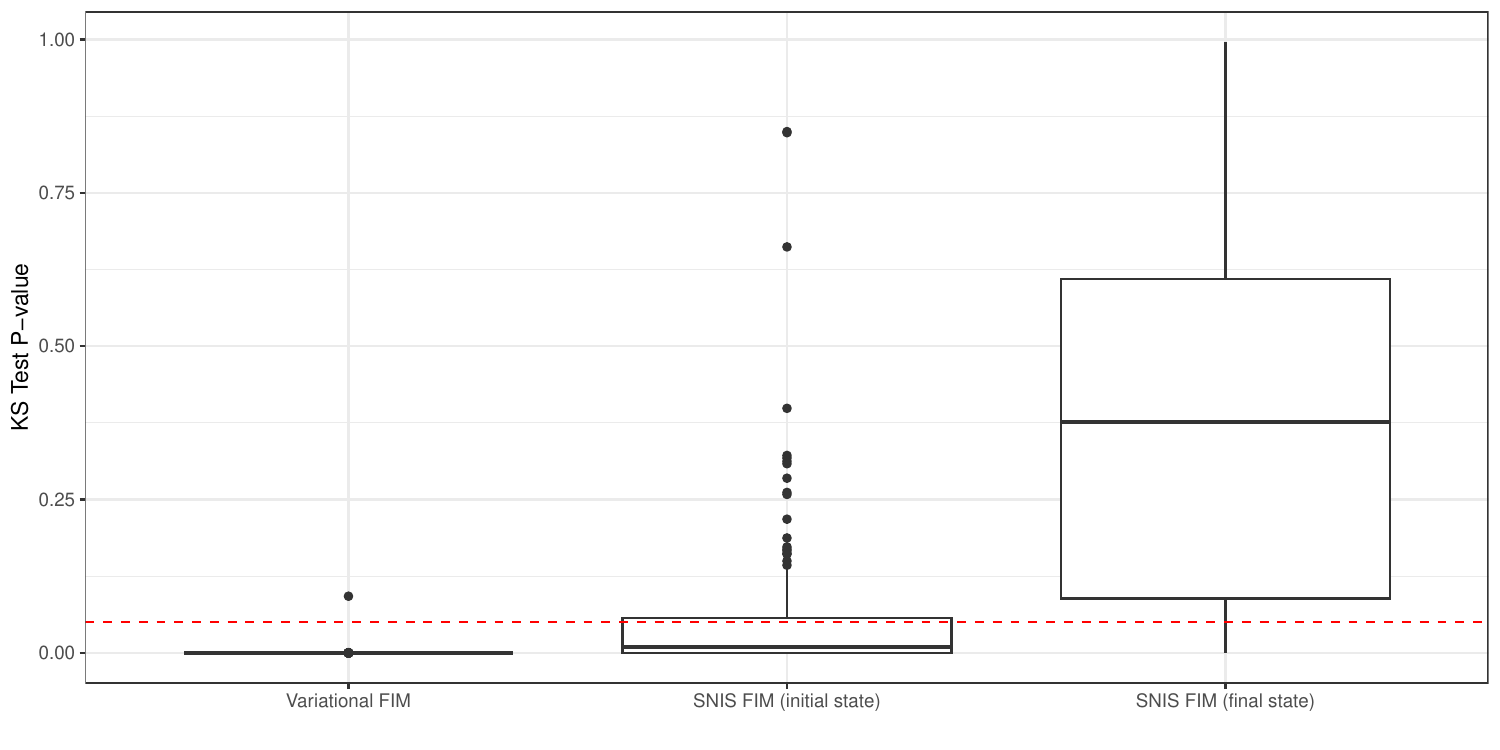}
        \caption{
Distribution of the $p$-values of the Kolmogorov-Smirnov test for the distribution of the standardized estimates $\widetilde{B}_{kj}$, as defined in \eqref{eqn:stat-test},
over $M = 100$ simulations ($n = 300$, $p = 150$, $q = 5$, and $d = 1$) at three different stages: initial state $\btheta^{(1)}$ (VEM estimate) with variational FIM (left) and with SNIS FIM (middle), and final state $\btheta^{(T)}$ of AISGD-Hessian with SNIS FIM (right).
AISGD-Hessian is run with $N = 1000$ Monte Carlo draws.
Each boxplot is built across the $d\times p$ normalized coefficients $\widetilde{B}_{kj}$.
Red dashed line {$\rm [\textcolor{red}{- -}]$}: the $\alpha = 5\%$ significance threshold.
        }
    \label{fig:ks}
    \end{center}
\end{figure}

Figure \ref{fig:qqplot} first illustrates why a departure from normality is
observed for the standardized variational estimates on certain regression
coefficients.

209: The variational FIM solution substantially underestimates the variance (left
210: column), suggesting that a fully variational approach would result in overly
211: narrow confidence intervals.
The variational FIM solution significantly underestimates the variance (left
column), implying, for instance, that using \BB{a fully variational approach would result in overly narrow confidence intervals.}
Similar results were reported on the PLN model in \cite{stoehr2024}.
Conversely, the SNIS FIM offers a satisfying and more accurate estimate of the variance (middle and right columns).
The VEM estimator also exhibits bias (middle column).
Further examination of additional qq-plots (not displayed here) indicates that VEM tends to overestimate the regression coefficients.
Finally, the right column of Figure \ref{fig:qqplot} shows that AISGD-Hessian corrects this bias and thus provides a good fit of the test statistics to the standard normal.

\paragraph{Conclusion}
AISGD-Hessian is a compelling method for inferring the parameters of a PLN PCA
model. The algorithm offers an estimation procedure with valid uncertainty
measures (or statistical tests). In contrast, the variational solution leads to
unreliable outputs due to the approximation used throughout the VEM scheme and
the crude variational FIM estimate.

\begin{figure}[!t]
    \begin{center}
        \includegraphics[width=\linewidth]{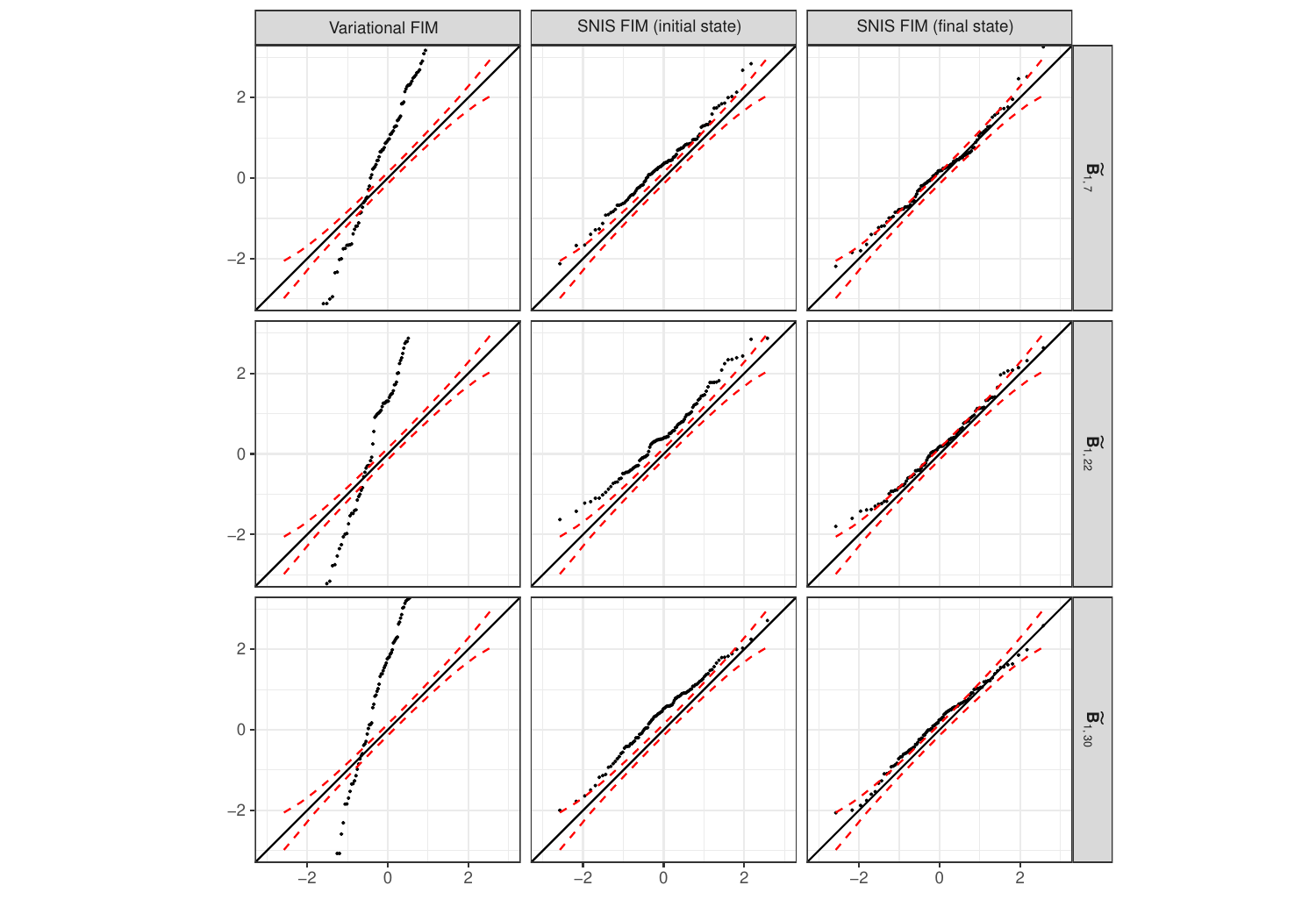}
        \caption{
qq-plots of the standardized regression coefficients $\widetilde{B}_{1j}$ ($j = 7, 22, 30$), as defined in \eqref{eqn:stat-test},
over $M = 100$ simulations ($n = 300$, $p = 150$, $q = 5$, and $d = 1$) at three different stages: initial state $\btheta^{(1)}$ (VEM estimate) with variational FIM (left) and with SNIS FIM (middle), and final state $\btheta^{(T)}$ of AISGD-Hessian with SNIS FIM (right). AISGD-Hessian is run with $N = 1000$ Monte Carlo draws.
$x$-axis: standard normal quantiles, $y$-axis: quantiles of $\widetilde{B}_{1j}$ (black dot $\rm [\bullet]$), red dashed lines  {$\rm [\textcolor{red}{- -}]$}: 95\% bounds for the standard normal qq-plot, black solid line  {$\rm [\textcolor{black}{-}]$}: perfect fit.
}
    \label{fig:qqplot}
    \end{center}
\end{figure}

\subsection{Inference on real data}
\label{sec:real}
\paragraph{Dataset} Real data analysis is based on the scMARK dataset \citep{scMark}.
The latter is a benchmark for single-cell Ribonucleic acid (scRNA) data
designed to serve as an RNA-seq equivalent of the MNIST dataset --- each cell
being labeled by one of the 28 possible cell types.
It corresponds to $n=19 998$ samples (cells) and $p = 14 059$ features (gene
expression).

Due to memory limitations, we cannot address the problem with the original $p =
14059$ features. Therefore, we perform inference on a dataset reduced to $n =
300$ randomly chosen samples from the two most prevalent cell types (\textit{T cells CD4+} and \textit{T cells CD8+}), focusing
on the $p = 100$ features with the largest variance.
The offset term $\bo$ is set to zero.
The two cell types are used as covariates, such that for each cell $i$, the covariates are given by
\begin{equation*}
x_{i1} = \begin{cases}
1 \quad  \text{if cell } i \text{ is a  T cells CD4+}
\\
0 \quad \text{else}
\end{cases},
\quad
x_{i2} = 1 - x_{i1}.
\end{equation*}

\paragraph*{Illustration of Theorem \ref{thm:convergence}}
For each rank constraint $q = 3, 5, 15$, we perform 10 runs of the AISGD-Hessian version of Algorithm \ref{algo:sgis-adaptive} initialized at $\btheta^{(1)} = \btheta^{\vem}$ for $100$ epochs and $N = 5000$ Monte
Carlo draws.
In Figure \ref{fig:real}, we monitor the negative marginal log-likelihood and the gradient norm, averaged over the 10 runs, over the iterations.
We observe a significant gain in terms of the marginal log-likelihood, suggesting that VEM has not converged to the maximum likelihood estimator. Interestingly, AISGD-Hessian proves to be increasingly more efficient as $q$ increases.
This behavior could result from using proposal distributions with a full covariance structure, which provides additional
insights on the underlying geometry as compared to the diagonal approximation used in VEM.
On the other hand, the norm of the gradient of the
objective function decreases in average with $T$ as stated by Theorem \ref{thm:convergence}.
\BB{In practice, one can monitor the variation in the norm of the parameters instead of the norm of the gradient to monitor the convergence of the algorithm to a critical point of the log-likelihood. As aforementionned, such criteria do not ensure the convergence to the MLE but solely to a critical point which is typical in non-convex optimization. A common strategy to look for better local maxima is to run the algorithm for various initializations.
}

\begin{figure}[!ht]
    \begin{center}
        \includegraphics[width=\linewidth]{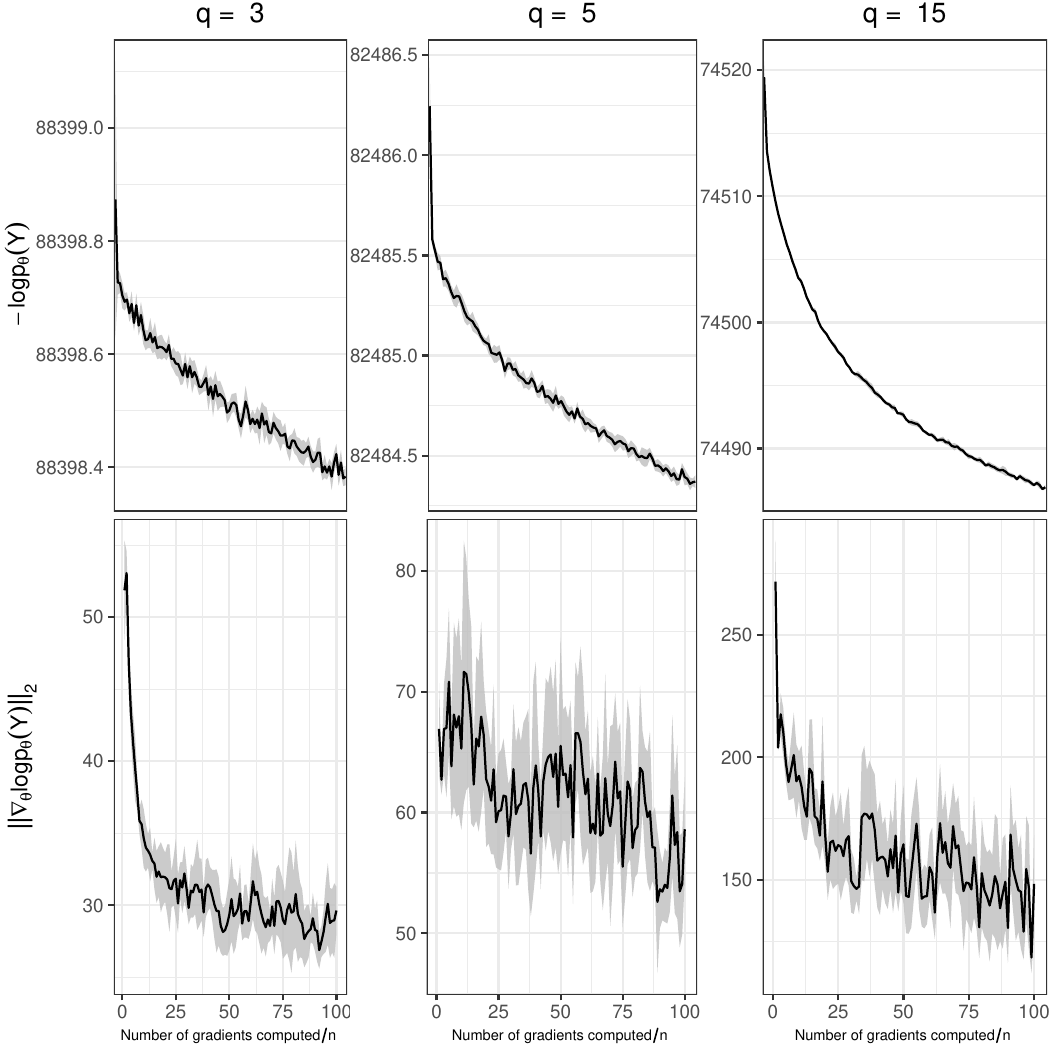}
        \caption{Negative marginal log-likelihood (top row) and norm of the gradient
        of the objective function (bottom row) for AISGD-Hessian as functions of the rank constraint $q$ and the number of epochs for the scMARK dataset reduced to $n = 300$ samples and $p = 100$ variables. Black solid line {$\rm [-]$}: averages over 10 runs of the AISGD-Hessian method initialized with the VEM estimate and $N = 5000$ Monte Carlo draws, grey area: 95\% confidence regions.
        }
    \label{fig:real}
    \end{center}
\end{figure}

\paragraph*{Parameters significance}
A major interest of our method is to achieve maximum likelihood estimation, but also to gain in interpretability thanks to an accurate estimation of the variance of PLN PCA parameters estimates.
This allows practitioners to perform statistical tests and determine if a covariate has a significant effect or not.
Figure \ref{fig:real_ic} presents the confidence intervals for regression coefficients obtained at the initial and final iterations of a run of the AISGD-Hessian method on the reduced scMARK dataset with $q = 5$. As detailed in Section \ref{sec:synthetic-data}, we used here a warm-up phase of $2000$ epochs, a batch size of $n = 300$ and $N = 1000$ Monte Carlo draws. We solely represent the coefficients whose estimates lie between $-1$ and $2$, leaving 34 coefficients out of the $200$ available.

\begin{figure}[!ht]
    \begin{center}
        \includegraphics[width=\linewidth]{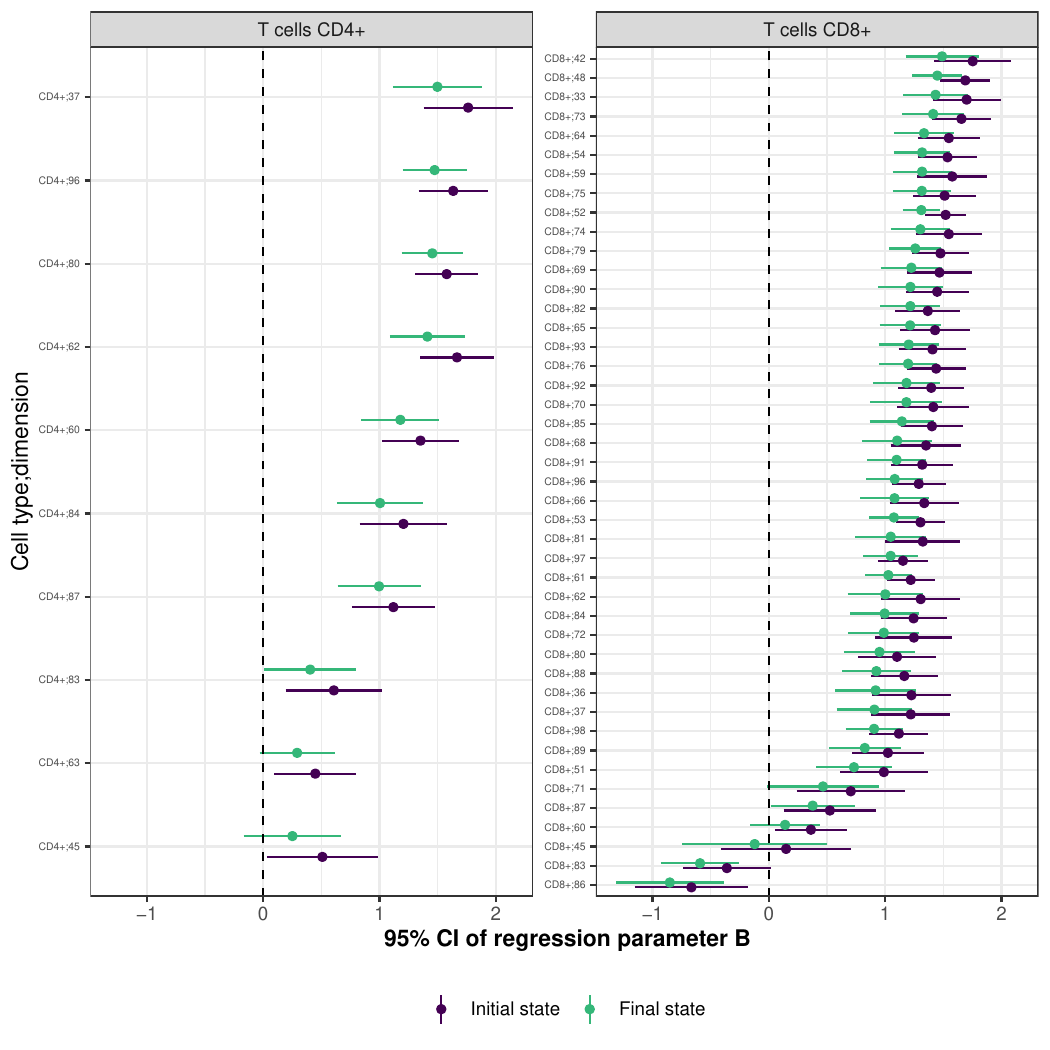}
        \caption{
            Confidence intervals for regression coefficients of $m = 2$ cell groups (CD8+
            and CD4+ T cells) from the scMARK dataset, reduced to $n = 300$ samples and $p
            = 100$ variables.
            The intervals are computed at the initial state corresponding to
            the VEM estimate (purple), and the final state of a run of
            AISGD-Hessian with $N=1000$ Monte Carlo draws. The variance was
            computed using the SNIS FIM estimator \eqref{eqn:SNISFIM}.
            $x$-axis: regression coefficient value, $y$-axis: cell type and feature number.
}
    \label{fig:real_ic}
    \end{center}
\end{figure}

As previously observed in Figure \ref{fig:qqplot}, the VEM method seems to overestimate the regression coefficients consistently.
By correcting this bias, the AISGD-Hessian method leads to differences in the interpretation for a few coefficients.
For instance, we could reject the null hypothesis $B_{kj} = 0$ for all CD4+
coefficients at the VEM initialization, implying a positive impact of all the
covariates. Conversely, at the final iteration, the decision changes for
CD4+;63, CD4+;45 which are no longer significant. The same observation stands
for CD8+;71 and CD8+;60. Additionally, CD8+;83, which initially has no impact,
is found to have a negative impact at the final iteration.

% DISCUSSION------------------------------------------------------------------------------------------
\section{Discussion}

The paper explores the opportunity of a projected stochastic gradient scheme for inferring the parameters of a PLN-PCA model.
Unlike competing variational approaches, our method allows to retrieve
statistical guarantees on the resulting estimate, representing a significant
step forward for the practitioners. This enables the construction of
uncertainty measures and statistical tests.
We have illustrated the benefits of our method on synthetic and real datasets using mixture distributions as proposals.
Obviously we could resort to any distribution provided it satisfies Assumption \ref{hyp:proposal}.  A future work could be to investigate the potential of normalizing flows as proposal distribution, especially for larger rank constraints.

While we have proven the convergence for the PLN-PCA model using an importance
sampling-based gradient estimator, the descent lemma \ref{lem:control} opens
new vista for latent variable models.  Indeed, its minimal assumptions offers
broader applicability, and it can serve as the corner stone for extending
Theorem \ref{thm:convergence} to models with $L$-smooth loss function and
arbitrary random and biased gradient estimator.

A natural extension is to consider models where the emission distribution
belongs to a natural (or canonical) exponential family.
While deriving regularity of the log likelihood, and hence a score estimator,
assessing the $L$-smoothness of the loss function is relatively
straightforward for some families --- typically when the natural parameter
space is the entire vector space ---, such as the Binomial distribution.
Additional technical conditions may however be required for families with
constrained natural parameter spaces, like the negative-Binomial or Gamma
distributions. These conditions might involve integrability constraints on the
moments of the emission distribution with respect to the latent distribution or
modifications to the link function $f_i$ to account for the parameter
constraints.

Beside addressing the inference for other models, the assumption of lemma
\ref{lem:control} on the gradient estimator allows to consider other, possibly
more elaborated, simulation based method to estimate the gradient, such as
diffusion models.

\BB{The present work focuses on a basic version of SGD, but it opens vista for future research,
particularly regarding adaptive variants of the method, in line with recent developments such as those proposed by \citet{surendran}.
A first attempt would be to investigate the use of a variable numbers of Monte Carlo samples $N$ along the iterations.
Indeed, in iterative schemes involving Monte Carlo approximations, high precision is generally not required in the early stages.
This suggests that  gradually increasing $N$ throughout the iterations --- an approach often advocated in the literature
\citep[\textit{e.g.,}][]{saem} --- may prove efficient in terms of computational cost.
Furthermore, we anticipate that the method could
benefit from dynamically adapting the learning
rate to the varying number of samples per
iteration to substantially improve the
convergence.
%Allowing for a variable number of iterations would enable the use of a smaller
%learning rate toward the end \citep{surendran}, while permitting a larger learning rate at the
%beginning, thus adapting the step size to the needs of the optimization
}

\section*{Fundings}
Bastien Bartardière and Julien Chiquet are supported by the French
ANR grant ANR-18-CE45-0023 Statistics and Machine Learning for
Single Cell Genomics (SingleStatOmics).
Julien Stoehr has been partly funded by the European Union (ERC-2022-SYG-OCEAN-101071601).
Views and opinions expressed are however those of the author only and do not necessarily reflect those of the European Union or the European Research Council Executive Agency.
Neither the European Union nor the granting authority can be held responsible for them.

%------------------------------------------------------------------------------------------
\bibliographystyle{imsart-nameyear}
\bibliography{bib}

\begin{thebibliography}{48}
% BibTex style file: imsart-nameyear.bst, 2017-11-03
% Default style options (sort=1,type=nameyear).
% Used options (sort=1,type=nameyear).

\bibitem[\protect\citeauthoryear{Agapiou et~al.}{2017}]{agapiou2017}
\begin{barticle}[author]
\bauthor{\bsnm{Agapiou},~\bfnm{S.}\binits{S.}},
  \bauthor{\bsnm{Papaspiliopoulos},~\bfnm{O.}\binits{O.}},
  \bauthor{\bsnm{Sanz-Alonso},~\bfnm{D.}\binits{D.}} \AND
  \bauthor{\bsnm{Stuart},~\bfnm{A.~M.}\binits{A.~M.}}
(\byear{2017}).
\btitle{{Importance Sampling: Intrinsic Dimension and Computational Cost}}.
\bjournal{Statistical Science}
\bvolume{32}
\bpages{405--431}.
\end{barticle}
\endbibitem

\bibitem[\protect\citeauthoryear{Aitchison and Ho}{1989}]{aitchison1989}
\begin{barticle}[author]
\bauthor{\bsnm{Aitchison},~\bfnm{J.}\binits{J.}} \AND
  \bauthor{\bsnm{Ho},~\bfnm{C.~H.}\binits{C.~H.}}
(\byear{1989}).
\btitle{{The multivariate Poisson-log normal distribution}}.
\bjournal{Biometrika}
\bvolume{76}
\bpages{643--653}.
\end{barticle}
\endbibitem

\bibitem[\protect\citeauthoryear{Ajalloeian and Stich}{2021}]{biased_SGD}
\begin{bmisc}[author]
\bauthor{\bsnm{Ajalloeian},~\bfnm{Ahmad}\binits{A.}} \AND
  \bauthor{\bsnm{Stich},~\bfnm{Sebastian~U.}\binits{S.~U.}}
(\byear{2021}).
\btitle{On the Convergence of SGD with Biased Gradients}.
\end{bmisc}
\endbibitem

\bibitem[\protect\citeauthoryear{Bertsekas}{1999}]{Bertsekas}
\begin{bbook}[author]
\bauthor{\bsnm{Bertsekas},~\bfnm{D.~P.}\binits{D.~P.}}
(\byear{1999}).
\btitle{Nonlinear Programming}.
\bpublisher{Athena Scientific}.
\end{bbook}
\endbibitem

\bibitem[\protect\citeauthoryear{Blei, Kucukelbir and
  McAuliffe}{2017}]{blei2017}
\begin{barticle}[author]
\bauthor{\bsnm{Blei},~\bfnm{David~M.}\binits{D.~M.}},
  \bauthor{\bsnm{Kucukelbir},~\bfnm{Alp}\binits{A.}} \AND
  \bauthor{\bsnm{McAuliffe},~\bfnm{Jon~D.}\binits{J.~D.}}
(\byear{2017}).
\btitle{Variational Inference: A Review for Statisticians}.
\bjournal{Journal of the American Statistical Association}
\bpages{859--877}.
\end{barticle}
\endbibitem

\bibitem[\protect\citeauthoryear{{Boyles}}{1983}]{boyles1983}
\begin{barticle}[author]
\bauthor{\bsnm{{Boyles}},~\bfnm{R.~A.}\binits{R.~A.}}
(\byear{1983}).
\btitle{{On the Convergence of the EM Algorithm}}.
\bjournal{Journal of the Royal Statistical Society: Series B (Methodological)}
\bvolume{45}
\bpages{47--50}.
\end{barticle}
\endbibitem

\bibitem[\protect\citeauthoryear{Bradbury et~al.}{2018}]{jax}
\begin{barticle}[author]
\bauthor{\bsnm{Bradbury},~\bfnm{James}\binits{J.}},
  \bauthor{\bsnm{Frostig},~\bfnm{Roy}\binits{R.}},
  \bauthor{\bsnm{Hawkins},~\bfnm{Peter}\binits{P.}},
  \bauthor{\bsnm{Johnson},~\bfnm{Matthew~James}\binits{M.~J.}},
  \bauthor{\bsnm{Leary},~\bfnm{Chris}\binits{C.}},
  \bauthor{\bsnm{Maclaurin},~\bfnm{Dougal}\binits{D.}},
  \bauthor{\bsnm{Necula},~\bfnm{George}\binits{G.}},
  \bauthor{\bsnm{Paszke},~\bfnm{Adam}\binits{A.}},
  \bauthor{\bsnm{Vander{P}las},~\bfnm{Jake}\binits{J.}},
  \bauthor{\bsnm{Wanderman-{M}ilne},~\bfnm{Skye}\binits{S.}} \AND
  \bauthor{\bsnm{Zhang},~\bfnm{Qiao}\binits{Q.}}
(\byear{2018}).
\btitle{{JAX}: composable transformations of {P}ython+{N}um{P}y programs}.
\end{barticle}
\endbibitem

\bibitem[\protect\citeauthoryear{Bubeck}{2015}]{bubeck2015convex}
\begin{bmisc}[author]
\bauthor{\bsnm{Bubeck},~\bfnm{S{\'e}bastien}\binits{S.}}
(\byear{2015}).
\btitle{Convex Optimization: Algorithms and Complexity}.
\end{bmisc}
\endbibitem

\bibitem[\protect\citeauthoryear{Capp{\'e} et~al.}{2008}]{cappe2008}
\begin{barticle}[author]
\bauthor{\bsnm{Capp{\'e}},~\bfnm{O.}\binits{O.}},
  \bauthor{\bsnm{Douc},~\bfnm{R.}\binits{R.}},
  \bauthor{\bsnm{Guillin},~\bfnm{A.}\binits{A.}},
  \bauthor{\bsnm{Marin},~\bfnm{J.~M.}\binits{J.~M.}} \AND
  \bauthor{\bsnm{Robert},~\bfnm{C.~P.}\binits{C.~P.}}
(\byear{2008}).
\btitle{Adaptive importance sampling in general mixture classes}.
\bjournal{Statistics and Computing}
\bvolume{18}
\bpages{447--459}.
\end{barticle}
\endbibitem

\bibitem[\protect\citeauthoryear{Chatterjee and Diaconis}{2018}]{CD18}
\begin{barticle}[author]
\bauthor{\bsnm{Chatterjee},~\bfnm{Sourav}\binits{S.}} \AND
  \bauthor{\bsnm{Diaconis},~\bfnm{Persi}\binits{P.}}
(\byear{2018}).
\btitle{{The sample size required in importance sampling}}.
\bjournal{The Annals of Applied Probability}
\bvolume{28}
\bpages{1099--1135}.
\end{barticle}
\endbibitem

\bibitem[\protect\citeauthoryear{Chiquet, Mariadassou and
  Robin}{2018a}]{PLNPCA}
\begin{barticle}[author]
\bauthor{\bsnm{Chiquet},~\bfnm{Julien}\binits{J.}},
  \bauthor{\bsnm{Mariadassou},~\bfnm{Mahendra}\binits{M.}} \AND
  \bauthor{\bsnm{Robin},~\bfnm{St\'ephane}\binits{S.}}
(\byear{2018}a).
\btitle{Variational inference for probabilistic Poisson PCA}.
\bjournal{Ann. Appl. Statist.}
\bvolume{12}
\bpages{2674-2698}.
\end{barticle}
\endbibitem

\bibitem[\protect\citeauthoryear{Chiquet, Mariadassou and Robin}{2018b}]{CMR18}
\begin{barticle}[author]
\bauthor{\bsnm{Chiquet},~\bfnm{Julien}\binits{J.}},
  \bauthor{\bsnm{Mariadassou},~\bfnm{Mahendra}\binits{M.}} \AND
  \bauthor{\bsnm{Robin},~\bfnm{St{\'e}phane}\binits{S.}}
(\byear{2018}b).
\btitle{{Variational inference for probabilistic Poisson PCA}}.
\bjournal{The Annals of Applied Statistics}
\bvolume{12}
\bpages{2674 -- 2698}.
\end{barticle}
\endbibitem

\bibitem[\protect\citeauthoryear{Chiquet, Mariadassou and Robin}{2019}]{CMR19}
\begin{binproceedings}[author]
\bauthor{\bsnm{Chiquet},~\bfnm{J.}\binits{J.}},
  \bauthor{\bsnm{Mariadassou},~\bfnm{M.}\binits{M.}} \AND
  \bauthor{\bsnm{Robin},~\bfnm{S.}\binits{S.}}
(\byear{2019}).
\btitle{Variational Inference for sparse network reconstruction from count
  data}.
In \bbooktitle{International Conference on Machine Learning}.
\end{binproceedings}
\endbibitem

\bibitem[\protect\citeauthoryear{Chiquet, Mariadassou and
  Robin}{2021}]{PLNfrontier}
\begin{barticle}[author]
\bauthor{\bsnm{Chiquet},~\bfnm{Julien}\binits{J.}},
  \bauthor{\bsnm{Mariadassou},~\bfnm{Mahendra}\binits{M.}} \AND
  \bauthor{\bsnm{Robin},~\bfnm{St{\'e}phane}\binits{S.}}
(\byear{2021}).
\btitle{The Poisson-Lognormal Model as a Versatile Framework for the Joint
  Analysis of Species Abundances}.
\bjournal{Frontiers in Ecology and Evolution}
\bvolume{9}
\bpages{188}.
\end{barticle}
\endbibitem

\bibitem[\protect\citeauthoryear{Chiquet et~al.}{2023}]{PLNmodels}
\begin{bmanual}[author]
\bauthor{\bsnm{Chiquet},~\bfnm{Julien}\binits{J.}},
  \bauthor{\bsnm{Mariadassou},~\bfnm{Mahendra}\binits{M.}},
  \bauthor{\bsnm{Robin},~\bfnm{St{\'e}phane}\binits{S.}},
  \bauthor{\bsnm{Poggiato},~\bfnm{Giovanni}\binits{G.}} \AND
  \bauthor{\bsnm{Gindraud},~\bfnm{Fran{\c c}ois}\binits{F.}}
(\byear{2023}).
\btitle{PLNmodels: Poisson Lognormal Models}
\bpublisher{PLN teams}.
\end{bmanual}
\endbibitem

\bibitem[\protect\citeauthoryear{Delyon, Lavielle and Moulines}{1999}]{saem}
\begin{barticle}[author]
\bauthor{\bsnm{Delyon},~\bfnm{Bernard}\binits{B.}},
  \bauthor{\bsnm{Lavielle},~\bfnm{Marc}\binits{M.}} \AND
  \bauthor{\bsnm{Moulines},~\bfnm{Eric}\binits{E.}}
(\byear{1999}).
\btitle{Convergence of a Stochastic Approximation Version of the EM Algorithm}.
\bjournal{The Annals of Statistics}
\bvolume{27}
\bpages{94--128}.
\end{barticle}
\endbibitem

\bibitem[\protect\citeauthoryear{Dempster, Laird and
  Rubin}{1977}]{dempster1977}
\begin{barticle}[author]
\bauthor{\bsnm{Dempster},~\bfnm{A.~P.}\binits{A.~P.}},
  \bauthor{\bsnm{Laird},~\bfnm{N.~M.}\binits{N.~M.}} \AND
  \bauthor{\bsnm{Rubin},~\bfnm{D.~B.}\binits{D.~B.}}
(\byear{1977}).
\btitle{Maximum Likelihood from Incomplete Data via the {EM} algorithm}.
\bjournal{Journal of the Royal Statistical Society: Series B}
\bvolume{39}
\bpages{1--38}.
\end{barticle}
\endbibitem

\bibitem[\protect\citeauthoryear{Diaz-Mejia}{2021}]{scMark}
\begin{bmisc}[author]
\bauthor{\bsnm{Diaz-Mejia},~\bfnm{Javier}\binits{J.}}
(\byear{2021}).
\btitle{scMARK an 'MNIST' like benchmark to evaluate and optimize models for
  unifying scRNA data}.
\end{bmisc}
\endbibitem

\bibitem[\protect\citeauthoryear{Drusvyatskiy and Paquette}{2019}]{smoothmap}
\begin{barticle}[author]
\bauthor{\bsnm{Drusvyatskiy},~\bfnm{D.}\binits{D.}} \AND
  \bauthor{\bsnm{Paquette},~\bfnm{C.}\binits{C.}}
(\byear{2019}).
\btitle{Efficiency of minimizing compositions of convex functions and smooth
  maps}.
\bjournal{Math. Program.}
\bvolume{178}
\bpages{503--558}.
\end{barticle}
\endbibitem

\bibitem[\protect\citeauthoryear{Geyer}{1994}]{geyer1994}
\begin{barticle}[author]
\bauthor{\bsnm{Geyer},~\bfnm{Charles~J.}\binits{C.~J.}}
(\byear{1994}).
\btitle{On the Convergence of Monte Carlo Maximum Likelihood Calculations}.
\bjournal{Journal of the Royal Statistical Society. Series B (Methodological)}
\bvolume{56}
\bpages{261--274}.
\end{barticle}
\endbibitem

\bibitem[\protect\citeauthoryear{Ghadimi and
  Lan}{2013}]{ghadimi2013stochasticfirstzerothordermethods}
\begin{bmisc}[author]
\bauthor{\bsnm{Ghadimi},~\bfnm{Saeed}\binits{S.}} \AND
  \bauthor{\bsnm{Lan},~\bfnm{Guanghui}\binits{G.}}
(\byear{2013}).
\btitle{Stochastic First- and Zeroth-order Methods for Nonconvex Stochastic
  Programming}.
\end{bmisc}
\endbibitem

\bibitem[\protect\citeauthoryear{G{\'o}mez-Rubio}{2020}]{INLA}
\begin{bbook}[author]
\bauthor{\bsnm{G{\'o}mez-Rubio},~\bfnm{Virgilio}\binits{V.}}
(\byear{2020}).
\btitle{Bayesian inference with INLA}.
\bpublisher{CRC Press}.
\end{bbook}
\endbibitem

\bibitem[\protect\citeauthoryear{Gutmann and
  Hyv{{\"a}}rinen}{2012}]{gutmann2012}
\begin{barticle}[author]
\bauthor{\bsnm{Gutmann},~\bfnm{Michael~U.}\binits{M.~U.}} \AND
  \bauthor{\bsnm{Hyv{{\"a}}rinen},~\bfnm{Aapo}\binits{A.}}
(\byear{2012}).
\btitle{Noise-Contrastive Estimation of Unnormalized Statistical Models, with
  Applications to Natural Image Statistics}.
\bjournal{Journal of Machine Learning Research}
\bvolume{13}
\bpages{307--361}.
\end{barticle}
\endbibitem

\bibitem[\protect\citeauthoryear{Hall, Ormerod and Wand}{2011}]{HOW11}
\begin{barticle}[author]
\bauthor{\bsnm{Hall},~\bfnm{P.}\binits{P.}},
  \bauthor{\bsnm{Ormerod},~\bfnm{J.~T}\binits{J.~T.}} \AND
  \bauthor{\bsnm{Wand},~\bfnm{MP}\binits{M.}}
(\byear{2011}).
\btitle{Theory of Gaussian variational approximation for a {P}oisson mixed
  model}.
\bjournal{Statistica Sinica}
\bpages{369--389}.
\end{barticle}
\endbibitem

\bibitem[\protect\citeauthoryear{Hinton}{2002}]{hinton2002}
\begin{barticle}[author]
\bauthor{\bsnm{Hinton},~\bfnm{Geoffrey~E}\binits{G.~E.}}
(\byear{2002}).
\btitle{Training products of experts by minimizing contrastive divergence}.
\bjournal{Neural computation}
\bvolume{14}
\bpages{1771--1800}.
\end{barticle}
\endbibitem

\bibitem[\protect\citeauthoryear{Hui}{2016}]{Hui2016}
\begin{barticle}[author]
\bauthor{\bsnm{Hui},~\bfnm{F.~K.~C.}\binits{F.~K.~C.}}
(\byear{2016}).
\btitle{boral - Bayesian Ordination and Regression Analysis of Multivariate
  Abundance Data in r}.
\bjournal{Methods in Ecology and Evolution}
\bvolume{7}
\bpages{744--750}.
\end{barticle}
\endbibitem

\bibitem[\protect\citeauthoryear{Hui et~al.}{2017}]{Hui2017}
\begin{barticle}[author]
\bauthor{\bsnm{Hui},~\bfnm{F.~K.~C.}\binits{F.~K.~C.}},
  \bauthor{\bsnm{Warton},~\bfnm{D.~I.}\binits{D.~I.}},
  \bauthor{\bsnm{Ormerod},~\bfnm{J.~T.}\binits{J.~T.}},
  \bauthor{\bsnm{Haapaniemi},~\bfnm{V.}\binits{V.}} \AND
  \bauthor{\bsnm{Taskinen},~\bfnm{S.}\binits{S.}}
(\byear{2017}).
\btitle{Variational Approximations for Generalized Linear Latent Variable
  Models}.
\bjournal{Journal of Computational and Graphical Statistics}
\bvolume{26}
\bpages{35--43}.
\end{barticle}
\endbibitem

\bibitem[\protect\citeauthoryear{Inouye et~al.}{2017}]{IYA17}
\begin{barticle}[author]
\bauthor{\bsnm{Inouye},~\bfnm{D.~I}\binits{D.~I.}},
  \bauthor{\bsnm{Yang},~\bfnm{E.}\binits{E.}},
  \bauthor{\bsnm{Allen},~\bfnm{G.~I}\binits{G.~I.}} \AND
  \bauthor{\bsnm{Ravikumar},~\bfnm{P.}\binits{P.}}
(\byear{2017}).
\btitle{A review of multivariate distributions for count data derived from the
  Poisson distribution}.
\bjournal{Wiley Interdisciplinary Reviews: Computational Statistics}
\bvolume{9}
\bpages{e1398}.
\end{barticle}
\endbibitem

\bibitem[\protect\citeauthoryear{Kahn}{1949}]{kahn1949}
\begin{bbook}[author]
\bauthor{\bsnm{Kahn},~\bfnm{H.}\binits{H.}}
(\byear{1949}).
\btitle{Stochastic (Monte Carlo) Attenuation Analysis}.
\bpublisher{Rand Corporation}.
\end{bbook}
\endbibitem

\bibitem[\protect\citeauthoryear{Kahn and Harris}{1951}]{kahn1951}
\begin{barticle}[author]
\bauthor{\bsnm{Kahn},~\bfnm{Herman}\binits{H.}} \AND
  \bauthor{\bsnm{Harris},~\bfnm{Theodore~E}\binits{T.~E.}}
(\byear{1951}).
\btitle{Estimation of particle transmission by random sampling}.
\bjournal{National Bureau of Standards applied mathematics series}
\bvolume{12}
\bpages{27--30}.
\end{barticle}
\endbibitem

\bibitem[\protect\citeauthoryear{Karlis}{2005}]{karlis2005algorithm}
\begin{barticle}[author]
\bauthor{\bsnm{Karlis},~\bfnm{Dimitris}\binits{D.}}
(\byear{2005}).
\btitle{EM algorithm for mixed Poisson and other discrete distributions}.
\bjournal{ASTIN Bulletin: The Journal of the IAA}
\bvolume{35}
\bpages{3--24}.
\end{barticle}
\endbibitem

\bibitem[\protect\citeauthoryear{Louis}{1982}]{louis82}
\begin{barticle}[author]
\bauthor{\bsnm{Louis},~\bfnm{Thomas~A.}\binits{T.~A.}}
(\byear{1982}).
\btitle{Finding the Observed Information Matrix When Using the EM Algorithm}.
\bjournal{Journal of the Royal Statistical Society: Series B (Methodological)}
\bvolume{44}
\bpages{226-233}.
\end{barticle}
\endbibitem

\bibitem[\protect\citeauthoryear{Mai and Johansson}{2021}]{mai2021convergence}
\begin{bmisc}[author]
\bauthor{\bsnm{Mai},~\bfnm{Vien~V.}\binits{V.~V.}} \AND
  \bauthor{\bsnm{Johansson},~\bfnm{Mikael}\binits{M.}}
(\byear{2021}).
\btitle{Convergence of a Stochastic Gradient Method with Momentum for
  Non-Smooth Non-Convex Optimization}.
\end{bmisc}
\endbibitem

\bibitem[\protect\citeauthoryear{Mohamed
  et~al.}{2020}]{mohamed2020montecarlogradientestimation}
\begin{bmisc}[author]
\bauthor{\bsnm{Mohamed},~\bfnm{Shakir}\binits{S.}},
  \bauthor{\bsnm{Rosca},~\bfnm{Mihaela}\binits{M.}},
  \bauthor{\bsnm{Figurnov},~\bfnm{Michael}\binits{M.}} \AND
  \bauthor{\bsnm{Mnih},~\bfnm{Andriy}\binits{A.}}
(\byear{2020}).
\btitle{Monte Carlo Gradient Estimation in Machine Learning}.
\end{bmisc}
\endbibitem

\bibitem[\protect\citeauthoryear{Niku et~al.}{2019}]{Niku2019}
\begin{barticle}[author]
\bauthor{\bsnm{Niku},~\bfnm{J.}\binits{J.}},
  \bauthor{\bsnm{Hui},~\bfnm{F.~K.~C.}\binits{F.~K.~C.}},
  \bauthor{\bsnm{Taskinen},~\bfnm{S.}\binits{S.}} \AND
  \bauthor{\bsnm{Warton},~\bfnm{D.~I.}\binits{D.~I.}}
(\byear{2019}).
\btitle{gllvm: Fast analysis of multivariate abundance data with generalized
  linear latent variable models in {R}}.
\bjournal{Methods in Ecology and Evolution}
\bvolume{10}
\bpages{2173-2182}.
\end{barticle}
\endbibitem

\bibitem[\protect\citeauthoryear{Ovaskainen et~al.}{2017}]{Ovaskainen2017}
\begin{barticle}[author]
\bauthor{\bsnm{Ovaskainen},~\bfnm{O.}\binits{O.}},
  \bauthor{\bsnm{Tikhonov},~\bfnm{G.}\binits{G.}},
  \bauthor{\bsnm{Norberg},~\bfnm{A.}\binits{A.}},
  \bauthor{\bsnm{Blanchet},~\bfnm{F.~G.}\binits{F.~G.}},
  \bauthor{\bsnm{Duan},~\bfnm{L.}\binits{L.}},
  \bauthor{\bsnm{Dunson},~\bfnm{D.}\binits{D.}},
  \bauthor{\bsnm{Roslin},~\bfnm{T.}\binits{T.}} \AND
  \bauthor{\bsnm{Abrego},~\bfnm{N.}\binits{N.}}
(\byear{2017}).
\btitle{How to make more out of community data? A conceptual framework and its
  implementation as models and software}.
\bjournal{Ecology Letters}
\bvolume{20}
\bpages{561--576}.
\end{barticle}
\endbibitem

\bibitem[\protect\citeauthoryear{Owen}{2013}]{mcbook}
\begin{bbook}[author]
\bauthor{\bsnm{Owen},~\bfnm{Art~B.}\binits{A.~B.}}
(\byear{2013}).
\btitle{Monte Carlo theory, methods and examples}.
\end{bbook}
\endbibitem

\bibitem[\protect\citeauthoryear{Paszke et~al.}{2019}]{pytorch}
\begin{barticle}[author]
\bauthor{\bsnm{Paszke},~\bfnm{Adam}\binits{A.}},
  \bauthor{\bsnm{Gross},~\bfnm{Sam}\binits{S.}},
  \bauthor{\bsnm{Massa},~\bfnm{Francisco}\binits{F.}},
  \bauthor{\bsnm{Lerer},~\bfnm{Adam}\binits{A.}},
  \bauthor{\bsnm{Bradbury},~\bfnm{James}\binits{J.}},
  \bauthor{\bsnm{Chanan},~\bfnm{Gregory}\binits{G.}},
  \bauthor{\bsnm{Killeen},~\bfnm{Trevor}\binits{T.}},
  \bauthor{\bsnm{Lin},~\bfnm{Zeming}\binits{Z.}},
  \bauthor{\bsnm{Gimelshein},~\bfnm{Natalia}\binits{N.}},
  \bauthor{\bsnm{Antiga},~\bfnm{Luca}\binits{L.}} \betal{et~al.}
(\byear{2019}).
\btitle{Pytorch: An imperative style, high-performance deep learning library}.
\bjournal{Advances in neural information processing systems}
\bvolume{32}.
\end{barticle}
\endbibitem

\bibitem[\protect\citeauthoryear{Riedmiller and
  Braun}{1993}]{Rprop1993Riedmiller}
\begin{binproceedings}[author]
\bauthor{\bsnm{Riedmiller},~\bfnm{M.}\binits{M.}} \AND
  \bauthor{\bsnm{Braun},~\bfnm{H.}\binits{H.}}
(\byear{1993}).
\btitle{A direct adaptive method for faster backpropagation learning: the RPROP
  algorithm}.
In \bbooktitle{IEEE International Conference on Neural Networks}
\bvolume{1}
\bpages{586-591}.
\end{binproceedings}
\endbibitem

\bibitem[\protect\citeauthoryear{Scaman, Malherbe and Santos}{2022}]{scaman22a}
\begin{binproceedings}[author]
\bauthor{\bsnm{Scaman},~\bfnm{Kevin}\binits{K.}},
  \bauthor{\bsnm{Malherbe},~\bfnm{Cedric}\binits{C.}} \AND
  \bauthor{\bsnm{Santos},~\bfnm{Ludovic~Dos}\binits{L.~D.}}
(\byear{2022}).
\btitle{Convergence Rates of Non-Convex Stochastic Gradient Descent Under a
  Generic Lojasiewicz Condition and Local Smoothness}.
In \bbooktitle{Proceedings of the 39th International Conference on Machine
  Learning}
\bvolume{162}
\bpages{19310--19327}.
\bpublisher{PMLR}.
\end{binproceedings}
\endbibitem

\bibitem[\protect\citeauthoryear{Simon}{2002}]{simon2002probability}
\begin{bbook}[author]
\bauthor{\bsnm{Simon},~\bfnm{Marvin~Kenneth}\binits{M.~K.}}
(\byear{2002}).
\btitle{Probability distributions involving Gaussian random variables: A
  handbook for engineers and scientists}.
\bpublisher{Springer}.
\end{bbook}
\endbibitem

\bibitem[\protect\citeauthoryear{Stoehr and Robin}{2024}]{stoehr2024}
\begin{bmisc}[author]
\bauthor{\bsnm{Stoehr},~\bfnm{Julien}\binits{J.}} \AND
  \bauthor{\bsnm{Robin},~\bfnm{Stephane~S.}\binits{S.~S.}}
(\byear{2024}).
\btitle{Composite likelihood inference for the Poisson log-normal model}.
\end{bmisc}
\endbibitem

\bibitem[\protect\citeauthoryear{Surendran et~al.}{2025}]{surendran}
\begin{bmisc}[author]
\bauthor{\bsnm{Surendran},~\bfnm{Sobihan}\binits{S.}},
  \bauthor{\bsnm{Godichon-Baggioni},~\bfnm{Antoine}\binits{A.}},
  \bauthor{\bsnm{Fermanian},~\bfnm{Adeline}\binits{A.}} \AND
  \bauthor{\bsnm{Corff},~\bfnm{Sylvain~Le}\binits{S.~L.}}
(\byear{2025}).
\btitle{Non-asymptotic Analysis of Biased Adaptive Stochastic Approximation}.
\end{bmisc}
\endbibitem

\bibitem[\protect\citeauthoryear{Tierney and
  Kadane}{1986}]{tierney1986accurate}
\begin{barticle}[author]
\bauthor{\bsnm{Tierney},~\bfnm{Luke}\binits{L.}} \AND
  \bauthor{\bsnm{Kadane},~\bfnm{Joseph~B}\binits{J.~B.}}
(\byear{1986}).
\btitle{Accurate approximations for posterior moments and marginal densities}.
\bjournal{Journal of the american statistical association}
\bvolume{81}
\bpages{82--86}.
\end{barticle}
\endbibitem

\bibitem[\protect\citeauthoryear{Tikhonov et~al.}{2020}]{Tikhonov2020}
\begin{barticle}[author]
\bauthor{\bsnm{Tikhonov},~\bfnm{G.}\binits{G.}},
  \bauthor{\bsnm{Opedal},~\bfnm{O.~H.}\binits{O.~H.}},
  \bauthor{\bsnm{Abrego},~\bfnm{N.}\binits{N.}},
  \bauthor{\bsnm{Lehikoinen},~\bfnm{A.}\binits{A.}}, \bauthor{\bparticle{de}
  \bsnm{Jonge},~\bfnm{M.~M.~J.}\binits{M.~M.~J.}},
  \bauthor{\bsnm{Oksanen},~\bfnm{J.}\binits{J.}} \AND
  \bauthor{\bsnm{Ovaskainen},~\bfnm{O.}\binits{O.}}
(\byear{2020}).
\btitle{Joint species distribution modelling with the r-package Hmsc}.
\bjournal{Methods in Ecology and Evolution}
\bvolume{11}
\bpages{442-447}.
\end{barticle}
\endbibitem

\bibitem[\protect\citeauthoryear{van~der Vaart}{1998}]{vdV98}
\begin{bbook}[author]
\bauthor{\bparticle{van~der} \bsnm{Vaart},~\bfnm{A.}\binits{A.}}
(\byear{1998}).
\btitle{Asymptotic statistics}.
\bseries{Cambridge Series in Statistical and Probabilistic Mathematics}
\bvolume{27}.
\bpublisher{Cambridge Univ. Press, New York}.
\end{bbook}
\endbibitem

\bibitem[\protect\citeauthoryear{Westling and McCormick}{2019}]{WeM19}
\begin{barticle}[author]
\bauthor{\bsnm{Westling},~\bfnm{T.}\binits{T.}} \AND
  \bauthor{\bsnm{McCormick},~\bfnm{T.~H}\binits{T.~H.}}
(\byear{2019}).
\btitle{Beyond prediction: A framework for inference with variational
  approximations in mixture models}.
\bjournal{Journal of Computational and Graphical Statistics}
\bvolume{28}
\bpages{778--789}.
\end{barticle}
\endbibitem

\bibitem[\protect\citeauthoryear{Wu}{1983}]{wu1983}
\begin{barticle}[author]
\bauthor{\bsnm{Wu},~\bfnm{C.~F}\binits{C.~F.}}
(\byear{1983}).
\btitle{On the convergence properties of the {EM} algorithm}.
\bjournal{The Annals of Statistics}
\bvolume{11}
\bpages{95--103}.
\end{barticle}
\endbibitem

\end{thebibliography}

% APPENDIX ------------------------------------------------------------------------------------------
\begin{appendix}

%------------------------------------------------------------------------------------------
%------------------------------------------------------------------------------------------
\section{Properties of the PLN-PCA model }%\eqref{eqn:PLNPCA} }
\label{sec:app-pln}
%------------------------------------------------------------------------------------------
%------------------------------------------------------------------------------------------

%------------------------------------------------------------------------------------------
%------------------------------------------------------------------------------------------
\begin{lemma}
\label{lem:unif-bound}
Under Model \eqref{eqn:PLNPCA}, for any compact set $\Theta \subset \mathbb{R}^d$, for all individual $i\in\{1, \ldots, n\}$, there exists two constants $K_i^{\Theta} > 0$ and $\kappa_i^{\Theta}\in\mathbb{R}$, such that for any $\btheta\in\Theta$
\begin{equation}
 \label{eq:linear_log_bounded}
        \log p_{\btheta} \left( \bY_i \mid \bW_i \right) \leq  K_i^{\Theta}\lVert \bW_i \rVert + \kappa_i^{\Theta}.
\end{equation}
\end{lemma}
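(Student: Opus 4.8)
The plan is to work directly from the explicit form of the conditional log-density provided by Model \eqref{eqn:PLNPCA}. Writing $\bZ_i = f_i(\bW_i;\bB,\bC) = \bC\bW_i + \bB^\top\bx_i + \bo_i$ and using the Poisson conditional law, one has
\[
\log p_{\btheta}(\bY_i \mid \bW_i) = \sum_{j=1}^p \bigl( Y_{ij} Z_{ij} - \exp(Z_{ij}) - \log(Y_{ij}!) \bigr) = \langle \bY_i, \bZ_i \rangle - \sum_{j=1}^p \exp(Z_{ij}) - \sum_{j=1}^p \log(Y_{ij}!).
\]
The first step is to discard the two nonpositive contributions: since $Y_{ij}\in\mathbb{N}$ one has $\log(Y_{ij}!)\geq 0$, and $\exp(Z_{ij})>0$, so that $\log p_{\btheta}(\bY_i\mid\bW_i) \leq \langle \bY_i, \bZ_i\rangle$ for every $\btheta$ and every $\bW_i$.

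Next I would split $\langle \bY_i, \bZ_i\rangle = \langle \bC^\top\bY_i, \bW_i\rangle + \langle \bY_i, \bB^\top\bx_i + \bo_i\rangle$ and control each term by Cauchy--Schwarz together with compactness of $\Theta$. For the first term, $\langle \bC^\top\bY_i, \bW_i\rangle \leq \lVert \bC\rVert\,\lVert\bY_i\rVert\,\lVert\bW_i\rVert$; for the second, $\langle \bY_i, \bB^\top\bx_i+\bo_i\rangle \leq \lVert\bY_i\rVert\bigl(\lVert\bB\rVert\,\lVert\bx_i\rVert + \lVert\bo_i\rVert\bigr)$. Since $\btheta=(\bB,\bC)\mapsto(\lVert\bB\rVert,\lVert\bC\rVert)$ is continuous and $\Theta$ is compact, the quantities $M_B^{\Theta} := \sup_{\btheta\in\Theta}\lVert\bB\rVert$ and $M_C^{\Theta} := \sup_{\btheta\in\Theta}\lVert\bC\rVert$ are finite; this is exactly where compactness enters, ensuring the resulting bounds are uniform in $\btheta$.

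Combining the two estimates gives $\log p_{\btheta}(\bY_i\mid\bW_i) \leq M_C^{\Theta}\lVert\bY_i\rVert\,\lVert\bW_i\rVert + \lVert\bY_i\rVert\bigl(M_B^{\Theta}\lVert\bx_i\rVert + \lVert\bo_i\rVert\bigr)$, so one may take $K_i^{\Theta} := \max\{1,\ M_C^{\Theta}\lVert\bY_i\rVert\}$ and $\kappa_i^{\Theta} := \lVert\bY_i\rVert\bigl(M_B^{\Theta}\lVert\bx_i\rVert + \lVert\bo_i\rVert\bigr)$, which depend only on $i$ and $\Theta$. There is essentially no obstacle in this argument; the only minor care needed is to record that the constants are $\btheta$-free and to insert the $\max$ with $1$ so that $K_i^{\Theta}>0$ even in the degenerate case $\bY_i = \mathbf{0}$, in which the inequality holds trivially.
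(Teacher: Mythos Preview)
Your proof is correct but proceeds by a genuinely simpler route than the paper. The paper's argument linearizes the exponential term via the tangent-line inequality $\exp(z)\geq\exp(z_0)(1+z-z_0)$, obtaining
\[
\log p_{\btheta}(\bY_i\mid\bW_i)\leq \bigl\langle \bC^\top\{\bY_i-\exp(z_0)\mathbf{1}_p\},\,\bW_i\bigr\rangle + \kappa_i(\btheta),
\]
and only then applies Cauchy--Schwarz and compactness. You instead notice that $-\exp(Z_{ij})$ and $-\log(Y_{ij}!)$ are both nonpositive and simply discard them, which corresponds to the paper's bound in the degenerate limit $z_0\to-\infty$. Both proofs conclude identically by bounding the remaining bilinear term and taking suprema over $\Theta$. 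Your version is more elementary and shorter; the paper's version carries a free parameter $z_0$ that could in principle yield sharper constants (since $\lVert\bY_i-\exp(z_0)\mathbf{1}_p\rVert$ may be smaller than $\lVert\bY_i\rVert$), but this sharpness is never exploited downstream. Incidentally, your explicit treatment of the $-\log(Y_{ij}!)$ term is slightly more careful than the paper, which drops it silently.
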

%------------------------------------------------------------------------------------------
%------------------------------------------------------------------------------------------
\begin{proof}
Given $i\in\lbrace 1, \ldots, n\rbrace$, $\btheta = (\bB, \bC) \in\mathbb{R}^d$, due to the conditional independance,
we have for $\bZ_i = (Z_{ij})$ as defined by Equation \eqref{eqn:PLNPCA},
\begin{equation*}
\log p_{\btheta}(\bY_i\mid\bW_i) = \langle \bY_i, \bZ_i \rangle - \sum_{j = 1}^p %\lbrace
\exp(Z_{ij})
% + h(Y_{ij})\rbrace.
\end{equation*}
Since $\exp$ is convex and differentiable on $\mathbb{R}$, for any $z, z_0 \in \mathbb{R}$, we have
\begin{equation*}
\exp(z) \geq \exp(z_0) + (z - z_0)\exp(z_0).
\end{equation*}
It follows
\begin{equation*}
\log p_{\btheta}(\bY_i\mid\bW_i)
\leq \langle \bY_i, \bZ_i \rangle - \exp(z_0)\sum_{j = 1}^p Z_{ij} - \sum_{j = 1}^p
%\{h(Y_{ij}) +
(1 - z_0)\exp(z_0).
%\}.
\end{equation*}
Using the definition of $\bZ_i$ from Equation \eqref{eqn:PLNPCA}, we set
\begin{align*}
K_i(\btheta) & = \bC^\top\lbrace \bY_i - \exp(z_0)\mathbf{1}_p\rbrace,
\\
\kappa_i(\btheta) & =
\langle \bY_i - \exp(z_0)\mathbf{1}_p, \bB^\top\bx_i + \bo_i \rangle - \sum_{j = 1}^p
%\{h(Y_{ij}) +
(1 - z_0)\exp(z_0).
%\}.
\end{align*}
Then,
\begin{equation*}
    \log p_{\btheta}(\bY_i \mid \bW_i) \leq \langle K_i(\btheta), \bW_i\rangle + \kappa_i(\btheta),
\end{equation*}
and using the Cauchy--Schwarz inequality we get
\begin{equation*}
    \log p_{\btheta}(\bY_i \mid \bW_i)
    \leq \lVert K_i(\btheta) \rVert  \lVert \bW_i\rVert + \kappa_i(\btheta).
\end{equation*}
The functions $K_i$ and $\kappa_i$ are linear with respect to the parameter coordinates and, consequently, continuous on $\mathbb{R}^d$, and hence on $\Theta$.
Therefore, we can define the upper bounds $K_i^{\Theta} = \sup_{\btheta\in\Theta} \lVert K_i(\btheta) \rVert$ and $\kappa_i^{\Theta} = \sup_{\btheta\in\Theta} \kappa_i(\btheta)$, which provides the result.

\end{proof}
%------------------------------------------------------------------------------------------
%------------------------------------------------------------------------------------------

\sepline

%------------------------------------------------------------------------------------------
%------------------------------------------------------------------------------------------
\propscore*
%------------------------------------------------------------------------------------------
%------------------------------------------------------------------------------------------
\begin{proof}
The result is a direct application of the dominated convergence theorem. Indeed, for all individual $i = 1, \ldots, n$, the likelihood writes as
\begin{equation*}
p_{\btheta}(\bY_i) = \int_{\mathbb{R}^q} p_{\btheta}(\bY_i \mid \bw) \mathcal{N}(\mathrm{d}\bw; \mathbf{0}_q, \mathbf{I}_q).
\end{equation*}
The function $\btheta \mapsto p_{\btheta}(\bY_i \mid \bW_i)$ is twice continuously differentiable on $\mathbb{R}^d$ as a composition of such functions.
Moreover, each component $Z_{ij}$, $j = 1, \ldots, p$, is a linear function of the components of $\btheta$, and for $r, s = 1, \ldots, d$,
\begin{equation}
\label{app:eqn-partial}
\frac{\partial}{\partial \theta_r} Z_{ij} = \sum_{k = 1}^m x_{ik} \mathds{1}_{\theta_r = B_{kj}} + \sum_{k = 1}^q W_{ik} \mathds{1}_{\theta_r = C_{jk}},
\quad
\frac{\partial^2}{\partial \theta_s \partial \theta_r} Z_{ij} = 0.
\end{equation}
Consequently, given a component $\theta_r$, there is a unique index in $\{1, \ldots, p\}$ such that the first partial derivative is non-zero.
Denote by $j$ and $k$ such indices for the partial derivatives with respect to $\theta_r$ and $\theta_s$, respectively.
We then have
\begin{align*}
\frac{\partial}{\partial \theta_r} p_{\btheta}(\bY_i \mid \bW_i)
& = p_{\btheta}(\bY_i \mid \bW_i)
 \left\lbrace Y_{ij} - \exp(Z_{ij}) \right\rbrace \frac{\partial}{\partial \theta_r} Z_{ij},
\\
\frac{\partial^2}{\partial\theta_s \partial \theta_r} p_{\btheta}(\bY_i \mid \bW_i)
& = p_{\btheta}(\bY_i \mid \bW_i)
\Bigg[
    \BB{- \exp(Z_{ij}) \frac{\partial}{\partial \theta_r} Z_{ij} \frac{\partial}{\partial \theta_s} Z_{ij}}
\\
& \hspace{1.5em}
\BB{+\left\lbrace Y_{ij} - \exp(Z_{ij}) \right\rbrace \left\lbrace Y_{ik} - \exp(Z_{ik}) \right\rbrace \frac{\partial}{\partial \theta_r} Z_{ij} \frac{\partial}{\partial \theta_s} Z_{ik}}
\Bigg].
\end{align*}
It follows from \eqref{app:eqn-partial} that for any $r = 1, \ldots, d$, and any $j = 1, \ldots, p$
\begin{equation*}
\left\lvert \frac{\partial}{\partial \theta_r} Z_{ij} \right\rvert \leq \lVert \bx_i \rVert + \lVert \bW_i \rVert.
\end{equation*}
Given a non-empty open and bounded set $\Theta\subset\mathbb{R}^d$, it follows from Lemma \ref{lem:unif-bound} that it exists two constants $K_i^{\Theta} > 0$ and $\kappa_i^{\Theta}$ such that for any $\btheta\in\Theta$
\begin{equation*}
p_{\btheta}(\bY_i \mid \bW_i) \leq \exp \left( K_i^{\Theta} \lVert \bW_i \rVert  + \kappa_i^{\Theta} \right).
\end{equation*}
On the other hand, for any $\btheta\in\Theta$, any $j = 1, \ldots, p$
\begin{equation*}
\lvert Z_{ij} \rvert \leq A^{\Theta} \lVert \bW_i \rVert + R_i^{\Theta},
\quad\text{with}\quad
A^{\Theta} = \sup_{\btheta\in\Theta} \lVert \btheta \rVert,
\quad
R_i^{\Theta} = \lVert \bx_i \rVert \sup_{\btheta\in\Theta} \lVert \btheta \rVert +  \max_{j = 1,\ldots,p} \lvert o_{ij} \rvert.
\end{equation*}
Consequently, for any $j = 1, \ldots, p$, since $\exp(Z_{ij}) \leq \exp(\lvert Z_{ij} \rvert)$,
\begin{align*}
\lvert Y_{ij} - \exp(Z_{ij}) \rvert
& \leq \exp(\lvert Z_{ij} \rvert) \left\lbrace \lVert \bY_i \rVert \exp(-\lvert Z_{ij} \rvert) + 1 \right\rbrace \\
& \leq \BB{\exp\left(A^{\Theta} \lVert \bW_i \rVert + R_i^{\Theta}\right) \left( \lVert \bY_i \rVert + 1 \right).}
\end{align*}
Thereby, for any $j = 1, \ldots, p$,
\begin{align*}
\left\lvert
\left\lbrace Y_{ij} - \exp(Z_{ij}) \right\rbrace \frac{\partial}{\partial \theta_r} Z_{ij}
\right\rvert
& \leq \left( \lVert \bx_i \rVert + \lVert \bW_i \rVert \right)  \left(
 \left \lVert \bY_i \right \rVert  + 1
 \right) \exp\left(A^{\Theta} \lVert \bW_i \rVert + R_i^{\Theta}\right)
\\
\left\lvert
\exp(Z_{ij}) \frac{\partial}{\partial \theta_r} Z_{ij} \frac{\partial}{\partial \theta_s} Z_{ij}
\right\rvert
& \leq \left( \lVert \bx_i \rVert + \lVert \bW_i \rVert \right)^2 \exp\left(A^{\Theta} \lVert \bW_i \rVert + R_i^{\Theta}\right)
\\
& \leq \left( \lVert \bx_i \rVert + \lVert \bW_i \rVert \right)^2 \exp\left(2A^{\Theta} \lVert \bW_i \rVert + 2R_i^{\Theta}\right),
\end{align*}
where the last inequality follows because $A^{\Theta}$ and $R_i^{\Theta}$ are both positive.
Finally, we get that on $\Theta$
\begin{align*}
 \left\lvert
\frac{\partial}{\partial \theta_r} p_{\btheta}({\bY_i \mid \bW_i})
 \right\rvert
  \exp \left( -\frac{\lVert\bW_i \rVert ^2}{2} \right)
& \leq \left( \lVert \bx_i \rVert + \lVert \bW_i \rVert \right)
Q_i^{\Theta}(\bW_i, 1)
\\
 \left\lvert
\frac{\partial}{\partial \theta_s\partial \theta_r} p_{\btheta}({\bY_i \mid \bW_i})
 \right\rvert
  \exp \left( -\frac{\lVert\bW_i \rVert ^2}{2} \right)
& \leq \left( \lVert \bx_i \rVert + \lVert \bW_i \rVert \right)^2
 Q_i^{\Theta}(\bW_i, 2),
\end{align*}
with
\begin{align*}
    Q_i^{\Theta}(\bW_i, k)  =  \left(\left(\lVert \bY_i \rVert + 1
 \right)^k
+  k-1\right)&\exp\Bigg\lbrace
- \frac{1}{2}\lVert \bW_i \rVert^2 + \left(K_i^{\Theta} + k A^{\Theta}\right) \lVert \bW_i \rVert   \\
&
\BB{+ \kappa_i^{\Theta} + k R_i^{\Theta}}
\Bigg\rbrace.
\end{align*}
Crucially, each of these upper bounds is a Lebesgue integrable function on $\mathbb{R}^q$ that does not depend on $\btheta\in\Theta$.
Consequently, we can conclude that the likelihood is twice continuously differentiable on $\Theta$  with the dominated convergence theorem.
Since the result holds for any open and bounded set $\Theta\subset\mathbb{R}^d$, for all $\btheta\in\mathbb{R}^d$, we can apply it to an open $d$-ball with center $\btheta$.
Therefore, the likelihood is twice continuously differentiable on $\mathbb R^d$.
Moreover
\begin{equation*}
\nabla_{\btheta}p_{\btheta}(\bY_i)
= \int_{\mathbb{R}^q} \nabla_{\btheta}p_{\btheta}(\bY_i \mid \bw) \mathcal{N}(\mathrm{d}\bw; \mathbf{0}_q, \mathbf{I}_q))
=  \int_{\mathbb{R}^q} \nabla_{\btheta}p_{\btheta}(\bY_i, \bw)\mathrm{d}\bw.
\end{equation*}

For all $\bY_i\in\mathbb{N}^p$, the likelihood $\btheta\mapsto p_{\btheta}(\bY_i)$ is positive on $\mathbb{R}^d$.
Indeed, the integrand is positive everywhere since, by definition
\begin{equation*}
\bw \mapsto p_{\btheta}(\bY_i \mid \bw) = \prod_{j = 1}^p p(Y_{ij} \mid z_{ij}),
\quad (z_{i1}, \ldots, z_{ip})^{\top} = f_{i}(\bw;\bB, \bC),
\end{equation*}
and for any $z \in\mathbb{R}$, $y \mapsto p(y\mid z)$ is positive on $\mathbb{N}$.
The continuous differentiability of the log-likelihood follows directly by composition. Finally,
\begin{align*}
\nabla_{\btheta}\log p_{\btheta}(\bY_i)
= \frac{1}{p_{\btheta}(\bY_i)} \int_{\mathbb{R}^q} \nabla_{\btheta}p_{\btheta}(\bY_i, \bw)\mathrm{d}\bw
& = \int_{\mathbb{R}^q}
\frac{p_{\btheta}(\bY_i, \bw)
\nabla_{\btheta}\log p_{\btheta}(\bY_i, \bw)}{p_{\btheta}(\bY_i)}
\mathrm{d}\bw
\\
& = \int_{\mathbb{R}^q}
\nabla_{\btheta}\log p_{\btheta}(\bY_i, \bw)
p_{\btheta}(\mathrm{d}\bw \mid \bY_i).
\end{align*}

%\begin{align*}
%\nabla_{\bB} p_{\btheta}(\bY_i \mid \bW)
%& = p_{\btheta}(\bY_i \mid \bW) \bx_i \left\lbrace \bY_i - A'_{\odot}(\bC\bW + \bB^\top\bx_i + \bo_i) \right\rbrace^\top,
%\\
%\nabla_{\bC} p_{\btheta}(\bY_i \mid \bW)
%& = p_{\btheta}(\bY_i \mid \bW)
% \left\lbrace \bY_i -  A'_{\odot}(\bC\bW + \bB^\top\bx_i + \bo_i) \right\rbrace \bW^\top.
%\end{align*}
%Therefore, using successively the triangle inequality and the sub-multiplicativity of the Frobenius-norm, we get that on $\Theta$
%\begin{align*}
% \left \lVert
%\nabla_{\bB} p_{\btheta}(\bY_i \mid \bW)
% \right \rVert _{F}
%& \leq h_i(\bW)
% \left \lVert \bx_i \right \rVert
% \left\lbrace
% \left \lVert \bY_i \right \rVert  +  \left \lVert A'_{\odot}(\bC\bW + \bB^\top\bx_i + \bo_i)  \right \rVert
% \right\rbrace,
%\\
% \left \lVert
%\nabla_{\bC} p_{\btheta}(\bY_i \mid \bW)
% \right \rVert _{F}
%& \leq h_i(\bW)
% \left \lVert \bW \right \rVert
% \left\lbrace
% \left \lVert \bY_i \right \rVert  +  \left \lVert A'_{\odot}(\bC\bW + \bB^\top\bx_i + \bo_i)  \right \rVert
% \right\rbrace.
%\end{align*}
%In order to conclude with the dominated convergence theorem, we need to derive upper bounds that are independent of $\btheta$, and $\mathcal{N}(\mathbf{0}_q, \mathbf{I}_q)$-integrable.
%The assumption on $A'_{\odot}$ yields that
%\begin{equation*}
%\exp \left( -\frac{1}{2}\lVert\bW \rVert ^2 \right)
% \left \lVert A'_{\odot}(\bC\bW + \bB^\top\bx_i + \bo_i)  \right \rVert
%\leq \exp \left\lbrace- \left( \frac{1}{2} - \varepsilon \right)\lVert\bW \rVert ^2 \right\rbrace \Psi_i(\bW).
%\end{equation*}
\end{proof}
%------------------------------------------------------------------------------------------
%------------------------------------------------------------------------------------------

\sepline

%------------------------------------------------------------------------------------------
%------------------------------------------------------------------------------------------
\propsmoothness*
%------------------------------------------------------------------------------------------
%------------------------------------------------------------------------------------------
\begin{proof}
Let $\Theta\subset\mathbb{R}^d$ be a nonempty compact set.
\begin{enumerate}
\item From Proposition \ref{prop:score}, the likelihood of the observed data $\btheta \mapsto p_{\btheta}(\bY_i)$ is continuous on the compact set $\Theta$.
Thus, it is bounded and attains its bounds on $\Theta$.
Moreover, the likelihood of the observed data is positive on $\mathbb{R}^d$, so is its lower bound on $\Theta$.

\item A direct consequence of Proposition \ref{prop:score}, is that $\btheta \mapsto \nabla \ell(\btheta)$ is continuous differentiable on the compact set $\Theta$.
Therefore, it is Lipschitz continuous.
\end{enumerate}

\end{proof}
%------------------------------------------------------------------------------------------
%------------------------------------------------------------------------------------------

\sepline

%------------------------------------------------------------------------------------------
%------------------------------------------------------------------------------------------
\section{Intermediate optimization results and proof of Lemma \ref{lem:control}}
\label{sec:app-optim}
%------------------------------------------------------------------------------------------
%------------------------------------------------------------------------------------------

%We denote $\mathbb E_t$ the expectation $\mathbb E \left[ \cdot | \tat,... \ta^{(1)} \right]$ and $\lVert \cdot \rVert \triangleq \lVert \cdot \rVert_2$.
In this Section, we consider results for minimizing a function $\ell$ on $\mathbb{R}^d$. We denote $\Theta\subset\mathbb{R}^d$ a compact and convex set.

%------------------------------------------------------------------------------------------
%------------------------------------------------------------------------------------------
\begin{lemma}[Non-expansiveness, {\citet[][Prop 2.1.3]{Bertsekas}}]
\label{lemma:proj}
The projection $P_{\Theta}$   satisfies the non-expansiveness property, namely for all $\btheta, \btheta' \in\mathbb{R}^d$
\begin{equation*}
 \left\lVert
P_{\Theta}(\btheta) - P_{\Theta}(\btheta')
 \right\rVert
\leq
 \left\lVert
\btheta - \btheta'
 \right\rVert.
\end{equation*}
\end{lemma}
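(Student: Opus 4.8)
The plan is to derive the contraction estimate from the variational characterization of the Euclidean projection onto a nonempty closed convex set (here $\Theta$, which is convex and, being compact, closed). First I would recall that for every $\btheta\in\mathbb{R}^d$ the problem $\min_{z\in\Theta}\lVert z-\btheta\rVert^2$ has a unique minimizer, namely $P_{\Theta}(\btheta)$: existence follows from closedness (continuity of the squared distance on a set over which a sublevel set is compact), and uniqueness from strict convexity of the squared norm together with convexity of $\Theta$ via the parallelogram law. The key tool is the first-order optimality condition: for any $\btheta\in\mathbb{R}^d$ and any $z\in\Theta$,
\begin{equation*}
\langle \btheta - P_{\Theta}(\btheta),\, z - P_{\Theta}(\btheta)\rangle \leq 0.
\end{equation*}
This is obtained by noting that, by convexity, $P_{\Theta}(\btheta) + t\bigl(z - P_{\Theta}(\btheta)\bigr)\in\Theta$ for $t\in[0,1]$, so $t\mapsto \lVert P_{\Theta}(\btheta) + t(z-P_{\Theta}(\btheta)) - \btheta\rVert^2$ is minimized at $t=0$; differentiating at $t=0^{+}$ gives the inequality.

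Next I would apply this characterization twice: once at $\btheta$ with the test point $z = P_{\Theta}(\btheta')\in\Theta$, and once at $\btheta'$ with the test point $z = P_{\Theta}(\btheta)\in\Theta$. Writing $u = P_{\Theta}(\btheta)$ and $u' = P_{\Theta}(\btheta')$ for brevity, this yields $\langle \btheta - u,\, u' - u\rangle \leq 0$ and $\langle \btheta' - u',\, u - u'\rangle \leq 0$. Adding these two inequalities and rearranging the terms produces
\begin{equation*}
\lVert u - u'\rVert^2 \leq \langle \btheta - \btheta',\, u - u'\rangle .
\end{equation*}

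Finally, by the Cauchy--Schwarz inequality the right-hand side is at most $\lVert \btheta - \btheta'\rVert\,\lVert u - u'\rVert$, whence $\lVert u - u'\rVert^2 \leq \lVert \btheta - \btheta'\rVert\,\lVert u - u'\rVert$. If $u = u'$ the claimed bound is trivial; otherwise, dividing both sides by $\lVert u - u'\rVert>0$ gives $\lVert P_{\Theta}(\btheta) - P_{\Theta}(\btheta')\rVert \leq \lVert \btheta - \btheta'\rVert$, which is the assertion. There is no genuine obstacle in this argument; the only step demanding care is the justification of the variational inequality displayed above — that is, the well-definedness of the projection and its first-order optimality condition — which is precisely the place where the convexity (and closedness) of $\Theta$ is used. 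Since the result is completely classical, one may alternatively simply invoke \citet[Prop.~2.1.3]{Bertsekas}.
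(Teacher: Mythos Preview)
Your argument is correct and is exactly the classical proof of non-expansiveness via the variational (obtuse-angle) characterization of the Euclidean projection, which is precisely what \citet[Prop.~2.1.3]{Bertsekas} establishes. The paper itself does not supply an independent proof but simply cites that reference, so your proposal is fully aligned with the paper's treatment.
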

%------------------------------------------------------------------------------------------
%------------------------------------------------------------------------------------------

\sepline

%------------------------------------------------------------------------------------------
%------------------------------------------------------------------------------------------
For a real $\eta > 0$, the proximal operator of $F$ as defined in Equation \eqref{eqn:F} is given by
\begin{equation*}
\mathrm{prox}_{F}^{\eta}(\btheta) = \argmin_{\btheta' \in \mathbb{R}^d} \left\{ F(\btheta') + \frac {1}{2 \eta} \lVert \btheta - \btheta' \rVert^2 \right\}.
\end{equation*}

\begin{lemma}[{\citet[][Lemma 4.3]{smoothmap}}]\label{lem:grad_E}
If the function $\ell$ is $L$-smooth, for any real constant $\eta < 1/L$, the
Moreau envelope $F_{\eta}$ is differentiable on $\mathbb{R}^d$ and relates to
the proximal operator through the following equation
\begin{equation*}
    \nabla F_{\eta}(\btheta) = \frac{1}{\eta} \left\lbrace \btheta - \mathrm{prox}_{F}^{\eta}(\btheta) \right\rbrace.
\end{equation*}
\end{lemma}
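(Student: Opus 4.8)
The subtlety here is that $F$ is \emph{not} convex: the loss $\ell$ is only $L$-smooth, so the textbook convex-analytic description of the Moreau envelope is not available off the shelf. The plan is to exploit weak convexity. Since $\nabla\ell$ is $L$-Lipschitz, the map $\btheta'\mapsto\ell(\btheta')+\tfrac{L}{2}\lVert\btheta'\rVert^2$ has a monotone gradient, hence is convex. Writing, for fixed $\btheta$,
\begin{equation*}
h_{\btheta}(\btheta') := F(\btheta') + \frac{1}{2\eta}\lVert\btheta-\btheta'\rVert^2 = \Big(\ell(\btheta')+\frac{L}{2}\lVert\btheta'\rVert^2\Big) + I_{\Theta}(\btheta') + \frac{1}{2\eta}\lVert\btheta-\btheta'\rVert^2 - \frac{L}{2}\lVert\btheta'\rVert^2,
\end{equation*}
the last two terms form a quadratic in $\btheta'$ with Hessian $(\tfrac1\eta - L)\mathbf{I}_d$, which is positive definite precisely because $\eta<1/L$. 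Thus $h_{\btheta}$ is the sum of a convex function, the convex lower semicontinuous indicator $I_{\Theta}$, and a $(\tfrac1\eta-L)$-strongly convex quadratic, hence is $(\tfrac1\eta-L)$-strongly convex; since its domain is the compact set $\Theta$ and it is continuous there, the infimum defining $\mathrm{prox}_{F}^{\eta}(\btheta)$ is attained, and strong convexity makes the minimiser unique, so $\mathrm{prox}_{F}^{\eta}$ is well defined and single-valued on $\mathbb{R}^d$.

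First I would record a Lipschitz bound for $\btheta\mapsto\bar{\btheta}(\btheta):=\mathrm{prox}_{F}^{\eta}(\btheta)$. Comparing the first-order optimality conditions of $h_{\btheta_0}$ and $h_{\btheta_1}$ at $\bar{\btheta}(\btheta_0)$ and $\bar{\btheta}(\btheta_1)$, using monotonicity of the normal cone of $\Theta$ and the elementary bound $\langle\nabla\ell(u)-\nabla\ell(v),u-v\rangle\ge -L\lVert u-v\rVert^2$ (a consequence of $L$-Lipschitzness of $\nabla\ell$), one gets $\tfrac1\eta\langle\btheta_1-\btheta_0,\bar{\btheta}(\btheta_1)-\bar{\btheta}(\btheta_0)\rangle\ge(\tfrac1\eta-L)\lVert\bar{\btheta}(\btheta_1)-\bar{\btheta}(\btheta_0)\rVert^2$, hence $\lVert\bar{\btheta}(\btheta_1)-\bar{\btheta}(\btheta_0)\rVert\le\frac{1}{1-L\eta}\lVert\btheta_1-\btheta_0\rVert$ by Cauchy--Schwarz.

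Then I would prove differentiability and identify the gradient by an envelope argument. Using that $\bar{\btheta}(\btheta_0)$ is feasible (hence suboptimal) for $h_{\btheta}$ and $\bar{\btheta}(\btheta)$ for $h_{\btheta_0}$, and expanding the squared norms about $\btheta_0$ (resp.\ about $\btheta$), one obtains, for all $\btheta,\btheta_0\in\mathbb{R}^d$,
\begin{align*}
F_{\eta}(\btheta) - F_{\eta}(\btheta_0) &\le \frac{1}{\eta}\langle\btheta-\btheta_0,\,\btheta_0-\bar{\btheta}(\btheta_0)\rangle + \frac{1}{2\eta}\lVert\btheta-\btheta_0\rVert^2,\\
F_{\eta}(\btheta) - F_{\eta}(\btheta_0) &\ge \frac{1}{\eta}\langle\btheta-\btheta_0,\,\btheta-\bar{\btheta}(\btheta)\rangle - \frac{1}{2\eta}\lVert\btheta-\btheta_0\rVert^2.
\end{align*}
Replacing $\btheta-\bar{\btheta}(\btheta)$ by $\btheta_0-\bar{\btheta}(\btheta_0)$ in the lower bound introduces an error $\langle\btheta-\btheta_0,(\btheta-\btheta_0)-(\bar{\btheta}(\btheta)-\bar{\btheta}(\btheta_0))\rangle$, which is $O(\lVert\btheta-\btheta_0\rVert^2)$ by the Lipschitz estimate of the previous step. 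Both bounds then read $F_{\eta}(\btheta)-F_{\eta}(\btheta_0)=\tfrac1\eta\langle\btheta-\btheta_0,\btheta_0-\bar{\btheta}(\btheta_0)\rangle+o(\lVert\btheta-\btheta_0\rVert)$, i.e.\ $F_{\eta}$ is Fréchet differentiable with $\nabla F_{\eta}(\btheta_0)=\tfrac1\eta\{\btheta_0-\mathrm{prox}_{F}^{\eta}(\btheta_0)\}$.

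The hard part is purely the non-convexity of $F$: every step rests on the single observation that $\eta<1/L$ upgrades $h_{\btheta}$ to a strongly convex problem even though $\ell$ is merely $L$-smooth, and on carrying the resulting strong-convexity modulus through to the Lipschitz control of $\bar{\btheta}$; once that is in place, the envelope computation is routine. Alternatively, one may simply invoke \citet[][Lemma 4.3]{smoothmap}, or any standard account of Moreau envelopes of weakly convex functions, which is the route adopted here.
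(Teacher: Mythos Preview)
Your proposal is correct and self-contained. Note, however, that the paper does not actually prove this lemma: it is stated as a direct citation of \citet[][Lemma~4.3]{smoothmap} and used as a black box, so there is no ``paper's own proof'' to compare against beyond the reference itself.

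What you have done is supply the standard weakly-convex argument that underlies that cited result: recognising that $L$-smoothness of $\ell$ makes $F$ an $L$-weakly convex function (equivalently, $\ell(\cdot)+\tfrac{L}{2}\lVert\cdot\rVert^2$ is convex), so that for $\eta<1/L$ the inner problem defining $\mathrm{prox}_F^\eta$ is $(\eta^{-1}-L)$-strongly convex, yielding a unique minimiser; then deriving a Lipschitz bound on $\btheta\mapsto\mathrm{prox}_F^\eta(\btheta)$ from first-order optimality and monotonicity of the normal cone; and finally sandwiching $F_\eta(\btheta)-F_\eta(\btheta_0)$ between two quantities that both equal $\eta^{-1}\langle\btheta-\btheta_0,\btheta_0-\mathrm{prox}_F^\eta(\btheta_0)\rangle$ up to $O(\lVert\btheta-\btheta_0\rVert^2)$. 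Every step is sound, and you correctly identify that the only non-routine point is the passage from mere $L$-smoothness to strong convexity of $h_{\btheta}$ via the condition $\eta<1/L$. Your closing remark that one may alternatively just invoke the cited lemma is exactly what the paper does.
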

%------------------------------------------------------------------------------------------
%------------------------------------------------------------------------------------------
\begin{lemma}[{\citet[][Theorem 4.5]{smoothmap}}]
\label{lem:Moreau_mapping}
If the function $\ell$ is $L$-smooth, for any real constant $\eta > 0$, the gradient mapping $G_{\eta}^{(t)}$, as defined in Equation \eqref{eq:gradient-mapping},  satisfy the following inequality
\begin{equation*}
 \cp\lVert G_{\eta}^{(t)} \cp\rVert
 \leq \left( 1 + \frac{L\eta}{L\eta+1} \right) \left( 1 + \sqrt{\frac{L\eta}{L\eta + 1}}  \right)
 \cp\lVert \nabla F_{\frac{\eta}{L\eta + 1}}\cp( \btheta^{(t)} \cp) \cp\rVert.
\end{equation*}
\end{lemma}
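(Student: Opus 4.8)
The plan is to factor the target inequality through the \emph{same} gradient mapping evaluated at the smaller step size $\bar\eta := \eta/(L\eta+1)$. Writing $t := L\bar\eta = \frac{L\eta}{L\eta+1}\in[0,1)$, note that $\bar\eta < 1/L$ (immediate, since $L\eta < L\eta+1$), so Lemma~\ref{lem:grad_E} applies and $\nabla F_{\bar\eta}(\btheta^{(t)}) = \frac1{\bar\eta}\bigl(\btheta^{(t)} - m\bigr)$ with $m := \mathrm{prox}_F^{\bar\eta}(\btheta^{(t)}) = \argmin_{x\in\Theta}\bigl\{\ell(x) + \frac1{2\bar\eta}\lVert x - \btheta^{(t)}\rVert^2\bigr\}$. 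Set $b := P_\Theta(\btheta^{(t)} - \bar\eta\nabla\ell(\btheta^{(t)}))$ and let $G_{\bar\eta}^{(t)} := \frac1{\bar\eta}(\btheta^{(t)} - b)$ be the gradient mapping at step $\bar\eta$. I would establish the two bounds $\lVert G_\eta^{(t)}\rVert \le \lVert G_{\bar\eta}^{(t)}\rVert$ and $\lVert G_{\bar\eta}^{(t)}\rVert \le (1+t)\lVert\nabla F_{\bar\eta}(\btheta^{(t)})\rVert$; since $1 \le 1+\sqrt t$ and $1+\frac{L\eta}{L\eta+1} = 1+t$, chaining them yields exactly the stated inequality (in fact with the sharper constant $1+t$).

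For the second bound I would first convert the proximal optimality condition into a projection fixed point. The first-order condition $\langle\nabla\ell(m) + \frac1{\bar\eta}(m - \btheta^{(t)}), x - m\rangle \ge 0$ for all $x\in\Theta$ is precisely the variational characterization of $m = P_\Theta\bigl(\btheta^{(t)} - \bar\eta\nabla\ell(m)\bigr)$. Comparing this with $b = P_\Theta(\btheta^{(t)} - \bar\eta\nabla\ell(\btheta^{(t)}))$, non-expansiveness (Lemma~\ref{lemma:proj}) together with $L$-smoothness gives $\lVert b - m\rVert \le \bar\eta\lVert\nabla\ell(\btheta^{(t)}) - \nabla\ell(m)\rVert \le \bar\eta L\lVert\btheta^{(t)} - m\rVert = t\lVert\btheta^{(t)} - m\rVert$. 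A triangle inequality then yields $\lVert\btheta^{(t)} - b\rVert \le (1+t)\lVert\btheta^{(t)} - m\rVert$, and dividing by $\bar\eta$ gives the claimed $\lVert G_{\bar\eta}^{(t)}\rVert \le (1+t)\lVert\nabla F_{\bar\eta}(\btheta^{(t)})\rVert$.

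The first bound --- monotonicity of the gradient-mapping norm in the step size --- is the delicate point, because $\ell$ is \emph{not} assumed convex and the monotonicity familiar from convex composite optimization is unavailable. The key observation is that $G_\eta^{(t)}$ and $G_{\bar\eta}^{(t)}$ freeze the gradient at the common base point $\btheta^{(t)}$, so the comparison reduces to the two projections $a := P_\Theta(\btheta^{(t)} - \eta\nabla\ell(\btheta^{(t)}))$ and $b$, whose convexity content lies entirely in $\Theta$. Writing $p := \btheta^{(t)} - a$, $r := \btheta^{(t)} - b$ and testing the variational inequality for $a$ at $x=b$ and the one for $b$ at $x=a$, I would obtain $\frac1\eta\langle p, p-r\rangle \le \langle\nabla\ell(\btheta^{(t)}), p-r\rangle \le \frac1{\bar\eta}\langle r, p-r\rangle$, hence
\begin{equation*}
\bar\eta\lVert p\rVert^2 + \eta\lVert r\rVert^2 \le (\eta+\bar\eta)\langle p, r\rangle \le (\eta+\bar\eta)\,\lVert p\rVert\,\lVert r\rVert .
\end{equation*}
This rearranges as $(\bar\eta\lVert p\rVert - \eta\lVert r\rVert)(\lVert p\rVert - \lVert r\rVert) \le 0$, which forces $\bar\eta\lVert p\rVert \le \eta\lVert r\rVert$ in both cases $\lVert p\rVert \ge \lVert r\rVert$ and $\lVert p\rVert < \lVert r\rVert$ (using $\bar\eta \le \eta$); that is, $\lVert G_\eta^{(t)}\rVert = \lVert p\rVert/\eta \le \lVert r\rVert/\bar\eta = \lVert G_{\bar\eta}^{(t)}\rVert$, with no appeal to convexity of $\ell$.

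I expect the main obstacle to be exactly this non-convex step-size comparison. The naive route --- bounding $\lVert\btheta^{(t)} - a\rVert$ directly by the prox displacement through the triangle inequality --- introduces an uncontrolled term $(\eta-\bar\eta)\lVert\nabla\ell(\btheta^{(t)})\rVert$ that cannot be absorbed in the constrained non-convex setting, which is why the elementary two-projection argument above is needed. The remaining points are routine bookkeeping: checking $\bar\eta < 1/L$ so that Lemma~\ref{lem:grad_E} is legitimately invoked, and noting that the degenerate case $L=0$ (where $t=0$ and $\bar\eta=\eta$) makes every inequality an equality.
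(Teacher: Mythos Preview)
The paper does not prove this lemma; it simply cites it as Theorem~4.5 of the reference \texttt{smoothmap}, so there is no in-paper argument to compare against. Your proposal is a self-contained proof and it is correct. Two minor remarks: you reuse the letter $t$ for $L\bar\eta$ while the statement already uses $t$ as the iteration index, which is confusing; and for the first-order optimality condition at $m$ you are implicitly using that $x\mapsto \ell(x)+\tfrac{1}{2\bar\eta}\lVert x-\btheta^{(t)}\rVert^2$ is (strongly) convex, which holds precisely because $\bar\eta<1/L$ --- worth making explicit. Otherwise both steps go through: the fixed-point rewriting $m=P_\Theta(\btheta^{(t)}-\bar\eta\nabla\ell(m))$ combined with non-expansiveness and $L$-smoothness gives the second bound, and your two-projection variational argument for the step-size monotonicity is clean and does not require convexity of $\ell$.

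It is worth noting that your argument in fact delivers the sharper constant $1+\tfrac{L\eta}{L\eta+1}$, strictly smaller than the $(1+\tfrac{L\eta}{L\eta+1})(1+\sqrt{\tfrac{L\eta}{L\eta+1}})$ appearing in the statement. The extra factor $1+\sqrt{L\eta/(L\eta+1)}$ in the cited theorem comes from a more general setting (weakly convex composite objectives) where the intermediate step-size comparison is done differently; in the present case --- an $L$-smooth $\ell$ plus an indicator --- your direct route avoids that loss. Either constant suffices for the downstream use in Theorem~\ref{thm:convergence}, since only the qualitative equivalence between $\lVert G_\eta^{(t)}\rVert$ and $\lVert\nabla F_{\eta/(L\eta+1)}(\btheta^{(t)})\rVert$ is needed there.
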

%------------------------------------------------------------------------------------------
%------------------------------------------------------------------------------------------

\sepline

%------------------------------------------------------------------------------------------
%------------------------------------------------------------------------------------------
A differentiable function $f:\mathbb R^d \mapsto \mathbb R$ is said to be
$\mu$-strongly-convex, $\mu\in\mathbb{R}_+^*$, if for all $\btheta, \btheta'\in\mathbb{R}^d$,
\begin{equation}\label{eq:stronglyconvex}
 f(\btheta') \geq f(\btheta) + \left\langle \nabla_{\btheta} f(\btheta), \btheta' - \btheta \right\rangle +
 \frac{\mu}{2} \lVert \btheta - \btheta' \rVert^2.
\end{equation}
%------------------------------------------------------------------------------------------
%------------------------------------------------------------------------------------------
\begin{lemma}
\label{lem:strongly}
If the function $\ell$ is $L$-smooth, then, for any real constant $\eta \in (0, 1/L)$, and any $\bar{\btheta}\in\mathbb{R}^d$, the following function is $(\eta^{-1} -L)$-strongly convex
\begin{equation*}
f : \btheta \mapsto \ell(\btheta) + \frac{1}{2\eta}\lVert \btheta - \bar{\btheta}\rVert^2.
\end{equation*}
\end{lemma}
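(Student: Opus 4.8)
The plan is to combine the standard quadratic lower bound implied by $L$-smoothness with the exact second-order expansion of the added quadratic term. First I would recall that, since $\ell$ is $L$-smooth (i.e.\ $\nabla_\btheta\ell$ is $L$-Lipschitz), one has for all $\btheta,\btheta'\in\mathbb{R}^d$ the inequality
\begin{equation*}
\ell(\btheta') \geq \ell(\btheta) + \langle \nabla_\btheta \ell(\btheta), \btheta' - \btheta\rangle - \frac{L}{2}\lVert \btheta - \btheta'\rVert^2.
\end{equation*}
This is obtained by writing $\ell(\btheta') - \ell(\btheta) - \langle\nabla_\btheta\ell(\btheta),\btheta'-\btheta\rangle = \int_0^1 \langle \nabla_\btheta\ell(\btheta + t(\btheta'-\btheta)) - \nabla_\btheta\ell(\btheta),\, \btheta'-\btheta\rangle\,\mathrm{d}t$ and bounding the integrand below using Cauchy--Schwarz together with the $L$-Lipschitz property, which yields $-\int_0^1 Lt\lVert\btheta'-\btheta\rVert^2\,\mathrm{d}t = -\tfrac{L}{2}\lVert\btheta'-\btheta\rVert^2$. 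Note this step does not require convexity of $\ell$, which is important here since in our setting $\ell$ is non-convex.

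Next I would treat the quadratic part $g(\btheta) = \frac{1}{2\eta}\lVert \btheta - \bar\btheta\rVert^2$. Being a quadratic with constant Hessian $\eta^{-1}\mathbf{I}_d$, it satisfies the exact identity
\begin{equation*}
g(\btheta') = g(\btheta) + \langle \nabla_\btheta g(\btheta), \btheta'-\btheta\rangle + \frac{1}{2\eta}\lVert \btheta - \btheta'\rVert^2 .
\end{equation*}
Adding this identity to the lower bound for $\ell$ and using $\nabla_\btheta f = \nabla_\btheta\ell + \nabla_\btheta g$ gives
\begin{equation*}
f(\btheta') \geq f(\btheta) + \langle \nabla_\btheta f(\btheta), \btheta'-\btheta\rangle + \left(\frac{1}{2\eta} - \frac{L}{2}\right)\lVert \btheta - \btheta'\rVert^2 ,
\end{equation*}
which is precisely the strong convexity inequality \eqref{eq:stronglyconvex} with modulus $\mu = \eta^{-1} - L$. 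Finally I would remark that the hypothesis $\eta \in (0,1/L)$ ensures $\eta^{-1} - L > 0$, so $\mu$ is a legitimate (positive) strong convexity constant, completing the proof.

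There is essentially no hard obstacle here: the only mildly delicate point is making sure the quadratic lower bound from $L$-smoothness is stated without a convexity assumption, which the integral argument above handles cleanly; everything else is a routine addition of inequalities.
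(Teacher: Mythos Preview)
Your proof is correct and follows essentially the same approach as the paper: both combine the $L$-smoothness lower bound $\ell(\btheta')\ge \ell(\btheta)+\langle\nabla_{\btheta}\ell(\btheta),\btheta'-\btheta\rangle-\tfrac{L}{2}\lVert\btheta'-\btheta\rVert^2$ with the exact second-order expansion of the quadratic penalty. The only cosmetic differences are that the paper cites \citet[Lemma~3.4]{bubeck2015convex} for the smoothness bound whereas you derive it via the integral representation, and the paper manipulates $f$ as a whole rather than splitting it as $\ell+g$.
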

%------------------------------------------------------------------------------------------
%------------------------------------------------------------------------------------------
\begin{proof}
Let $\btheta,\btheta' \in \mathbb{R}^d$. We have
\begin{equation}
\label{eqn:dev-f}
f(\btheta) + \left\langle \nabla f(\btheta), \btheta' - \btheta \right\rangle
=
\ell(\btheta) + \frac{1}{2 \eta} \lVert \btheta - \bar\btheta \rVert^2 + \left\langle \nabla \ell(\btheta) + \eta^{-1}(\btheta - \bar{\btheta}), \btheta' - \btheta \right\rangle.
\end{equation}
According to \citet[][Lemma 3.4]{bubeck2015convex}, since $\ell$ is $L-$smooth, it satisfies for all $\btheta,\btheta' \in \mathbb R^d$
\begin{equation}
\label{eq:sandwich}
 \left\lvert \ell(\btheta') - \ell(\btheta) - \left\langle \nabla \ell(\btheta), \btheta'-\btheta \right\rangle
 \right\rvert
\leq
\frac {L}{2} \lVert \btheta' - \btheta \rVert^2.
\end{equation}
It directly follows that
\begin{equation*}
\ell(\btheta) + \langle \nabla \ell(\btheta), \btheta' - \btheta \rangle - \frac{L}{2} \lVert \btheta - \btheta' \rVert^2 \leq \ell(\btheta').
\end{equation*}
Combining the latter inequality with Equation \eqref{eqn:dev-f} yields
\begin{dmath*}
f(\btheta) + \left\langle \nabla f(\btheta), \btheta' - \btheta \right\rangle +
\frac{\eta^{-1} - L}{2} \lVert \btheta - \btheta' \rVert^2
\leq
\ell(\btheta') + \frac{1}{2\eta}
 \left(
\lVert \btheta - \bar\btheta \rVert^2 + 2\langle \btheta - \bar\btheta, \btheta' - \btheta \rangle
+ \lVert \btheta - \btheta \rVert^2
 \right)
\end{dmath*}
For a scalar product and its associated norm, the identity $\lVert a + b \rVert^2 = \lVert a \rVert^2 + 2\langle a, b \rangle + \lVert b \rVert^2$ gives
\begin{equation*}
f(\btheta) + \left\langle \nabla f(\btheta), \btheta' - \btheta \right\rangle +
\frac{\eta^{-1} - L}{2} \lVert \btheta - \btheta' \rVert^2
\leq \ell(\btheta') + \frac{1}{2\eta}\lVert \btheta' - \bar\btheta\rVert
= f(\btheta').
\end{equation*}
Since $\eta^{-1} + L > 0$, we can conlude that $f$ is $(\eta^{-1} + L )$-strongly convex.

\end{proof}
%------------------------------------------------------------------------------------------
%------------------------------------------------------------------------------------------

\sepline

%------------------------------------------------------------------------------------------
%------------------------------------------------------------------------------------------
\begin{lemma}
\label{lem:convenient_bartat}
If the function $\ell$ is $L$-smooth, then for any $\btheta \in \Theta$, and any real constant $\eta \in \big(0, (2\Gamma + L)^{-1}\big]$, with $\Gamma = \sup_{\btheta \in \Theta} \lVert \nabla_{\btheta} \ell(\btheta) \rVert$,
we have
\begin{equation*}
\lVert \btheta - \mathrm{prox}_{F}^{\eta}(\btheta) \rVert \leq 1.
\end{equation*}
\end{lemma}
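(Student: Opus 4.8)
The plan is to compare the proximal point with the base point $\btheta$ itself and then control the resulting decrease of $\ell$ by $L$-smoothness; the whole argument is then essentially bookkeeping.

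First I would unfold the definition of the proximal operator. Writing $\btheta^{+} = \mathrm{prox}_{F}^{\eta}(\btheta)$ and recalling $F = \ell + I_{\Theta}$, the minimization defining the prox is effectively over $\btheta'\in\Theta$; since $\ell$ is continuous (being $L$-smooth) and $\Theta$ is compact, the minimum is attained and $\btheta^{+}\in\Theta$. Because $\btheta\in\Theta$, evaluating the proximal objective $\btheta'\mapsto \ell(\btheta')+\frac{1}{2\eta}\lVert\btheta-\btheta'\rVert^2$ at $\btheta'=\btheta$ gives the value $\ell(\btheta)$, so optimality of $\btheta^{+}$ yields
\begin{equation*}
\ell(\btheta^{+}) + \tfrac{1}{2\eta}\lVert\btheta-\btheta^{+}\rVert^2 \le \ell(\btheta),
\qquad\text{hence}\qquad
\tfrac{1}{2\eta}\lVert\btheta-\btheta^{+}\rVert^2 \le \ell(\btheta)-\ell(\btheta^{+}).
\end{equation*}

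Next I would bound the right-hand side. Since $\btheta,\btheta^{+}\in\Theta$, the $L$-smoothness inequality \eqref{eq:sandwich} applied with base point $\btheta^{+}$ gives $\ell(\btheta)-\ell(\btheta^{+}) \le \langle\nabla\ell(\btheta^{+}),\btheta-\btheta^{+}\rangle + \tfrac{L}{2}\lVert\btheta-\btheta^{+}\rVert^2$, and Cauchy--Schwarz together with $\lVert\nabla\ell(\btheta^{+})\rVert\le\Gamma$ (valid because $\btheta^{+}\in\Theta$) turns this into $\ell(\btheta)-\ell(\btheta^{+})\le\Gamma\lVert\btheta-\btheta^{+}\rVert+\tfrac{L}{2}\lVert\btheta-\btheta^{+}\rVert^2$. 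Combining with the previous display and setting $r=\lVert\btheta-\btheta^{+}\rVert$ leaves $\tfrac{r^2}{2\eta}\le\Gamma r+\tfrac{L}{2}r^2$.

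To conclude: if $r=0$ there is nothing to prove; otherwise I would divide by $r>0$ to get $r\,\tfrac{1-L\eta}{2\eta}\le\Gamma$, then rewrite the hypothesis $\eta\le(2\Gamma+L)^{-1}$ as $2\Gamma\eta+L\eta\le1$, i.e. $\Gamma\le\tfrac{1-L\eta}{2\eta}$, so that $r\,\tfrac{1-L\eta}{2\eta}\le\tfrac{1-L\eta}{2\eta}$ and hence $r\le1$. The only point needing care — and the only mild obstacle here — is the positivity of the factor $\tfrac{1-L\eta}{2\eta}$ before dividing: $2\Gamma\eta+L\eta\le1$ forces $L\eta\le1$, with equality only when $\Gamma=0$, and in that degenerate case $\nabla\ell$ vanishes on the convex set $\Theta$, so $\ell$ is constant there, $\ell(\btheta)=\ell(\btheta^{+})$, and the first display already forces $r=0\le1$.
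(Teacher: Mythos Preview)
Your proof is correct and uses the same ingredients as the paper's—the $L$-smoothness descent inequality, Cauchy--Schwarz with the bound $\Gamma$, and the hypothesis $\eta\le(2\Gamma+L)^{-1}$. The only presentational difference is that you argue directly by bounding $r=\lVert\btheta-\btheta^{+}\rVert$ and dividing through by $r$, whereas the paper argues by contrapositive: it fixes any $\btheta'$ with $\lVert\btheta-\btheta'\rVert>1$, uses that strict inequality to upgrade $\Gamma\lVert\btheta-\btheta'\rVert$ to $\Gamma\lVert\btheta-\btheta'\rVert^{2}$, and compares coefficients of $\lVert\btheta-\btheta'\rVert^{2}$ to conclude $\btheta'\neq\mathrm{prox}_{F}^{\eta}(\btheta)$. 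That trick lets the paper avoid the $\Gamma=0$ edge case you handle separately; otherwise the two arguments are equivalent.
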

%------------------------------------------------------------------------------------------
%------------------------------------------------------------------------------------------
\begin{proof}
Given $\btheta \in \Theta$, we show that any point at a distance more than one from $\btheta$ is not $\mathrm{prox}_{F}^{\eta}(\btheta)$. This implies that necessarily
\begin{equation*}
\lVert \btheta - \mathrm{prox}_{F}^{\eta}(\btheta) \rVert \leq 1.
\end{equation*}
Let $\btheta'\in\mathbb{R}^d$ be a point such that $\lVert \btheta - \btheta' \rVert > 1$.
According to \citet[][Lemma 3.4]{bubeck2015convex}, since $\ell$ is $L-$smooth, it satisfies for all $\btheta,\btheta' \in \mathbb R^d$
\begin{equation}
\label{eq:descent}
\ell(\btheta) \leq \ell(\btheta') + \langle \nabla \ell(\btheta'), \btheta - \btheta' \rangle + \frac{L}{2} \lVert \btheta' - \btheta \rVert^2.
\end{equation}
Let first assume that $\btheta' \in \Theta$.
Using successively the Cauchy--Schwarz inequality, the positive bound $\Gamma$ and the assumption $\lVert \btheta - \btheta' \rVert > 1$, we get
\begin{equation}
\label{eqn:cs-bound}
\langle \nabla \ell(\btheta'), \btheta - \btheta' \rangle \leq \lVert \nabla \ell(\btheta') \rVert \lVert \btheta - \btheta' \rVert
< \Gamma \lVert \btheta - \btheta' \rVert^2.
\end{equation}
It results from Equation \eqref{eq:descent} and Equation \eqref{eqn:cs-bound} that
\begin{equation*}
\ell(\btheta) < \ell(\btheta') + \left( \Gamma + \frac{L}{2} \right) \lVert \btheta' - \btheta \rVert^2.
\end{equation*}
For $\eta \leq (2\Gamma + L)^{-1}$, we have $(\Gamma + L/2) \leq 1/(2\eta)$. Moreover, the functions $F$ and $\ell$ coincide on $\Theta$, so that
\begin{equation*}
F(\btheta) < F(\btheta') + \frac{1}{2\eta} \lVert \btheta' - \btheta \rVert^2.
\end{equation*}
We can therefore conclude that $\btheta' \neq \mathrm{prox}_{F}^{\eta}(\btheta)$.
Conversely, if $\btheta' \notin \Theta$, by definition of $F$, we also have that $\btheta' \neq \mathrm{prox}_{F}^{\eta}(\btheta)$.

\end{proof}
%------------------------------------------------------------------------------------------
%------------------------------------------------------------------------------------------

\sepline

%------------------------------------------------------------------------------------------
%------------------------------------------------------------------------------------------
\lemcontrol*
%------------------------------------------------------------------------------------------
%------------------------------------------------------------------------------------------
\begin{proof}
Given $t\in\mathbb{N}^*$ and $\eta \in \big(0, \max\{2\Gamma + L, 2L\}^{-1}\big]$, we define the virtual iterates $\bar\btheta^{(t)}$ as
\begin{equation*}
\bar\btheta^{(t)} = \mathrm{prox}_{F}^{\eta}\cp( \btheta^{(t)} \cp)
= \argmin_{\btheta \in \mathbb{R}^d} \left\{
F(\btheta)+\frac{1}{2 \eta} \cp\lVert \btheta - \btheta^{(t)} \cp\rVert^2
 \right\} .
\end{equation*}
Note that $\bar\btheta^{(t)} \in \Theta$. Indeed, according to the definition of the Moreau envelope
\begin{equation*}
F \left( \bar\btheta^{(t)} \right)
= F_{\eta}\cp( \btheta^{(t)} \cp) - \frac{1}{2\eta} \left\lVert \bar\btheta^{(t)} - \btheta^{(t)} \right\rVert^2
\leq F\cp( \btheta^{(t)} \cp).
\end{equation*}
The upper bound is finite, since $\btheta^{(t)}$ as defined in \eqref{eqn:proj-sgd} is within the compact $\Theta$.
The operator $F$ is thus finite at $\bar\btheta^{(t)}$, and consequently
$\bar\btheta^{(t)} \in \Theta$ since $F$ takes infinite value outside of $\Theta$.

As the virtual iterates are within $\Theta$, the definition of the Moreau envelope yields
\begin{equation}
\label{eqn:env-update}
F_\eta\cp( \btheta^{(t+1)} \cp)
%= \ell \left( \bar\btheta^{(t+1)} \right) + \frac{1}{2 \eta} \left\lVert \btheta^{(t+1)} - \bar\btheta^{(t+1)} \right\rVert^2
\leq \ell \left( \bar\btheta^{(t)} \right) + \frac{1}{2 \eta} \left\lVert \btheta^{(t+1)} - \bar\btheta^{(t)} \right\lVert^2.
\end{equation}
Using the definition of $\btheta^{(t+1)}$ and the fact that $\bar\btheta^{(t)} \in \Theta$, we obtain from Lemma \ref{lemma:proj}
\begin{align*}
\frac{1}{2 \eta} \left\lVert \btheta^{(t+1)} - \bar\btheta^{(t)} \right\rVert^2
& = \frac{1}{2 \eta} \left\lVert P_{\Theta}\cp( \btheta^{(t)} - \gamma\widehat{g}^{(t)} \cp) - P_{\Theta} \left( \bar\btheta^{(t)} \right) \right\rVert^2
\\
& \leq
\frac{1}{2 \eta}  \left\lVert \btheta^{(t)} - \gamma\widehat{g}^{(t)} - \bar\btheta^{(t)} \right\rVert^2.
\end{align*}
The bilinearity of the scalar product gives
\begin{equation*}
 \left\lVert \btheta^{(t)} - \gamma\widehat{g}^{(t)} - \bar\btheta^{(t)} \right\rVert^2
= \left\lVert \btheta^{(t)} - \bar\btheta^{(t)} \right\rVert^2
+ 2 \gamma \left\langle \bar\btheta^{(t)} - \btheta^{(t)}, \widehat{g}^{(t)} \right\rangle
+ \gamma^2\cp\lVert \widehat{g}^{(t)} \cp\rVert^2,
\end{equation*}
and the inequality \eqref{eqn:env-update} becomes
\begin{align*}
F_\eta\cp( \btheta^{(t+1)} \cp)
& \leq \ell \left( \bar\btheta^{(t)} \right) + \frac{1}{2\eta}
 \left\lVert \btheta^{(t)} - \bar\btheta^{(t)} \right\rVert^2
+ \frac{\gamma}{\eta} \left\langle \bar\btheta^{(t)} - \btheta^{(t)}, \widehat{g}^{(t)} \right\rangle
+ \frac{\gamma^2}{2\eta} \cp\lVert \widehat{g}^{(t)} \cp\rVert^2
\\
& \leq F_{\eta}\cp( \btheta^{(t)} \cp)
+ \frac{\gamma}{\eta} \left\langle \bar\btheta^{(t)} - \btheta^{(t)}, \widehat{g}^{(t)} \right\rangle
+ \frac{\gamma^2}{2\eta} \cp\lVert \widehat{g}^{(t)} \cp\rVert^2.
\end{align*}
Therefore,
\begin{dmath*}
\mathbb{E} \left[
F_\eta\cp( \btheta^{(t+1)} \cp) \mid \btheta^{(t)}
 \right]
\leq F_{\eta}\cp( \btheta^{(t)} \cp)
+ \frac{\gamma}{\eta} \left\langle \bar\btheta^{(t)} - \btheta^{(t)}, \mathbb{E} \left[ \widehat{g}^{(t)} \mid \btheta^{(t)} \right] \right\rangle
+ \frac{\gamma^2}{2\eta} \mathbb{E} \left[ \cp\lVert \widehat{g}^{(t)} \cp\rVert^2 \mid \btheta^{(t)} \right].
\end{dmath*}
We now study more in detail the second and the third terms of the right-hand
side.

Regarding the second term, we have
\begin{equation*}
 \left\langle \bar\btheta^{(t)} - \btheta^{(t)}, \mathbb{E} \left[ \widehat{g}^{(t)} \mid \btheta^{(t)} \right] \right\rangle
=
 \left\langle \bar\btheta^{(t)} - \btheta^{(t)}, \nabla_{\btheta} \ell\cp( \btheta^{(t)} \cp) \right\rangle
+ \left\langle \bar\btheta^{(t)} - \btheta^{(t)}, b^{(t)} \right\rangle,
\end{equation*}
where $b^{(t)}$ stands for the bias of the gradient estimate, namely
\begin{equation*}
b^{(t)} = \mathbb{E} \left[ \widehat{g}^{(t)} \mid \btheta^{(t)} \right] - \nabla_{\btheta} \ell\cp( \btheta^{(t)} \cp).
\end{equation*}
By combining the Cauchy--Schwarz inequality with Lemma \ref{lem:convenient_bartat}, which applies since $\eta \leq (2\Gamma + L)^{-1}$, we get
\begin{equation*}
 \left\langle \bar\btheta^{(t)} - \btheta^{(t)}, b^{(t)} \right\rangle
\leq \left\lVert \bar\btheta^{(t)} - \btheta^{(t)} \right\rVert \cp\lVert b^{(t)} \cp\rVert
\leq \cp\lVert b^{(t)} \cp\rVert.
\end{equation*}
Since the function $\ell$ is $L$-smooth, we can use the inequality \eqref{eq:sandwich}, namely
\begin{equation*}
 \left\langle \bar\btheta^{(t)} - \btheta^{(t)},\nabla_{\btheta} \ell\cp( \btheta^{(t)} \cp) \right\rangle
\leq \ell \left( \bar\btheta^{(t)} \right) - \ell\cp( \btheta^{(t)} \cp)  + \frac{L}{2} \left\lVert \bar\btheta^{(t)} - \btheta^{(t)} \right\rVert^2.
\end{equation*}
%In conclusion, since $\ell$ and $F$ coincides on $\Theta$, and both $\bar\btheta^{(t)}, \btheta^{(t)}\in\Theta$,
In conclusion we obtain
\begin{equation}
\label{eqn:term-1}
 \left\langle \bar\btheta^{(t)} - \btheta^{(t)}, \mathbb{E} \left[ \widehat{g}^{(t)} \mid \btheta^{(t)} \right] \right\rangle
\leq
 \ell \left( \bar\btheta^{(t)} \right) - \ell\cp( \btheta^{(t)} \cp)  + \frac{L}{2} \left\lVert \bar\btheta^{(t)} - \btheta^{(t)} \right\rVert^2 + \cp\lVert b^{(t)} \cp\rVert.
\end{equation}
We aim at obtaining a bound not depending on the virtual iterates. As
$\eta^{-1} \geq 2L > L$, Lemma \ref{lem:strongly} applies, and the function
\begin{equation*}
g : \btheta \mapsto \ell(\btheta) + \frac{1}{2\eta} \cp\lVert \btheta - \btheta^{(t)}\cp\rVert^2
\end{equation*}
is $(\eta^{-1} - L)$-strongly convex. By definition, this function achieves a minimum at $\bar\btheta^{(t)}$. Consequently its gradient is 0 at that point and the strong-convexity gives
\begin{equation*}
\ell\cp( \btheta^{(t)} \cp) \geq \ell \left( \bar\btheta^{(t)} \right) + \frac{1}{2\eta} \left\lVert \bar\btheta^{(t)} - \btheta^{(t)} \right\rVert^2
+ \frac{\eta^{-1} - L}{2} \left\lVert \btheta^{(t)} - \bar\btheta^{(t)} \right\rVert^{2}.
\end{equation*}
Using this last inequality, Equation \eqref{eqn:term-1} becomes
\begin{align*}
 \left\langle \bar\btheta^{(t)} - \btheta^{(t)}, \mathbb{E} \left[ \widehat{g}^{(t)} \mid \btheta^{(t)} \right] \right\rangle
 \leq &
- \frac{1}{2\eta} \left\lVert \bar\btheta^{(t)} - \btheta^{(t)} \right\rVert^2 - \frac{\eta^{-1} -2L}{2} \left\lVert \bar\btheta^{(t)} - \btheta^{(t)} \right\rVert^2 \\
& + \cp\lVert b^{(t)}  \cp\rVert \\
 \leq &- \frac{1}{2\eta} \left\lVert \bar\btheta^{(t)} - \btheta^{(t)} \right\rVert^2 + \cp\lVert b^{(t)}\cp\rVert,
\end{align*}
since $\eta^{-1} \geq 2L$. Finally, using Lemma \ref{lem:grad_E} leads to
\begin{equation*}
\frac{1}{2 \eta} \left\lVert \bar\btheta^{(t)} - \btheta^{(t)} \right\rVert^2
=  \frac{1}{2 \eta} \cp\lVert \eta \nabla F_{\eta}\cp( \btheta^{(t)} \cp) \cp\rVert^2 = \frac{\eta}{2} \cp\lVert \nabla F_{\eta}\cp( \btheta^{(t)} \cp) \cp\rVert^2.
\end{equation*}
So far, we have hence proven that
\begin{dmath*}
\mathbb{E} \left[
F_\eta \cp( \btheta^{(t+1)} \cp) \mid \btheta^{(t)}
 \right]
\leq F_{\eta} \cp( \btheta^{(t)} \cp)
- \frac{\gamma}{2} \cp\lVert \nabla F_{\eta}\cp( \btheta^{(t)} \cp) \cp\rVert^2
+ \frac{\gamma}{\eta} \cp\lVert b^{(t)} \cp\rVert
+ \frac{\gamma^2}{2\eta} \mathbb{E} \left[ \cp\lVert \widehat{g}^{(t)} \cp\rVert^2 \mid \btheta^{(t)} \right].
\end{dmath*}
We now focus on the third term. The bilinearity of the scalar product gives
\begin{align*}
\mathbb{E} \left[ \cp\lVert \widehat{g}^{(t)} \cp\rVert^2 \mid \btheta^{(t)} \right]
& = \sigma^{(t)}
- \cp\lVert \nabla_{\btheta} \ell \cp( \btheta^{(t)} \cp) \cp\rVert^2
+ 2 \left\langle \mathbb{E} \left[ \widehat{g}^{(t)} \mid \btheta^{(t)} \right], \nabla_{\btheta} \ell \cp( \btheta^{(t)} \cp) \right\rangle
\\
& = \sigma^{(t)}
+ \cp\lVert \nabla_{\btheta} \ell \cp( \btheta^{(t)} \cp) \cp\rVert^2
+ 2 \cp\langle b^{(t)}, \nabla_{\btheta} \ell \cp( \btheta^{(t)} \cp) \cp\rangle.
\end{align*}
The Cauchy-Schwarz inequality along with the definition of the bound $\Gamma$ yields
\begin{equation*}
\mathbb{E} \left[ \cp\lVert \widehat{g}^{(t)} \cp\rVert^2 \mid \btheta^{(t)} \right]
\leq \sigma^{(t)} + \Gamma^2 + 2 \Gamma \cp\lVert b^{(t)} \cp\rVert.
\end{equation*}
It follows that
\begin{dmath*}
\mathbb{E} \left[
F_\eta \cp( \btheta^{(t+1)} \cp) \mid \btheta^{(t)}
 \right]
\leq F_{\eta} \cp( \btheta^{(t)} \cp)
- \frac{\gamma}{2} \cp\lVert \nabla F_{\eta}\cp( \btheta^{(t)} \cp) \cp\rVert^2
+ \frac{\gamma + \gamma^2\Gamma}{\eta} \cp\lVert b^{(t)} \cp\rVert
+ \frac{\gamma^2}{2\eta} \cp( \sigma^{(t)} + \Gamma^2 \cp).
\end{dmath*}

%\JScom{Now, using $\Esp{\lVert X \rVert^2} = \Esp{\lVert X  - \Esp{X}\rVert^2} + \lVert\Esp{ X }\rVert^2$  on the last expectation gives
%    \begin{align*}
%        \mathbb{E}_t \left[ \left\|\gt \right\|_2^2 \right]  &  = \mathbb{E}_t \left[ \left\|\gt - \Espt{\gt} \right\|_2^2 \right] + \lVert \Espt{\gt} \rVert^2 \\
%                                                         & = \mathbb{E}_t \left[ \left\|\gt - \Espt{\gt} \right\|_2^2 \right] + \lVert \nabla \ell \left( \tat \right) + \bt \rVert^2 \\
%                                                         & \leq \mathbb{E}_t \left[ \left\|\gt - \Espt{\gt} \right\|_2^2 \right] + 2  \lVert \nabla \ell \left( \tat \right)\rVert^2 + 2 \lVert \bt \rVert^2 \\
%                                                         & \leq \sigmat + 2 \Gamma^2 + 2 \xit
%    \end{align*}
% where we used $\lVert a + b\rVert^2 \leq 2 \lVert a \rVert^2 + 2 \lVert b \rVert^2$ at the penultimate and the last line is true by assumption. ON NE PEUT PAS CONCLURE AINSI CAR DANS $\sigma^{(t)}$, $\Espt{\gt} \neq \nabla \ell(\btheta^{(t)})$.}
%Putting everything together gives
%\begin{align*}
%\Espt{F_\eta \left( \tatp \right)} & \leq F_\eta \left( \tat \right)-\frac{\gamma}{2} \left\|\nabla F_\eta \left( \tat \right) \right\|_2^2 + \frac \gamma \eta \lVert \bt \rVert +\frac{\gamma^2 \left( \Gamma^2 +  \xit + \frac{\sigmat}2 \right)}{\eta}.
%\end{align*}
%Replacing $\lVert \bt \rVert$ by $\sqrt{\xit}$ finishes the proof.
\end{proof}

\sepline

%------------------------------------------------------------------------------------------
%------------------------------------------------------------------------------------------
\section{Proof of Theorem \ref{thm:convergence}}
\label{sec:app-convergence}
%------------------------------------------------------------------------------------------
%------------------------------------------------------------------------------------------

%------------------------------------------------------------------------------------------
%------------------------------------------------------------------------------------------
\propagapiou*
%------------------------------------------------------------------------------------------
%------------------------------------------------------------------------------------------
\begin{proof}
    For any $t\in\mathbb{N}$, since the individual $i(t)$ is drawn uniformly independently of $\btheta^{(t)}$, we have
\begin{align*}
\overline{\sigma}_{\rm IS}^{(t)}
& = \mathbb{E} \left[
\mathbb{E} \left[
 \left\lVert
\widehat{s}^{N}_{i(t)} - \nabla \log p_{\btheta^{(t)}} \left( \bY_{i(t)} \right)
 \right\rVert^2
\mid \btheta^{(t)}, i(t)
 \right]
 \right]
\\
& = \frac{1}{n}\sum_{i = 1}^n \mathbb{E} \left[
 \left\lVert
\widehat{s}^{N}_{i(t)} - \nabla \log p_{\btheta^{(t)}} \left( \bY_{i(t)} \right)
 \right\rVert^2
\mid \btheta^{(t)}, i(t) = i
 \right]
\\
& = \frac{1}{n}\sum_{i = 1}^n \mathbb{E}_{\nu^{\otimes N}_i(\cdot\,;\btheta^{(t)})} \left[
 \left\lVert
\widehat{s}^{N}_{i} - \nabla \log p_{\btheta^{(t)}} \left( \bY_{i} \right)
 \right\rVert^2
 \right]
\end{align*}
and, similarly
\begin{align*}
\overline{\xi}_{\rm IS}^{(t)}
& = \frac{1}{n}\sum_{i = 1}^n \left\lVert
\mathbb{E} \left[
\widehat{s}_{i(t)}^{N} -  \nabla \log p_{\btheta^{(t)}} \left( \bY_{i(t)} \right) \mid \btheta^{(t)}, i(t) = i
 \right]
 \right\rVert
\\
& = \frac{1}{n}\sum_{i = 1}^n  \left\lVert
\mathbb{E}_{\nu^{\otimes N}_i(\cdot\,;\btheta^{(t)})} \left[
\widehat{s}_{i}^{N} -  \nabla \log p_{\btheta^{(t)}} \left( \bY_{i} \right)
 \right]
 \right\rVert
\end{align*}
Consider an individual $i\in\{1, \ldots, n\}$ and a parameter $\btheta\in\Theta$.
As in \cite{agapiou2017}, the $r$-th central moment, $r\in\mathbb{N}^*$, of a mesurable function $h:\mathbb{R}^q \rightarrow\mathbb{R}$ with respect to the proposal distribution is denoted
\begin{equation*}
m_r[h] = \mathbb{E}_{\nu_i(\cdot\, ; \btheta)} \left[
 \left\lvert
h(\bV) - \mathbb{E}_{\nu_i(\cdot\, ; \btheta)} \left[ h(\bV) \right]
 \right\rvert^r
 \right].
\end{equation*}
We also denote $\phi_k$, $k\in\{1, \ldots, d\}$, the application that returns the $k$-th component of the score, namely for all $\bV\in\mathbb{R}^q$
\begin{equation*}
\phi_k(\bV) = \frac{\partial}{\partial \theta_k} \log p_{\btheta}(\bY_i, \bV).
\end{equation*}
For a $N$-sample $(\bV_1, \ldots, \bV_N)$ from $\nu_i(\cdot\, ; \btheta)$, the importance sampling estimate $\widehat{s}^N_i(\btheta) = (\widehat{s}_{ik})_{1\leq k \leq d}$ of the score $g_i$, as defined in Equation \eqref{eqn:score-marginal}, can thereby be written as
\begin{equation*}
 \widehat{s}_{ik} = \frac{1}{\sum_{s = 1}^N \rho_{\btheta, i}(\bV_s)}\sum_{r = 1}^N \rho_{\btheta, i}(\bV_r) \phi_k(\bV_r).
\end{equation*}
According to Theorem 2.3 in \cite{agapiou2017}, we can guarantee control over the bias and mean squared error of
the importance sampling estimate for the $k$-th component of the score, $k\in\{1, \ldots, d\}$, provided that the following quantity is finite
\begin{dmath*}
C_{\mathrm{MSE}, k}(\btheta)
= \frac{3}{p_{\btheta}(\bY_i)^2}
 \left\lbrace
m_2[\phi_k\rho_{\btheta, i}]
+ \frac{9}{p_{\btheta}(\bY_i)^2} \sqrt{
m_4[\rho_{\btheta, i}] \mathbb{E}_{\nu_i(\cdot\,;\btheta)} \left[ \left\lvert \phi_k(\bV)\rho_{\btheta, i}(\bV) \right\rvert^4 \right]
}
+ \frac{125}{p_{\btheta}(\bY_i)} \sqrt{
m_6[\rho_{\btheta, i}] \mathbb{E}_{\nu_i(\cdot\,;\btheta)} \left[ \left\lvert \phi_k(\bV) \right\rvert^4 \right]
}
 \right\rbrace.
\end{dmath*}
%where $C_4$ and $C_6$ are positive constants such that $C_4 \leq 3^4$ and $C_6 \leq 5^6$.
To prove such a statement, first note that by definition
\begin{equation}
\label{eqn:proof-cmse-1}
0 <  \lambda_i,
\qquad
\mathbb{E}_{\nu_i(\cdot\,;\btheta)} \left[ \left\lvert \phi_k(\bV) \right\rvert^4 \right] \leq \beta_i.
\end{equation}
The monotonicity of the expected value thus yields
\begin{equation}
\label{eqn:proof-cmse-2}
\mathbb{E}_{\nu_i(\cdot\,;\btheta)} \left[ \left\lvert \phi_k(\bV)\rho_{\btheta, i}(\bV) \right\rvert^4 \right]
\leq
\lambda_i^4 \mathbb{E}_{\nu_i(\cdot\,;\btheta)} \left[ \left\lvert \phi_k(\bV) \right\rvert^4 \right]
\leq \lambda_i^4\beta_i.
\end{equation}
Furthermore, we use that for all $r\in\mathbb{N}^*$, $\mathbb{E}[\lvert X - \mathbb{E}[X]\rvert^r] \leq 2^r\mathbb{E}[\lvert X \rvert^r]$ (this is a direct consequence of the Minkowski and Jensen inequalities).
Then, we have for all $r\in\mathbb{N}^*$
\begin{equation}
\label{eqn:proof-cmse-3}
\begin{aligned}
m_r[\rho_{\btheta, i}]
& \leq 2^r \mathbb{E}_{\nu_i(\cdot\, ; \btheta)} \left[
 \rho_{\btheta, i}(\bV)^r
 \right]
\leq 2^r\lambda_i^r,
\\
m_2[\phi_k\rho_{\btheta, i}]
& \leq 4\mathbb{E}_{\nu_i(\cdot\,;\btheta)} \left[ \left\lvert \phi_k(\bV)\rho_{\btheta, i}(\bV) \right\rvert^2 \right]
\leq
4\lambda_i^2 \mathbb{E}_{\nu_i(\cdot\,;\btheta)} \left[ \left\lvert \phi_k(\bV) \right\rvert^2 \right].
\end{aligned}
\end{equation}
Finally, the Jensen inequality for the square root function provides
that
\begin{equation}
\label{eqn:proof-cmse-4}
\mathbb{E}_{\nu_i(\cdot\,;\btheta)} \left[ \left\lvert \phi_k(\bV) \right\rvert^2 \right] \leq \sqrt{\mathbb{E}_{\nu_i(\cdot\,;\btheta)} \left[ \left\lvert \phi_k(\bV) \right\rvert^4 \right]} \leq \sqrt{\beta_i}
\end{equation}
Combining the upper-bounds from Equations \eqref{eqn:proof-cmse-1}--\eqref{eqn:proof-cmse-4} leads to
\begin{equation*}
C_{\mathrm{MSE}, k}(\btheta)
\leq \frac{12\lambda_i^2\sqrt{\beta}_i}{p_{\btheta}(\bY_i)^2}
 \left\lbrace
1 + \frac{9\lambda_i^2}{p_{\btheta}(\bY_i)^2} + \frac{250\lambda_i}{p_{\btheta}(\bY_i)}
 \right\rbrace.
\end{equation*}
Since $\lambda_i$ and $\beta_i$ are all finite constants, and for any $\btheta\in\mathbb{R}^d$ and any $\bY_i\in\mathbb{N}$, $p_{\btheta}(\bY_i)$ is positive, it follows that $C_{\mathrm{MSE}, k}(\btheta)$ is finite. Consequently,
 Theorem 2.3 in \cite{agapiou2017} states that
\begin{align*}
\mathbb{E}_{\nu^{\otimes N}_i(\cdot\, ; \btheta)} \left[
 \left\lbrace
\widehat{s}_{ik} - \frac{\partial}{\partial \theta_k} \log p_{\btheta}(\bY_i)
 \right\rbrace^2
 \right]
& \leq \frac{1}{N}C_{\mathrm{MSE}, k}(\btheta),
\\
 \left\lvert
\mathbb{E}_{\nu^{\otimes N}_i(\cdot\, ; \btheta)} \left[ \widehat{s}_{ik} \right]  - \frac{\partial}{\partial \theta_k} \log p_{\btheta}(\bY_i)
 \right\rvert
& \leq \frac{2}{Np_{\btheta}(\bY_i)}
\Bigg\lbrace
\frac{1}{p_{\btheta}(\bY_i)}\sqrt{m_2[\rho_{\btheta, i}] m_2[\overline{\phi_k} \rho_{\btheta, i}]}
\\
& \qquad\quad
+ \sqrt{C_{\mathrm{MSE}, k}(\btheta) \mathbb{E}_{\nu_i(\cdot\,;\btheta)} \left[ \rho_{\btheta, i}(\bV)^2 \right]}
\Bigg\rbrace,
\end{align*}
where $\overline{\phi_k} : \bV \mapsto \phi_k(\bV) - \mathbb{E}_{\nu_i(\cdot\,;\btheta)} \left[ \phi_k(\mathbf{U}) \right]$.
To conclude, we further need to eliminate the dependence in $\btheta$ in these upper bounds.
For any $\bY_i\in\mathbb{N}^p$, the function $\btheta \mapsto p_{\btheta}(\bY_i)$ is bounded below by $\zeta_i > 0$ on $\Theta$ (Proposition \ref{prop:smoothness}).
Moreover, combining the argument of Equation \eqref{eqn:proof-cmse-3} with the positivity of the variance and Equation \eqref{eqn:proof-cmse-4}, we have
\begin{equation*}
m_2[\overline{\phi_k} \rho_{\btheta, i}]
\leq 4\lambda_i^2 \mathbb{E}_{\nu_i(\cdot\,;\btheta)} \left[ \left\lvert \overline{\phi_k}(\bV) \right\rvert^2 \right]
\leq 4\lambda_i^2 \mathbb{E}_{\nu_i(\cdot\,;\btheta)} \left[ \left\lvert \phi_k(\bV) \right\rvert^2 \right]
\leq 4\lambda_i^2 \sqrt{\beta_i}.
\end{equation*}
Overall, these last two bounds in addition with Equations \eqref{eqn:proof-cmse-1}--\eqref{eqn:proof-cmse-4} yield the conclusion, namely
\begin{align*}
\overline{\sigma}_{\rm IS}^{(t)}
& = \frac{1}{n}\sum_{i=1}^n \sum_{k = 1}^d \mathbb{E}_{\nu^{\otimes N}_i(\cdot\, ; \btheta)} \left[
 \left\lbrace
\widehat{s}_{ik} - \frac{\partial}{\partial \theta_k} \log p_{\btheta}(\bY_i)
 \right\rbrace^2
 \right]
\\
& \leq
\frac{12d}{nN}\sum_{i = 1}^n \frac{\lambda_i^2\sqrt{\beta}_i}{\zeta_i^2}
 \left(
1 + \frac{250\lambda_i}{\zeta_i} + \frac{9\lambda_i^2}{\zeta_i^2}
 \right),
\end{align*}
and
\begin{align*}
\overline{\xi}_{\rm IS}^{(t)} & = \frac{1}{n}\sum_{i = 1}^n
\sqrt{
\sum_{k = 1}^d \left\lvert
\mathbb{E}_{\nu^{\otimes N}_i(\cdot\, ; \btheta)} \left[ \widehat{s}_{ik} \right]  - \frac{\partial}{\partial \theta_k} \log p_{\btheta}(\bY_i)
 \right\rvert^2
}
\\
& \leq \frac{1}{n} \sum_{i = 1}^n
\frac{2\sqrt{d}}{N\zeta_i}
 \left\lbrace
\frac{1}{\zeta_i} 4\lambda_i^2 \beta_i^{1/4}
+ \sqrt{\frac{12\lambda_i^4\sqrt{\beta}_i}{\zeta_i^2}
 \left(
1 + \frac{250\lambda_i}{\zeta_i} + \frac{9\lambda_i^2}{\zeta_i^2}
 \right)
}
 \right\rbrace
\\
& \leq \frac{4\sqrt{d}}{nN}
\sum_{i = 1}^n
\frac{\lambda_i^2\beta_i^{1/4}}{\zeta_i^2}
 \left\lbrace
2
+ \sqrt{3
 \left(
1 + \frac{250\lambda_i}{\zeta_i} + \frac{9\lambda_i^2}{\zeta_i^2}
 \right)
}
 \right\rbrace.
\end{align*}
\end{proof}
%------------------------------------------------------------------------------------------
%------------------------------------------------------------------------------------------

\sepline

%------------------------------------------------------------------------------------------
%------------------------------------------------------------------------------------------
\thmconvergence*
%------------------------------------------------------------------------------------------
%------------------------------------------------------------------------------------------
\begin{proof}
According to Proposition \ref{prop:smoothness}, the function $\ell$ is $L$-smooth. Using successively that the square function and the expectation are increasing on $\mathbb{R}_{+}$, for any real constant $\eta > 0$, Lemma \ref{lem:Moreau_mapping} yields
\begin{equation}
\label{eqn:proof-thm-1}
\mathbb{E} \left[
 \cp\lVert G_{\eta}^{(t)} \cp\rVert^2
 \right]
\leq \left( 1 + \frac{L\eta}{L\eta+1} \right)^2 \left( 1 + \sqrt{\frac{L\eta}{L\eta + 1}}  \right)^2
\mathbb{E} \left[
 \cp\lVert \nabla F_{\frac{\eta}{L\eta + 1}}\cp( \btheta^{(t)} \cp) \cp\rVert^2
 \right].
\end{equation}
To elaborate on the upper bound, we aim at using Lemma \ref{lem:control}.
Therefore, we first need to prove that we have control over the bias $\xi^{(t)}$ and the mean squared error $\sigma^{(t)}$ of the gradient estimates, as defined in Lemma \ref{lem:control}.

Let $t \in \mathbb{N}^*$. Due to the bilinearity of the scalar product, the following identity holds:
\begin{align*}
\sigma^{(t)} & =
\mathbb{E} \left[
\left\lVert
\widehat{g}^{(t)} + \nabla_{\btheta}\log p_{\btheta^{(t)}} \left( \bY_{i(t)} \right)
\right\rVert ^2
\mid \btheta^{(t)}
\right]
\\
& \hspace{1.5em}+\BB{
\mathbb{E} \left[
\left\lVert
\nabla_{\btheta}\ell( \btheta^{(t)} ) + \nabla_{\btheta}\log p_{\btheta^{(t)}} \left( \bY_{i(t)} \right)
\right\rVert ^2
\mid \btheta^{(t)}
\right]} \\
& \hspace{1.5em}
-2\mathbb{E} \left[
\left\langle
\widehat{g}^{(t)} + \nabla_{\btheta}\log p_{\btheta^{(t)}} \left( \bY_{i(t)} \right),
\nabla_{\btheta}\ell( \btheta^{(t)} ) + \nabla_{\btheta}\log p_{\btheta^{(t)}} \left( \bY_{i(t)} \right)
\right\rangle
\mid \btheta^{(t)}
\right]
\end{align*}
%As $\widehat{g}^{(t)} = -\widehat{g}^N_{i(t)}\big(\btheta^{(t)}\big)$,
Note that the first term of the right hand side corresponds to $\overline{\sigma}_{\rm IS}^{(t)}$.
Let bound the two remaining terms.
According to Proposition \ref{prop:score}, for all $i = 1, \ldots, n$, the function
\begin{equation*}
\btheta \mapsto \nabla_{\btheta}\ell \left( \btheta \right) + \nabla_{\btheta}\log p_{\btheta} \left( \bY_{i} \right)
\end{equation*}
is continuous on $\Theta$, and thereby bounded. Denote
\begin{equation*}
\Delta = \max_{i = 1, \ldots, n} \sup_{\btheta\in\Theta} \left\lVert
\nabla_{\btheta}\ell \left( \btheta \right) + \nabla_{\btheta}\log p_{\btheta} \left( \bY_{i} \right)
 \right\rVert.
\end{equation*}
We have
\begin{equation*}
\mathbb{E} \left[
 \cp\lVert
\nabla_{\btheta}\ell\cp( \btheta^{(t)} \cp) + \nabla_{\btheta}\log p_{\btheta^{(t)}} \left( \bY_{i(t)} \right)
 \cp\rVert ^2
\mid \btheta^{(t)}
 \right]
\leq \Delta^2
\end{equation*}
Moreover, $i(t)$ being a uniform random variable on $\{1, \ldots, n\}$
independent of the sequence $\btheta^{(t)}$,
\BB{
\begin{align*}
& \mathbb{E} \left[
 \cp\langle
 \widehat{g}^{(t)} + \nabla_{i(t)}^{(t)},
 \nabla_{\btheta}\ell\cp( \btheta^{(t)} \cp) + \nabla_{i(t)}^{(t)}
 \cp\rangle
\mid \btheta^{(t)}
 \right]
\\
& =
\mathbb{E} \left[ \mathbb{E} \left[
 \cp\langle
 -\widehat{s}^{N}_{i(t)}\cp( \btheta^{(t)} \cp) + \nabla_{i(t)}^{(t)},
 \nabla_{\btheta}\ell\cp( \btheta^{(t)} \cp) + \nabla_{i(t)}^{(t)}
 \cp\rangle
\mid \btheta^{(t)}, i(t)
 \right]
 \right]
\\
& =
\mathbb{E} \left[
 \cp\langle
-\mathbb{E} \left[ \widehat{s}^{N}_{i(t)}\cp( \btheta^{(t)} \cp) \mid \btheta^{(t)}, i(t)
\right] + \nabla_{i(t)}^{(t)},
\nabla_{\btheta}\ell\cp( \btheta^{(t)} \cp) + \nabla_{i(t)}^{(t)}
 \cp\rangle
 \right]
\\
& =
-\frac{1}{n}\sum_{i = 1}^n
 \cp\langle
 \mathbb{E}_{\nu_{i}^{\otimes N}(\cdot\,;\btheta^{(t)})} \left[ \widehat{s}^{N}_{i}\cp( \btheta^{(t)} \cp) \right] - \nabla_{i}^{(t)},
 \nabla_{\btheta}\ell\cp( \btheta^{(t)} \cp) + \nabla_{i}^{(t)}
 \cp\rangle,
\end{align*}}
\BB{with $\nabla^{(t)}_i = \nabla_{\btheta}\log p_{\btheta^{(t)}} \left( \bY_{i} \right)$ for $i = 1,\cdots,n$}.
Providing Assumption \ref{hyp:proposal} holds, the Cauchy-Schwarz inequality
along with Proposition \ref{prop:agapiou} yields
\BB{
\begin{align*}
& -2\mathbb{E} \left[
 \cp\langle
 \widehat{g}^{(t)} + \nabla_{i(t)}^{(t)},
 \nabla_{\btheta}\ell\cp( \btheta^{(t)} \cp) + \nabla_{i(t)}^{(t)}
 \cp\rangle
\mid \btheta^{(t)}
 \right]
\\
& \leq
\frac{2}{n}\sum_{i = 1}^n
 \cp\lVert
\mathbb{E}_{\nu_{i}^{\otimes N}(\cdot\,;\btheta^{(t)})} \left[
\widehat{s}^{N}_{i}\cp( \btheta^{(t)} \cp) \right] - \nabla_{i}^{(t)}
 \cp\rVert
 \cp\lVert
 \nabla_{\btheta}\ell\cp( \btheta^{(t)} \cp) + \nabla_{i}^{(t)}
 \cp\rVert
\\
& \leq \frac{2\Delta\sqrt{d}}{N} M_{\xi}.
\end{align*}}
In conclusion, for any $t\in\mathbb{N}^{*}$, we have a finite upper bound for the mean squared error $\sigma^{(t)}$, namely
\begin{equation*}
\sigma^{(t)} \leq \frac{d}{N}M_{\sigma}  + \Delta^2 + \frac{2\Delta\sqrt{d}}{N} M_{\xi}
= \Delta^2 + \frac{d}{N} \left( M_{\sigma} + \frac{2\Delta}{\sqrt{d}}M_{\xi} \right).
\end{equation*}
Conversely, leveraging  Assumption \ref{hyp:proposal} once again, Proposition \ref{prop:agapiou} shows that
we have a finite upper bound for the bias $\xi^{(t)}$. Indeed
\begin{align*}
\xi^{(t)} & = \left\lVert
\frac{1}{n}\sum_{i = 1}^n
\mathbb{E} \left[
-\widehat{s}_{i(t)}^{N}
\mid \btheta^{(t)}, i(t) = i
 \right]
+ \frac{1}{n}\sum_{i = 1}^n \nabla_{\btheta} \log p_{\btheta^{(t)}}(\bY_i)
 \right\rVert
\\
& =
 \left\lVert
\frac{1}{n}\sum_{i = 1}^n
\mathbb{E} \left[
-\widehat{s}_{i(t)}^{N} + \nabla_{\btheta} \log p_{\btheta^{(t)}}(\bY_{i(t)})
\mid \btheta^{(t)}, i(t) = i
 \right]
 \right\rVert
\\
& \leq
\frac{1}{n}\sum_{i = 1}^n
 \left\lVert
\mathbb{E} \left[
-\widehat{s}_{i(t)}^{N} + \nabla_{\btheta} \log p_{\btheta^{(t)}}(\bY_{i(t)})
\mid \btheta^{(t)}, i(t) = i
 \right]
 \right\rVert
= \overline{\xi}_{\rm IS}^{(t)} \leq \frac{\sqrt{d}}{N}M_{\xi}.
\end{align*}
Consequently, Lemma \ref{lem:control} applies for any real constant $\eta \in \big(0, 1/\max\{2\Gamma, L\}\big]$, as
\begin{equation*}
\frac{\eta}{L\eta + 1} \leq \frac{1}{L + \max\{2\Gamma, L\}} = \frac{1}{\max\{2\Gamma + L, 2L\}}.
\end{equation*}
After integrating both sides of Equation \eqref{eq:lyapunov_biased} with respect to $\btheta^{(t)}$ and plugging in the aforementionned upper bounds, we get
\begin{dmath*}
\frac{\gamma}{2} \mathbb{E} \left[ \left\lVert \nabla F_{\frac{\eta}{L\eta + 1}}\cp( \btheta^{(t)} \cp) \right\rVert^2 \right]
\leq
\mathbb{E} \left[ F_{\frac{\eta}{L\eta + 1}} \cp( \btheta^{(t)} \cp) \right]
- \mathbb{E} \left[ F_{\frac{\eta}{L\eta + 1}} \cp( \btheta^{(t+1)} \cp) \right]
+ \frac{(\gamma + \gamma^2\Gamma)(L\eta + 1)\sqrt{d}}{\eta N} M_{\xi}
+ \frac{\gamma^2(L\eta + 1)}{2\eta}
 \left\lbrace \Delta^2 + \frac{d}{N} \left( M_{\sigma} + \frac{2\Delta}{\sqrt{d}}M_{\xi} \right) + \Gamma^2 \right\rbrace.
\end{dmath*}
Then, summing along the iterations $t = 1, \ldots, T$ yields
\begin{dmath*}
\sum_{t = 1}^T \mathbb{E} \left[ \left\lVert \nabla F_{\frac{\eta}{L\eta + 1}}\cp( \btheta^{(t)} \cp) \right\rVert^2 \right]
\leq
\frac{2}{\gamma} \left\lbrace F_{\frac{\eta}{L\eta + 1}} \cp( \btheta^{(1)} \cp)
- \mathbb{E} \left[ F_{\frac{\eta}{L\eta + 1}} \cp( \btheta^{(T+1)} \cp) \right] \right\rbrace
+ \frac{2(L\eta + 1)T\sqrt{d}}{\eta N} M_{\xi}
+ \frac{\gamma(L\eta + 1)T}{\eta}
 \left[ \Delta^2 + \frac{d}{N} \left\lbrace M_{\sigma} + \frac{2(\Delta + \Gamma)}{\sqrt{d}}M_{\xi} \right\rbrace + \Gamma^2 \right].
\end{dmath*}
For $\btheta^{\mle} = \argmin_{\btheta\in\Theta}\ell(\btheta)$, according
to the definition of the Moreau envelope, for any $\btheta\in\Theta$
\begin{equation*}
\ell(\btheta^{\mle}) \leq F_{\sfrac{\eta}{(L\eta + 1)}} \left( \btheta \right),
\end{equation*}
and thus,
\begin{dmath*}
\frac{1}{T}\sum_{t = 1}^T \mathbb{E} \left[ \left\lVert \nabla F_{\frac{\eta}{L\eta + 1}}\cp( \btheta^{(t)} \cp) \right\rVert^2 \right]
\leq
\frac{2}{\gamma T} \left\lbrace F_{\frac{\eta}{L\eta + 1}} \cp( \btheta^{(1)} \cp)
- \ell(\btheta^{\mle}) \right\rbrace
+ \frac{2(L\eta + 1)\sqrt{d}}{\eta N} M_{\xi}
+ \frac{\gamma(L\eta + 1)}{\eta} \left[ \Delta^2 + \frac{d}{N} \left\lbrace M_{\sigma} + \frac{2(\Delta + \Gamma)}{\sqrt{d}}M_{\xi} \right\rbrace + \Gamma^2 \right].
\end{dmath*}
Combining the last inequality with Equation \eqref{eqn:proof-thm-1} and using $F_{\nicefrac{\eta}{(L\eta + 1)}} ( \btheta^{(1)} ) \leq \ell(\btheta^{(1)})$ since $\btheta^{(1)}\in \Theta$, we get for $\gamma = \gamma_0 / \sqrt{T}$
\begin{dmath*}
\frac{1}{T}\sum_{t = 1}^T \mathbb{E} \left[ \cp\lVert G_{\eta}^{(t)} \cp\rVert^2 \right]
\leq
\frac{2\tau}{\gamma_0 (L\eta + 1) \sqrt{T}} \left\lbrace F_{\frac{\eta}{L\eta + 1}} \cp( \btheta^{(1)} \cp)
- \ell(\btheta^{\mle}) \right\rbrace
+ \frac{2\tau\sqrt{d}}{\eta N} M_{\xi}
+ \frac{\gamma_0\tau}{\eta\sqrt{T}} \left[ \Delta^2 + \frac{d}{N} \left\lbrace M_{\sigma} + \frac{2(\Delta + \Gamma)}{\sqrt{d}}M_{\xi} \right\rbrace + \Gamma^2 \right].
\end{dmath*}
\end{proof}
%------------------------------------------------------------------------------------------
%------------------------------------------------------------------------------------------

\sepline

%------------------------------------------------------------------------------------------
%------------------------------------------------------------------------------------------

\section{Results on the importance sampling proposal distribution}
\label{sec:app-proposal}

%------------------------------------------------------------------------------------------
%------------------------------------------------------------------------------------------
\lemmamixt*
%------------------------------------------------------------------------------------------
%------------------------------------------------------------------------------------------
\begin{proof}
Let $i \in \{1, \ldots, n\}$, $\alpha\in(0, 1]$ and $\delta > 1$.

\paragraph*{(A1.1)} Each component of a mixture distribution being a non-negative function, we have for any $\bV\in\mathbb{R}^q$ and any $\btheta\in\mathbb{R}^d$,
\begin{equation*}
\rho_{\btheta, i}(\bV) =  \frac{p_{\btheta}(\bY_i, \bV)}{\mathcal{GM}(\bV; \bmu_i(\btheta), \bS_i(\btheta), \alpha, \delta)}
\leq \frac{1}{\alpha} \frac{p_{\btheta}(\bY_i, \bV)}{\mathcal{N}(\bV; \bmu_i(\btheta), \delta\mathbf{I}_q)}.
\end{equation*}
On the compact set $\Theta$, there exist real constants $K_i^{\Theta} > 0$ and $\kappa_i^{\Theta}$ (Lemma \ref{lem:unif-bound}) such that
\begin{align*}
\log \frac{p_{\btheta}(\bY_i, \bV)}{\mathcal{N}(\bV; \bmu_i(\btheta), \delta\mathbf{I}_q)}
& \leq
K^\Theta_i \lVert \bV \rVert + \kappa_i^\Theta - \frac{1}{2} \lVert \bV \rVert^2
+ \frac{1}{2\delta} \left\lVert \bV - \bmu_i(\btheta) \right\rVert^2 + \frac{q}{2}\log(\delta).
%\\
%& \leq
% K_i^\Theta \lVert \bV \rVert + \kappa_i^\Theta - \frac{\delta - 1}{2\delta} \lVert \bV \rVert^2
%+ \frac{1}{2\delta} \left\lVert \bmu_i(\btheta) \right\rVert^2 - \frac{1}{\delta} \langle \bmu_i(\btheta), \bV\rangle + \frac{q}{2}\log(\delta).
\end{align*}
Using the Cauchy-Schwarz inequality, we get
\begin{equation*}
\left\lVert \bV - \bmu_i(\btheta) \right\rVert^2
= \left\lVert \bV \right\rVert^2 + \left\lVert \bmu_i(\btheta) \right\rVert^2 - 2\langle \bmu_i(\btheta), \bV\rangle
\leq  \left\lVert \bV \right\rVert^2 + \left\lVert \bmu_i(\btheta) \right\rVert^2 + 2 \lVert \bV \rVert \lVert \bmu_i(\btheta) \rVert.
\end{equation*}
$\btheta \mapsto \bmu_i(\btheta)$ is continuous on the compact set $\Theta$, and hence bounded, say by $u_\star$. It follows that
\begin{equation*}
\left\lVert \bV - \bmu_i(\btheta) \right\rVert^2
\leq  \left\lVert \bV \right\rVert^2 + u_\star^2 + 2 u_\star \lVert \bV \rVert .
\end{equation*}
Consequently, we can derive an upper bound independent of $\btheta$, namely
\begin{equation*}
\frac{p_{\btheta}(\bY_i, \bV)}{\mathcal{N}(\bV; \bmu_i(\btheta), \delta\mathbf{I}_q)}
\leq
\exp \left[
 \left( K_i^\Theta + \frac{u_\star}{\delta} \right) \lVert \bV \rVert
 - \frac{\delta - 1}{2\delta} \lVert \bV \rVert^2
 + \kappa_i^\Theta
+ \frac{u_\star^2}{2\delta} + \frac{q}{2}\log(\delta)
 \right].
\end{equation*}
Providing $\delta > 1$, the quadratic term in $\bV$ overweight any other terms and the supremum in $\bV$ is finite:
\begin{equation*}
\sup_{(\btheta, \bV) \in \Theta\times\mathbb{R}^q}
\rho_{\btheta, i} (\bV)
\leq
\frac{1}{\alpha} \sup_{(\btheta, \bV) \in \Theta\times\mathbb{R}^q}  \frac{p_{\btheta}(\bY_i, \bV)}{\mathcal{N}(\bV; \bmu_i(\btheta), \delta\mathbf{I}_q)} < \infty.
\end{equation*}
\paragraph*{(A1.2)} Following Equation \eqref{app:eqn-partial}, there is a unique $j$ in $\{1, \ldots, p\}$ such that
\begin{equation*}
 \left\lvert
\frac{\partial}{\partial \theta_r} \log p_{\btheta}(\bY_i, \bW_i)
 \right\rvert
\leq
 \left( \lVert \bx_i \rVert + \lVert \bW_i \rVert \right)
 \left\lbrace \lVert \bY_{i} \rVert + \exp(Z_{ij}) \right\rbrace.
\end{equation*}
We then have,
\begin{align*}
 \left\lvert
\frac{\partial}{\partial \theta_r} \log p_{\btheta}(\bY_i, \bW_i)
 \right\rvert^4
& \leq
\left(8\lVert \bx_i \rVert^4 + 8\lVert \bW_i \rVert^4 \right)
\left\lbrace 8\lVert \bY_{i} \rVert^4 + 8\exp(4Z_{ij}) \right\rbrace.
\\
& \leq
64 \lVert \bY_{i} \rVert^4 \left(
\lVert \bx_i \rVert^4
+ \lVert \bW_i \rVert^4 \right)
\\
& \hspace{1.5em}
+ \BB{64\exp(4 (\bB_{j}^\top\bx_{i} + o_{ij} + \bC_j^\top\bW_i)) \left\lbrace \lVert \bx_i \rVert^4
+ \lVert \bW_i \rVert^4
\right\rbrace,}
\end{align*}
where $\bB_j = (B_{1j}, \ldots, B_{pj})^\top$ and $\bC_j = (C_{j1}, \ldots, C_{jq})^\top$ stand for the $j$-th column and row of $\bB$ and $\bC$, respectively.

Let show that the upper bound admits a finite expectation with respect to a multivariate Gaussian distribution $\mathcal{N}(\mu, S)$, for any $\mu\in\mathbb{R}^q$ and $S\in\mathbb{S}^q_{++}$. To demonstrate this, we use the following identities. For any $\bw \in\mathbb{R}^q$, a straightforward rewriting yields
\begin{equation}
\label{app:eqn-id-gauss}
\mathcal{N}(\bw;\mu, S) \exp\left( 4 \bC_j^\top\bw \right)
= \mathcal{N}(\bw; \mu + 4 S\bC_j, S)\exp\left( 4\bC_j^\top \mu + 8 \bC_j^\top S \bC_j \right).
\end{equation}
Moreover, we have (see for instance \cite{simon2002probability})
\begin{equation*}
\mathbb{E}_{\mathcal{N}(\mathbf{0}_q, \mathbf{I}_q)}\left[
\left\lVert \bW_i \right\rVert^{4}
\right] \leq q(q + 2).
\end{equation*}
%Denote $L \in \mathcal{M}_{q\times q}(\mathbb{R})$ the lower triangular matrix given by the Cholesky decomposition of $S$, that is $S = L L^\top$.
Using that
\begin{align*}
\mathbb{E}_{\mathcal{N}(\mu, S)}\left[ \left\lVert \bW_i \right\rVert^4 \right]
& = \mathbb{E}_{\mathcal{N}(\mathbf{0}_q, \mathbf{I}_q)}\left[ \cp\lVert \mu + S^{\nicefrac{1}{2}}\bW_i \cp\rVert^4 \right] \\
& \leq 8 \left\lVert \mu \right\rVert^4 + 8 \cp\lVert S^{\nicefrac{1}{2}} \cp\rVert^4 \mathbb{E}_{\mathcal{N}(\mathbf{0}_q, \mathbf{I}_q)}\left[ \left\lVert \bW_i \right\rVert^4 \right],
\end{align*}
we get
\begin{equation}
\label{app:eqn-id-moment}
\mathbb{E}_{\mathcal{N}(\mu, S)}\left[ \left\lVert \bW_i \right\rVert^4 \right]
\leq 8 \left\lVert \mu \right\rVert^4 + 8q(q + 2) \cp\lVert S^{\nicefrac{1}{2}} \cp\rVert^4
\end{equation}
Equations \eqref{app:eqn-id-gauss} and \eqref{app:eqn-id-moment} thus lead to
\begin{align*}
\Phi_j(\btheta, \mu, S)
& = \exp\left( 4\bC_j^\top \mu + 8 \bC_j^\top S \bC_j \right)
\\
& \BB{= \mathbb{E}_{\mathcal{N}(\mu, S)}\left[
\exp\left( 4 \bC_j^\top\bW_i \right)
\right]},
\\
\Psi_j(\btheta, \mu, S)
& = \exp\left( 4\bC_j^\top \mu + 8 \bC_j^\top S \bC_j \right)
\left\lbrace 8 \left\lVert  \mu + 4 S\bC_j \right\rVert^4 + 8q(q + 2) \cp\lVert S^{\nicefrac{1}{2}} \cp\rVert^4  \right\rbrace
\\
& \geq \mathbb{E}_{\mathcal{N}(\mu, S)}\left[
\left\lVert \bW_i \right\rVert^4 \exp\left( 4 \bC_j^\top\bW_i \right)
\right].
\end{align*}
Therefore, for any $\mu\in\mathbb{R}^q$ and $S\in\mathbb{S}^q_{++}$
\begin{align*}
\mathbb{E}_{\mathcal{N}(\mu, S)}\left[
\left\lvert
\frac{\partial}{\partial \theta_r} \log p_{\btheta}(\bY_i, \bW_i)
\right\rvert^4
\right]
& \leq
64 \lVert \bY_{i} \rVert^4 \Bigg\lbrace
\lVert \bx_i \rVert^4
+8 \left\lVert \mu \right\rVert^4 \\
& \hspace{4.5em}\BB{ + 8q(q + 2) \cp\lVert S^{\nicefrac{1}{2}} \cp\rVert^4} \Bigg\rbrace
\\
&  \hspace{1.5em}
+ 64\exp(4 \bB_{j}^\top\bx_{i} + 4 o_{ij}) \lVert \bx_i \rVert^4 \Phi_j(\btheta, \mu, S)
\\
&  \hspace{1.5em}
+ 64\exp(4 \bB_{j}^\top\bx_{i} + 4 o_{ij}) \Psi_j(\btheta, \mu, S).
\end{align*}
If $\btheta \mapsto \bmu_i(\btheta)$ and $\btheta \mapsto \bS_i(\btheta)$ are
continuous on $\Theta$, the functions
\begin{align*}
\btheta & \mapsto  \bS_i(\btheta)^{\nicefrac{1}{2}},
\\
\btheta & \mapsto (1 - \alpha)\Phi_j(\btheta, \bmu_i(\btheta), \bS_i(\btheta)) + \alpha\Phi_j(\btheta, \bmu_i(\btheta), \delta\mathbf{I}_q),
\\
\btheta & \mapsto (1 - \alpha)\Psi_j(\btheta, \bmu_i(\btheta), \bS_i(\btheta)) + \alpha\Psi_j(\btheta, \bmu_i(\btheta), \delta\mathbf{I}_q),
\end{align*}
are also continuous on the compact set $\Theta$ (by a composition argument), and therefore bounded on $\Theta$, say by $c_\star$, $\phi_\star$ and $\psi_\star$ respectively.
On the other hand,
\begin{equation*}
\exp(4 \bB_{j}^\top\bx_{i} + 4 o_{ij})
\leq \exp(4 \lVert \bx_{i} \rVert \lVert \bB_{j}\rVert + 4 \lVert \bo_{i} \rVert)
\leq \exp(4 \lVert \bx_{i} \rVert \sup_{\btheta\in\Theta} \lVert \btheta \rVert + 4 \lVert \bo_{i} \rVert).
\end{equation*}
Consequently, with $u_\star$ the bound of $\btheta \mapsto \bmu_i(\btheta)$ on $\Theta$, for any $r\in\{1, \ldots, d\}$,
\begin{align*}
\mathbb{E}_{\nu_i(\cdot\,;\btheta)}\left[
\left\lvert
\frac{\partial}{\partial \theta_r} \log p_{\btheta}(\bY_i, \bW_i)
\right\rvert^4
\right]
& \leq
64 \lVert \bY_{i} \rVert^4
\Bigg[
\lVert \bx_i \rVert^4 + 8u_\star \\
& \hspace{1.5em} + \BB{8 q(q + 2) \left\lbrace (1-\alpha)c_\star^4 + \alpha\delta^4\right\rbrace}
\Bigg]
\\
&
+\BB{ 64\exp(4 \lVert \bx_{i} \rVert \sup_{\btheta\in\Theta} \lVert \btheta \rVert + 4 \lVert \bo_{i} \rVert)
\left( \lVert \bx_i \rVert^4 \phi_\star + \psi_\star \right)},
\end{align*}
which yields the conclusion.
%In conclusion,
%\begin{equation*}
%\sup_{\btheta\in\Theta} \mathbb{E}_{\nu_i(\cdot\,;\btheta)}\left[
%\left\lVert
%\nabla_{\btheta} \log p_{\btheta}(\bY_i, \bW_i)
%\right\rVert^4
%\right] < \infty.
%\end{equation*}
\end{proof}
%------------------------------------------------------------------------------------------
%------------------------------------------------------------------------------------------

%------------------------------------------------------------------------------------------
%------------------------------------------------------------------------------------------
\begin{lemma}
\label{lemma:continuity_moment}
When considering Model \eqref{eqn:PLNPCA}, for any individual $i = 1, \ldots, n$, and any $k\in\mathbb{N}^*$, the function
\begin{equation*}
\btheta \mapsto \int_{\mathbb{R}^q} \bw^k p_{\btheta}(\mathrm{d}\bw \mid \bY_i)
\end{equation*}
is continuous on $\mathbb R^d$.
\end{lemma}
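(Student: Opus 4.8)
The plan is to write the conditional moment as a ratio,
$$\int_{\mathbb{R}^q} \bw^k \, p_{\btheta}(\mathrm{d}\bw \mid \bY_i) = \frac{1}{p_{\btheta}(\bY_i)} \int_{\mathbb{R}^q} \bw^k \, p_{\btheta}(\bY_i \mid \bw)\, \mathcal{N}(\bw;\mathbf{0}_q,\mathbf{I}_q)\,\mathrm{d}\bw ,$$
with $\bw^k$ understood component-wise, and to prove that the numerator is continuous, the denominator is continuous and nowhere vanishing, so that the quotient is continuous.

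The denominator is already under control: by Proposition \ref{prop:score} the map $\btheta \mapsto p_{\btheta}(\bY_i)$ is continuous (indeed twice continuously differentiable) on $\mathbb{R}^d$, and its proof also shows it is strictly positive everywhere, since the integrand $\bw \mapsto p_{\btheta}(\bY_i\mid\bw)\mathcal{N}(\bw;\mathbf{0}_q,\mathbf{I}_q)$ is positive. Hence it suffices to show that the $\mathbb{R}^q$-valued numerator, say $J_i^k(\btheta)$, is continuous on $\mathbb{R}^d$; the quotient of a continuous vector-valued function by a continuous scalar function with no zero is then continuous.

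To establish continuity of $J_i^k$ at an arbitrary point $\btheta_0 \in \mathbb{R}^d$, I would fix a closed ball $\Theta$ centered at $\btheta_0$ (compact) and apply the continuity theorem for integrals depending on a parameter, componentwise. Two ingredients are required. First, for each fixed $\bw$ the integrand $\btheta \mapsto \bw^k\, p_{\btheta}(\bY_i\mid\bw)\,\mathcal{N}(\bw;\mathbf{0}_q,\mathbf{I}_q)$ is continuous in $\btheta$: writing $p_{\btheta}(\bY_i\mid\bw) = \prod_{j=1}^p p(Y_{ij}\mid Z_{ij})$ with $(Z_{i1},\dots,Z_{ip})^\top = f_i(\bw;\bB,\bC)$, this holds because $f_i$ is affine in $(\bB,\bC)$ and each Poisson factor is smooth in its argument. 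Second, a $\btheta$-uniform integrable dominating function on $\Theta$: by Lemma \ref{lem:unif-bound} there exist constants $K_i^{\Theta} > 0$ and $\kappa_i^{\Theta} \in \mathbb{R}$ with $\log p_{\btheta}(\bY_i\mid\bw) \le K_i^{\Theta}\lVert\bw\rVert + \kappa_i^{\Theta}$ for all $\btheta\in\Theta$ and $\bw\in\mathbb{R}^q$, and since each coordinate of $\bw^k$ has absolute value at most $\lVert\bw\rVert^k$, the integrand is dominated on $\Theta$ by $h(\bw) = \sqrt{q}\,(2\pi)^{-q/2} e^{\kappa_i^{\Theta}}\, \lVert\bw\rVert^k \exp\!\bigl(K_i^{\Theta}\lVert\bw\rVert - \tfrac12\lVert\bw\rVert^2\bigr)$, which is Lebesgue integrable on $\mathbb{R}^q$ because the Gaussian factor overwhelms the polynomial and exponential factors. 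The continuity theorem then gives continuity of $J_i^k$ on $\Theta$, hence at $\btheta_0$; since $\btheta_0$ was arbitrary, $J_i^k$ — and therefore the conditional moment — is continuous on $\mathbb{R}^d$.

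There is no genuine obstacle here: the argument is a routine continuity-under-the-integral-sign computation. The only mildly delicate point is producing a dominating function valid uniformly over a whole neighbourhood of $\btheta_0$ rather than pointwise, and this is supplied directly by the compact-set-uniform bound of Lemma \ref{lem:unif-bound}; the positivity and regularity of $\btheta \mapsto p_{\btheta}(\bY_i)$ needed for the denominator are likewise already in hand from Proposition \ref{prop:score}.
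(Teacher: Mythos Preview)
Your proposal is correct and follows essentially the same route as the paper: Bayes' rule to split into numerator and denominator, continuity and positivity of $p_{\btheta}(\bY_i)$ from Proposition~\ref{prop:score}, and dominated convergence with the uniform bound from Lemma~\ref{lem:unif-bound} on a compact neighbourhood. The only cosmetic difference is that the paper keeps the factor $1/p_{\btheta}(\bY_i)$ inside the integrand (bounding it by $1/\zeta_i$ via Proposition~\ref{prop:smoothness}) whereas you pull it outside before applying dominated convergence, which is arguably slightly cleaner.
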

%------------------------------------------------------------------------------------------
%------------------------------------------------------------------------------------------
\begin{proof}
Let $i \in \{1, \ldots, n\}$, $k\in\mathbb{N}^*$, and $\Theta \subset \mathbb{R}^d$ a non-empty bounded and open set.
For any $\bw\in\mathbb{R}^q$ and $\btheta\in\Theta$, the Bayes rule yields
\begin{equation*}
p_{\btheta}(\bw \mid \bY_i) = \frac{p_{\btheta}(\bY_i \mid \bw_i)\mathcal{N} \left(\bw;\boldsymbol{0}_q, \boldsymbol{I}_q \right)}{p_{\btheta}(\bY_i)}.
\end{equation*}
Given $\bw\in\mathbb{R}^q$, $\btheta \mapsto p_{\btheta}(\bY_i \mid \bW)$ is continuous on $\Theta$, and $\btheta \mapsto p_{\btheta}(\bY_i)$ is
continuous and positive on $\Theta$ (see Proposition \ref{prop:score} and its proof). This proves the continuity of $\btheta \mapsto \bw^k p_{\btheta}(\bw \mid \bY_i)$ on $\Theta$.

Additionally, Lemma \ref{lem:unif-bound} and Proposition \ref{prop:smoothness} state that there are real constants $K_i^{\Theta} > 0$, $\kappa_i^{\Theta}$, and $\zeta_i > 0$, such that for any $\btheta\in\Theta$
\begin{equation*}
\left\lVert \bw^k {p_{\btheta}(\bw \mid \bY_i)} \right\rVert
\leq
\frac{\lVert \bw \rVert^k}{\zeta_i}
\exp \left\{
K_i^{\Theta} \lVert \bw \rVert - \frac{1}{2} \lVert \bw \rVert^2 + \kappa_i^\Theta - \frac{q}{2}\log(2\pi)
 \right\}.
\end{equation*}
The upper bound is Lebesgue integrable on $\mathbb{R}^q$ and independent of
$\btheta\in\Theta$. Consequently, the dominated convergence theorem yields the continuity on any non-empty bounded and open set.

\end{proof}

%------------------------------------------------------------------------------------------
%------------------------------------------------------------------------------------------

% \section{Simulation study}
% \label{sec:app-experiments}

\end{appendix}

\end{document}